\documentclass[11pt]{amsart}

\usepackage{amsmath,amssymb,amsthm}
\usepackage[T1]{fontenc}
\usepackage{mathrsfs}
\usepackage{tikz-cd}
\usepackage[hidelinks]{hyperref}
\usepackage {oplotsymbl}

\newcommand{\var}{\operatorname{var}}
\newcommand{\can}{\operatorname{can}}

\newcommand{\M}[1]{\mathcal{M}_{0, #1}}
\newcommand{\Ms}[1]{\overline{\mathcal{M}}_{0, #1}}
\newcommand{\Q}{\mathbb Q}
\newcommand{\C}{\mathbb C}

\newcommand{\rev}{\operatorname{rev}}
\newcommand{\jjj}[1]{j_{\fontsize{5}{4}\selectfont\pentago \starletfill} #1}

\theoremstyle{plain}
\newtheorem{prop}{Proposition}
\newtheorem{theorem}{Theorem}
\newtheorem{lemma}{Lemma}

\newtheorem{cor}{Corollary}

\newtheorem*{theorem*}{Theorem}
\newtheorem*{cor*}{Corollary}

\theoremstyle{definition}
\newtheorem{definition}{Definition}

\theoremstyle{remark}
\newtheorem{rem}{Remark}

\begin{document}
\title{Pentagram relations}
\author{Nikita Markarian}

\begin{abstract}
We study the cohomology of iterated Kummer extensions on the
moduli space of five-pointed rational curves, continuing
\cite{MarkarianHodge,MarkarianDefect}.
A two-dimensional shrinking construction compares the Betti
presentations obtained from its five forgetful projections.
The corresponding Hodge defects give pentagram relations,
quadratic relations between coefficients of the Drinfeld
associator, which are equivalent to the pentagon equation
under group-likeness, normalization, and reversal.
Under reversal and vanishing of pure-power coefficients, the
relations imply shuffle and regularized double shuffle,
whose Hodge realization is recovered by a convolution specialization.
\end{abstract}

\email{nikita.markarian@gmail.com}

\date{}

\address{UMR 7501, Université de Strasbourg,
7 rue René Descartes,
67084 Strasbourg Cedex, France}
\maketitle

\section*{Introduction}

This paper continues the study of Hodge comparison maps
begun in \cite{MarkarianHodge,MarkarianDefect}. In those
papers, the comparison between topological descriptions
of cohomology and Hodge filtrations is expressed through
the coefficients of the Drinfeld associator. Here we
study the compatibility of these comparisons when the
same cohomology is calculated by different projections.

In \cite{MarkarianDefect}, regularized double shuffle
relations arise from a Hodge comparison associated with
multiplicative convolution. We generalize this approach
by considering an arbitrary iterated Kummer variation on
$\M{5}$ and the five forgetful projections to $\M{4}$.
We extend this variation
to the compactification by $*$ and $!$ along boundary
divisors, according to a fixed cyclic order. Each projection
gives a calculation of the resulting second cohomology.
The key geometric fact is
that spreading a corner value over the real pentagon
gives the same Betti class for either adjacent projection.
This provides the common Betti maps needed to compare
the Hodge calculations.

The resulting commutative diagram
\eqref{eq:pentagram-diagram} gives relations between
associator coefficients, which we call the
\emph{pentagram relations}. We prove that these relations
are equivalent to the pentagon equation for normalized
group-like series satisfying reversal
(Theorem~\ref{thm:pentagon-pentagram}).
Group-likeness itself follows from the pentagram equations
under reversal and the normalization that all pure-power
coefficients vanish (Subsection~\ref{subsec:shuffle}).

The pentagram formulation expresses compatibility of
Hodge comparison maps by equations of degree at most two
in associator coefficients. For the period associator,
these are quadratic relations between multiple zeta values.
They admit an explicit presentation as the Maurer--Cartan
equation of a complete differential graded Lie algebra
(Appendix~\ref{app:maurer-cartan}).
The implication from pentagram to pentagon, together with
\cite[Theorem~1.1]{FurushoDoubleShuffle}, gives regularized
double shuffle for every such formal series. For the period
associator, the convolution specialization recovers the Hodge
comparison of \cite{MarkarianDefect}.

In \cite{MarkarianHodge} we suggested interpreting
associator coefficients as higher terms of a cohomological
functor on an $A_\infty$ category of Hodge variations on
$\M{4}$. From this perspective, pushforwards from $\M{5}$
and their compatibility with pushforward to a point should
account for relations between these coefficients.
The present construction gives a cohomological instance
of this compatibility. The equivalence with pentagon
gives a concrete illustration of the philosophy of
Grothendieck's \emph{Esquisse d'un programme}
\cite{GrothendieckEsquisse}: the Hodge comparison relations
studied here are governed by the pentagon relation in
the genus-zero Teichm\"uller tower.
We also expect an analogous
construction for iterated extensions of Tate motives.
The Maurer--Cartan presentation raises the question of its
relation to the motivic Galois group of mixed Tate motives
over $\mathbb Z$.

Sections~\ref{sec:cohomology-functor}--\ref{sec:betti-presentations}
construct the common Betti presentation,
Section~\ref{sec:pentagram-relations} derives the relations,
and Section~\ref{sec:consequences-specializations} treats
pentagon, convolution, and shuffle. Appendix~\ref{app:betti-comparison}
calculates the second column of the Betti comparison;
Appendix~\ref{app:logarithmic-calculations} gives the logarithmic
and frame calculations, and
Appendix~\ref{app:coefficient-calculations} gives the Maurer--Cartan
presentation and the coefficient calculations through weight five.

\medskip
\noindent\textit{Acknowledgements.}
I would like to thank IHES for hospitality and excellent
working conditions.
This project was supported by the PAUSE program and the ITI IRMIA++.

\section{Mixed extensions and forgetful projections}
\label{sec:cohomology-functor}

\subsection{Iterated Kummer extensions}
\label{subsec:iterated-kummer}

Put
\[
U=\M{5},\qquad S=\Ms{5}.
\]
\begin{definition}
\label{def:iterated-kummer}
Following \cite[Section~1]{MarkarianHodge}, consider
admissible unipotent variations $\mathbb V$ of mixed
Hodge--Tate structures on $U$ with constant Tate graded
pieces. Our weight convention is
\[
\operatorname{gr}_{-2n}^W\mathbb V
 \simeq\mathbb Q_U(n)^{\oplus r_n},
\qquad
\operatorname{gr}_{2n+1}^W\mathbb V=0.
\]
Write $V$ for the underlying rational local system.

The basic Kummer extensions considered in this paper are
attached to rational functions $g$ on $S$ whose zeros and
poles are simple and lie in $S\setminus U$:
\[
0\longrightarrow\mathbb Q_U(1)
 \longrightarrow K_g
 \longrightarrow\mathbb Q_U(0)
 \longrightarrow0.
\]
We consider finite iterated extensions with these Kummer classes
and their Tate twists as logarithmic connection data: in the
Hodge grading,
\[
 E=\bigoplus_n E_n,\qquad
 \nabla=d-\sum_\alpha N_\alpha\,d\log g_\alpha,
 \qquad N_\alpha E_n\subset E_{n+1},\qquad \nabla^2=0,
\]
where $E_n$ has Tate type $\Q(n)$ and the functions $g_\alpha$
are as above. The admissible rational Betti realization and
its comparison with this filtered flat bundle are part of the data.
\end{definition}

\begin{rem}
The de Rham class of $K_g$ is $d\log g$. On
$\M{4}=\mathbb P^1\setminus\{0,1,\infty\}$ we retain
the convention of \cite{MarkarianHodge}: the two basic
classes are $dz/z$ and $dz/(1-z)$, so the latter is the
negative of the standard Kummer class of $1-z$.
\end{rem}

Pullback along a forgetful map and tensor product preserve this
class, which therefore contains the tensor products of pullbacks
used in \cite{MarkarianDefect}. The Betti constructions apply to any
finite-rank unipotent rational local system $V$; the Hodge
calculations retain the variation $\mathbb V$.

\subsection{Shrinking}
\label{subsec:shrinking}

All sheaf-theoretic functors are derived. As in
\cite{MarkarianConvolution}, nearby and vanishing cycles
are denoted by $\psi_f$ and $\phi_f$.
Their perverse normalizations are $\psi_f[-1]$ and
$\phi_f[-1]$. When the support is a point, put
\[
\Psi_f(\mathcal F)
 =H^0\bigl(\psi_f(\mathcal F)[-1]\bigr),
\qquad
\Phi_f(\mathcal F)
 =H^0\bigl(\phi_f(\mathcal F)[-1]\bigr).
\]
We use the classical Betti variation
\[
\var:\Phi_f(\mathcal F)
   \longrightarrow\Psi_f(\mathcal F),
\qquad
\var\circ\can=T_f-1.
\]
The logarithmic variation in the Hodge realization is
denoted by $\operatorname{Var}$.

Let
\[
j_0:\mathbb C^*\hookrightarrow\mathbb A^1,
\qquad
j_\infty:\mathbb A^1\hookrightarrow\mathbb P^1
\]
be the inclusions.

\begin{prop}[Shrinking]
Let $\mathcal M$ be a perverse sheaf on $\mathbb C^*$
smooth outside $1$. The interval $I=(0,1]$ gives a natural isomorphism
\[
\varphi_I:\Phi_{1-t}(\mathcal M)
   \xrightarrow{\sim}H^0\bigl(\mathbb P^1,j_{\infty!}j_{0*}\mathcal M\bigr).
\]
Moreover,
\[
H^k\bigl(\mathbb P^1,j_{\infty!}j_{0*}\mathcal M\bigr)=0
\qquad(k\ne0).
\]
Both functors are exact.
\end{prop}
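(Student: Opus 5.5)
The plan is to reduce the global cohomology on $\mathbb P^1$ to a computation along the real segment $[0,1]$, by excision of the complement of that segment. Put $\mathcal F=j_{0*}\mathcal M$, a complex on $\mathbb A^1$; then
\[
H^k\bigl(\mathbb P^1,j_{\infty!}j_{0*}\mathcal M\bigr)=H^k_c(\mathbb A^1,\mathcal F).
\]
Let $Z=[0,1]\subset\mathbb A^1$ and $V=\mathbb A^1\setminus Z$. The excision triangle is
\[
R\Gamma_c(V,\mathcal F)\to R\Gamma_c(\mathbb A^1,\mathcal F)\to R\Gamma(Z,\mathcal F|_Z)\xrightarrow{+1}.
\]
On $V$, $\mathcal F$ is a shifted local system $L[1]$, because $\mathcal M$ is smooth outside $1$. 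Also $V$ is homeomorphic to $S^1\times\mathbb R$. By Künneth, $R\Gamma_c(S^1\times\mathbb R,L)=R\Gamma(S^1,L)\otimes R\Gamma_c(\mathbb R)$ with the local system pulled back from $S^1$, and $R\Gamma_c(\mathbb R)$ is the pure shift $\mathbb Q[-1]$. That alone does not force vanishing; the vanishing I actually need comes from the boundary structure of $V$. The closure of $V$ in $\mathbb P^1$ meets $Z$ along a circle, and $\infty$ has compact supports, so one end of $S^1\times\mathbb R$ is open and the other is closed. I will therefore take the version where $V$ is enlarged by a small neighbourhood: replace $Z$ by a thin closed tube $N$ around $[0,1]$, and use that $V'=\mathbb A^1\setminus N$ satisfies $R\Gamma_c(V',L)=0$. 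Concretely, $V'$ is the region $\{|t|\ge R\}\cup(\text{annular shell})$ with compact support toward $\infty$ only, i.e. a half-closed annulus, and $R\Gamma_c([a,\infty)\times S^1,L)=0$.

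This yields
\[
R\Gamma_c(\mathbb A^1,\mathcal F)\simeq R\Gamma(N,\mathcal F)=R\Gamma(N\setminus\{0\},\mathcal M),
\]
where the last equality holds because $\mathcal F=j_{0*}\mathcal M$. Next I will decompose the tube $N\setminus\{0\}$. It is a small disc $B$ around $1$ glued along a half-annulus to a tube $N'$ around $(0,1-\epsilon]$ with $0$ removed, and $N'$ retracts onto a punctured disc around $0$ attached along a segment. Since $\mathcal M$ is smooth on $N'$, the restriction $R\Gamma(N\setminus\{0\},\mathcal M)\to R\Gamma(B\cap N, \ldots)$ should identify the whole complex with the relative cohomology $R\Gamma(B, B\cap\{\operatorname{Re}(1-t)<0\};\mathcal M)$ up to the contribution of the segment. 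The point is that the loop around $0$ is cancelled by the boundary circle being compactly supported toward $\infty$. The relative complex is the standard description of $\phi_{1-t}(\mathcal M)$ at the point $1$, with the half-disc playing the role of the Milnor fibre. In it, the interval $I=(0,1]$ is the path joining the Milnor fibre to the basepoint, and this defines $\varphi_I$. Since $\phi_{1-t}[-1]$ is $t$-exact for perverse sheaves and its value is supported at a point, it is concentrated in degree $0$. This gives both the isomorphism $\varphi_I$ and the vanishing for $k\neq0$.

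Exactness of both functors then follows. $\mathcal M\mapsto\Phi_{1-t}(\mathcal M)$ is exact by $t$-exactness of perverse vanishing cycles, and the second functor is isomorphic to it naturally. Alternatively, a long exact sequence argument on the full complexes works, since all higher cohomology vanishes.

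The main obstacle is the local bookkeeping near $0$ and near $1$. I must make sure that the $j_{0*}$ extension at $0$, together with compact support at $\infty$, exactly kills the monodromy contribution around $0$, so that only the vanishing-cycle piece at $1$ survives, with the correct shift. I would first check this on the cases $\mathcal M=L[1]$ for a rank-one local system, $\mathcal M=\delta_1$, and $\mathcal M=j_{!}L[1]$, where both sides can be computed by hand. I would then conclude by dévissage via exact triangles, using that both sides are triangulated functors that agree on generators, naturally in $\mathcal M$.
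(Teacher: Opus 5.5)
There is a genuine gap at the step $R\Gamma_c(\mathbb A^1,\mathcal F)\simeq R\Gamma(N,\mathcal F)$, which is false. Take $\mathcal M=\mathbb Q[1]$. Then $\Phi_{1-t}(\mathcal M)=0$, and by duality $R\Gamma_c(\mathbb A^1,j_{0*}\mathbb Q[1])$ is dual to $R\Gamma(\mathbb A^1,\{0\};\mathbb Q)[1]=0$. On the other hand, $R\Gamma(N\setminus\{0\},\mathbb Q[1])$ has $\mathbb Q$ in degrees $-1$ and $0$.

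The tube does not repair the problem you correctly found with $V$. For a closed tube $N$, the complement $\mathbb A^1\setminus N$ is again an open annulus, with $R\Gamma_c=R\Gamma(S^1,L)[-1]\ne0$. Moreover, for a thin tube $R\Gamma(N,\mathcal F)$ agrees with $R\Gamma(Z,\mathcal F|_Z)$, so nothing has changed. The vanishing $R\Gamma_c([a,\infty)\times S^1,L)=0$ is true for the closed half-annulus. But plugged into the restriction triangle it gives $R\Gamma_c(N^\circ,\mathcal F)$, compact supports on the open tube, not ordinary cohomology of $N$.

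Everything afterwards rests on this step and is only asserted: the identification with the relative Milnor-fibre complex at $1$, the cancellation of the loop around $0$, and the definition of $\varphi_I$ itself. The dévissage fallback cannot close the gap, because it needs a natural map already constructed. It would also need $j_!L[1]$ for local systems $L$ of arbitrary rank on $\mathbb C\setminus\{0,1\}$; rank-one systems do not generate in general.

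The missing idea is to use supports ($i^!$) rather than restriction ($i^*$), which is what the paper does.
\begin{itemize}
\item The supports-on-a-ray model gives $\Phi_{1-t}(\mathcal M)=H^0_{(0,1]}(\mathbb C^*,\mathcal M)$. Along $(0,1)$ the sheaf is smooth and the interval runs into the puncture, so this localizes at $1$. Concentration in degree $0$ then follows from $t$-exactness of $\phi_{1-t}[-1]$.
\item The costalk of $j_{0*}\mathcal M$ at $0$ vanishes, so this group equals $H^0_{[0,1]}(\mathbb A^1,j_{0*}\mathcal M)$.
\item Forgetting supports maps $R\Gamma_{[0,1]}(\mathbb A^1,j_{0*}\mathcal M)$ to $R\Gamma(\mathbb P^1,j_{\infty!}j_{0*}\mathcal M)$. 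The cone is $R\Gamma(\mathbb P^1\setminus[0,1],j_{\infty!}(L[1]))$. This is the cohomology of an open disk with coefficients $!$-extended at its centre from a local system on the punctured disk, and it vanishes.
\end{itemize}
This last vanishing is the half-closed-annulus statement you were aiming for, placed in the correct triangle. It defines $\varphi_I$ as the forget-support map and proves the isomorphism and the vanishing for $k\ne0$ at once. Exactness of both functors is then immediate from the long exact sequence.
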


\begin{proof}
The supported-cohomology description of vanishing cycles
identifies
\[
\Phi_{1-t}(\mathcal M)
 =H^0_{(0,1]}(\mathbb C^*,\mathcal M).
\]
Forgetting support gives the shrinking isomorphism of
\cite[Section~1]{MarkarianConvolution}. Cohomological
concentration and the long exact sequence give exactness.
\end{proof}

We use the same notation on any three-pointed projective line
with a chosen coordinate. Subscripts record the extension
conditions at $0,1,\infty$: thus $j_{**!}$ means $*$ at
$0,1$ and $!$ at $\infty$, and similarly for $j_{*!!}$.

\begin{rem}
For the Hodge--Tate objects considered in
\cite{MarkarianHodge,MarkarianDefect}, the spaces on both
sides carry mixed Hodge structures and $\varphi_I$ is
compatible with weights. Its comparison with the Hodge
realization is the Hodge defect of shrinking studied in these papers.
\end{rem}

\subsection{The mixed extension and its pushforward}
\label{subsec:mixed-extension}

The boundary divisors are indexed by unordered pairs of
markings. Fix
\[
\begin{aligned}
D_*&=D_{12}\cup D_{23}\cup D_{34}\cup D_{45}\cup D_{15},\\
D_!&=D_{13}\cup D_{24}\cup D_{35}\cup D_{14}\cup D_{25}.
\end{aligned}
\]
Let $V$ be a finite-rank unipotent rational local system on $U$.

\begin{definition}
\label{def:mixed-extension}
Consider the inclusions
\[
U\xrightarrow{\,a\,}S\setminus D_*
 \xrightarrow{\,b\,}S
\]
and put
\[
\jjj{V}=b_*a_!V.
\]
This is the mixed extension with $*$ along $D_*$ and $!$
along $D_!$. For a variation $\mathbb V$, its cohomology
$H^2(S,\jjj{V})$ underlies the mixed Hodge structure
$H^2(S,\jjj{\mathbb V})$.
\end{definition}

\begin{lemma}
\label{lem:mixed-extension-locality}
The complex $\jjj{V}[2]$ is perverse. Its ordinary
restriction to every $!$-component is zero, including at its
intersections with $*$-components. Verdier duality
exchanges the two boundary conditions.
\end{lemma}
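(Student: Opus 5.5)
The plan is to reduce all three assertions to a local computation near each boundary point of $S=\Ms{5}$, using that $S\setminus U$ is a simple normal crossings divisor. First I record the combinatorics of the chosen splitting. Two boundary divisors $D_{ij}$ and $D_{kl}$ meet if and only if $\{i,j\}\cap\{k,l\}=\emptyset$. The pairs in $D_*$ are the edges of the pentagon $12345$, and the pairs in $D_!$ are its diagonals. A disjoint pair of edges, for example $\{12\},\{34\}$, corresponds to a point lying on two $*$-components. A disjoint pair of diagonals, for example $\{13\},\{24\}$, gives two $!$-components. A disjoint edge–diagonal pair, for example $\{12\},\{35\}$, gives a mixed point. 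All three types of intersection point occur. Since there are no triple intersections, the local model at any boundary point is $(\Delta^*)^k\times\Delta^{2-k}$ with $k\le 2$, and along each local branch the condition is either $*$ or $!$.

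Second, I prove the local product formula. Near a point with local coordinates $x,y$ and $V$ unipotent, work on the polydisc $\Delta^2$. Let $\iota_\star$ be the extension from $\Delta^*$ to $\Delta$, with $\star\in\{*,!\}$. I claim
\[
b_*a_!V\simeq \iota_{\star_x}\boxtimes\iota_{\star_y}(\text{local system}),
\]
at least after passing to a unipotent model: locally $V$ is a successive extension of constant sheaves, and both sides are triangulated functors, so it suffices to treat $V=\Q$. For $\Q$ on $(\Delta^*)^2$, factor $a$ as the open inclusion $\Delta^*\times\Delta^*\to\Delta^*_{*\text{-branch}}\times\Delta_{!\text{-branch}}$. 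Then $a_!$ is the exterior product of $j_!$ in the $!$ variable with the identity, and $b_*$ is $j_*$ in the $*$ variable. The Künneth formula for $!$ and $*$ pushforwards along product open embeddings gives the claim. The same computation covers the pure $*/*$ and $!/!$ points, where it is standard that $b_*a_!=j_*$ or $j_!$ respectively.

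With the product formula, the three assertions follow.

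\emph{Perversity.} Each one-variable extension $\iota_\star\Q[1]$ is perverse on $\Delta$, and exterior products of perverse sheaves are perverse. Alternatively, $a_!$ and $b_*$ are both $t$-exact for quasi-finite affine open embeddings, by Artin vanishing. Here $a$ is the complement of a Cartier divisor in $S\setminus D_*$, hence affine, and so is $b$. Either route gives that $\jjj{V}[2]$ is perverse.

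\emph{Vanishing on $!$-components.} At a point of a $!$-branch $y=0$, the stalk of $\iota_!$ in the $y$ factor vanishes. So the stalk of the product vanishes, including at mixed points $x=y=0$ where $x$ is a $*$-branch. This is exactly the clause about intersections with $*$-components.

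\emph{Duality.} Verdier duality commutes with exterior products, and it exchanges $\iota_!$ and $\iota_*$ up to replacing the local system by its dual, with the shift absorbed by the $[2]$ normalization. Therefore $\mathbb D(\jjj{V}[2])$ is the mixed extension of $V^\vee[2]$ with the roles of $D_*$ and $D_!$ swapped. Globally, $\mathbb D(b_*a_!V[2])=b_!a_*\mathbb D(V[2])$, and this matches the local description.

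I expect the main obstacle to be the mixed points. There one must check that the local product formula really holds for a nonconstant unipotent $V$, whose two local monodromies commute but are not trivial. The exterior-product argument is cleanest for constant coefficients. For general unipotent $V$, I would use the dévissage along the weight, or unipotent, filtration, whose graded pieces are constant, together with the five lemma on stalks. For the vanishing statement this suffices directly: an extension of zero stalks is zero.
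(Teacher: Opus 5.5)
Your proposal is correct and follows the paper's proof. The paper also reduces the restriction and duality assertions to the commutation, at a normal crossing, of the derived $*$- and $!$-extensions in the two independent normal directions, and it deduces perversity from the affineness of $a$ and $b$, which is your second route. You spell out the local product formula and the d\'evissage for nonconstant unipotent $V$, both of which the paper leaves implicit; for nonconstant $V$ that step is best stated as the natural base-change map $b'_!a'_*V\to b_*a_!V$ between the two orders of extension, which is an isomorphism for $V=\Q$ and hence for all unipotent $V$ by the five lemma.
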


\begin{proof}
At a normal crossing, the derived $*$- and $!$-extensions
in the two independent normal directions commute. This
gives the restriction and duality assertions. The open
embeddings $a$ and $b$ are affine, so $a_!$ and $b_*$
preserve perversity.
\end{proof}

Let
\[
p_i:S\longrightarrow\Ms{4}
\]
forget the $i$-th marking and stabilize. We first consider
$p=p_3$. Write the five markings as
\[
(0,\infty,1,x,y)
\]
and use the target coordinate
\[
t=1-y/x.
\]
The boundary fibers and their extension conditions are
\[
\begin{array}{c|c|c}
t&\text{fiber}&\text{conditions}\\ \hline
0&D_{12}\cup D_{45}&*,*\\
1&D_{15}\cup D_{24}&*,!\\
\infty&D_{14}\cup D_{25}&!,!
\end{array}
\]

Put
\[
\mathcal F=p_*\jjj{V}[2].
\]

\begin{prop}[The forgetful pushforward and shrinking]
\label{prop:forgetful-pushforward}
The complex $\mathcal F$ is perverse. It is the $*$-extension
at $0$ and the $!$-extension at $\infty$ of its restriction
to $\mathbb C^*$:
\[
\mathcal F\simeq
j_{\infty!}j_{0*}(\mathcal F|_{\mathbb C^*}).
\]
Its restriction to $\M{4}$ is a local system of rank
$2\operatorname{rank}V$, shifted by $[1]$. Consequently,
shrinking gives a natural isomorphism
\[
\varphi_I:\Phi_{1-t}(\mathcal F)
 \xrightarrow{\sim}H^0(\mathbb P^1,\mathcal F)
 \xrightarrow{\sim}H^2(S,\jjj{V}).
\]
\end{prop}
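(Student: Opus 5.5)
We prove the four assertions in turn: perversity, the extension conditions at $0$ and $\infty$, the rank over $\M{4}$, and the shrinking isomorphism. The last one follows formally from the first three.

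\emph{Perversity.} The map $p=p_3:S\to\Ms{4}$ is the universal curve, so it is proper and flat of relative dimension one. Hence $p_*=p_!$, and by decomposition-type arguments $p_*$ only shifts perverse degrees within $[-1,1]$. To get exactness we instead check directly that the perverse cohomology in degrees $\pm1$ vanishes, as follows.
\begin{itemize}
\item Over $\M{4}$ the map $p$ is a family of $\mathbb P^1$'s. On a fiber with marked points $0,\infty,1,x,y$ (with $3$ forgotten), the restriction of $\jjj{V}$ is a mixed extension of a unipotent local system on the fiber minus four points. The divisors meeting a generic fiber are $D_{13},D_{23},D_{34},D_{35}$; their conditions are $!,*,*,!$, so the fiber sees two $*$-points and two $!$-points.
\item On a punctured $\mathbb P^1$ with at least one $*$-point and at least one $!$-point, a unipotent local system has cohomology concentrated in degree $1$. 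Indeed, $H^0$ vanishes because of the $!$-point, and $H^2$ vanishes dually because of the $*$-point.
\item The Euler characteristic is $-(4-2)\operatorname{rank}V$, so $R^1$ has rank $2\operatorname{rank}V$. By proper base change this gives the local system on $\M{4}$ placed in a single degree, shifted as stated.
\end{itemize}

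\emph{Extension conditions at the boundary.} We compute stalks at $t=0$ and $t=\infty$ by proper base change, using the fiber table together with Lemma~\ref{lem:mixed-extension-locality}.
\begin{itemize}
\item Over $t=\infty$ the fiber is $D_{14}\cup D_{25}$, and both components are $!$-components. The ordinary restriction of $\jjj{V}$ to them is zero by Lemma~\ref{lem:mixed-extension-locality}, so the stalk $\mathcal F_\infty$ vanishes. This is exactly the condition for a $!$-extension at $\infty$.
\item At $t=0$ both components are $*$. Here we apply Verdier duality: $\mathbb D\,\jjj{V}$ is the mixed extension of $V^\vee$ with the roles of $*$ and $!$ exchanged, and $p_*$ commutes with $\mathbb D$ because $p$ is proper. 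In $\mathbb D\mathcal F$ the fiber over $0$ then consists of $!$-components, so its stalk there vanishes, i.e.\ $\mathbb D\mathcal F$ is a $!$-extension at $0$. Dually, $\mathcal F$ is a $*$-extension at $0$.
\end{itemize}
Together with the generic computation, this yields both perversity and the isomorphism $\mathcal F\simeq j_{\infty!}j_{0*}(\mathcal F|_{\mathbb C^*})$. We do not try to make any claim at $t=1$, where the conditions are mixed; none is needed.

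\emph{Main obstacle.} The delicate step is the stalk vanishing at $t=\infty$ (and dually at $0$). The fiber $D_{14}\cup D_{25}$ also meets the other boundary divisors at its special points, and those include $*$-divisors. We must check that restricting $b_*a_!V$ to the whole fiber, including these triple-intersection points, gives zero. At each such point the local model is a normal crossing of one $!$-branch with the other branches. Lemma~\ref{lem:mixed-extension-locality}'s statement "including at intersections with $*$-components" covers exactly this situation, because the $!$-direction contributes a tensor factor $j_!$ whose stalk at the origin is zero. The stalk is therefore zero everywhere on the fiber, and proper base change gives $\mathcal F_\infty=0$.

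\emph{Shrinking isomorphism.} Apply the Shrinking Proposition to $\mathcal M=\mathcal F|_{\mathbb C^*}$, which is perverse and smooth outside $1$. It gives
\[
\Phi_{1-t}(\mathcal F)\simeq H^0\bigl(\mathbb P^1,j_{\infty!}j_{0*}\mathcal M\bigr)=H^0(\mathbb P^1,\mathcal F).
\]
Note that $\Phi_{1-t}(\mathcal F)=\Phi_{1-t}(\mathcal M)$ because vanishing cycles are local at $1$. Finally,
\[
H^0(\mathbb P^1,\mathcal F)=H^0\bigl(S,\jjj{V}[2]\bigr)=H^2(S,\jjj{V})
\]
by composing $p_*$ with pushforward to a point.
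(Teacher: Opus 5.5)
\textbf{There is a genuine gap at $t=1$.} You never prove perversity there, and your remark that no claim at $t=1$ is needed is false. Perversity is a stalk and costalk condition at every point of $\mathbb P^1$, and $t=1$ lies in $\mathbb C^*$.

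What you actually establish is three things: $\mathcal F|_{\M{4}}$ is a shifted local system, $i_\infty^*\mathcal F=0$, and $i_0^!\mathcal F=0$. The last two facts do give $\mathcal F\simeq j_{\infty!}j_{0*}(\mathcal F|_{\mathbb C^*})$. However, t-exactness of $j_{0*}$ and $j_{\infty!}$ only transports perversity from $\mathbb C^*$ to $\mathbb P^1$. It presupposes that $\mathcal F|_{\mathbb C^*}$ is already perverse, which is precisely a statement at $t=1$.

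Since proper pushforward along a relative curve only lands in ${}^pD^{[-1,1]}$, nothing you wrote excludes skyscraper terms ${}^pH^{\pm1}(\mathcal F)$ supported at $t=1$. The same point breaks your last step: the Shrinking proposition requires $\mathcal M=\mathcal F|_{\mathbb C^*}$ to be perverse.

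\textbf{Filling the gap within your method.}
\begin{enumerate}
\item By proper base change, $i_1^*\mathcal F\simeq R\Gamma(Z,\jjj{V}|_Z)[2]$ with $Z=D_{15}\cup D_{24}$.
\item The restriction to the $!$-component $D_{24}$ vanishes, including at the node. So this equals $R\Gamma(C_*,j_{**!}K)[2]$. Here $K$ is the restriction of $\jjj{V}$ to $D_{15}^\circ$; it is $*$-extended at the points on $D_{23}$ and $D_{34}$, and $!$-extended at the node.
\item Normal $*$-extension gives a triangle $K\to L_*\xrightarrow{T_*-1}L_*$. Each $R\Gamma(C_*,j_{**!}L_*)$ is concentrated in degree one, because one $!$-point and one $*$-point suffice. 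Hence $i_1^*\mathcal F$ lies in degrees $[-1,0]$.
\item For $\mathbb D\mathcal F$ the roles are exchanged. Over $t=1$ its $*$-component is $D_{24}$: the points on $D_{13}$ and $D_{35}$ are now $*$, and the node is $!$.
\item The same computation puts $i_1^*\mathbb D\mathcal F$ in $[-1,0]$, hence $i_1^!\mathcal F$ in $[0,1]$. This is exactly perversity at $t=1$.
\end{enumerate}

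\textbf{How the paper avoids the pointwise check.} Brown's dihedral charts make $S\setminus D_*$ and $S\setminus D_!$ affine, so $S\setminus D_*\to\mathbb P^1\setminus\{0\}$ is an affine map. Artin vanishing for $a_!V[2]$, followed by $j_{0*}$, then gives $\mathcal F\in{}^pD^{\le0}$ at every point, including $t=1$. Duality via $S\setminus D_!\to\mathbb P^1\setminus\{\infty\}$ gives ${}^pD^{\ge0}$. The same factorizations also yield the two extension conditions.

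Your stalk-vanishing argument at $0$ and $\infty$, using the restriction lemma and duality, is a legitimate alternative for the extension conditions; it simply does not reach $t=1$. A minor point: local constancy over $\M{4}$ comes from topological local triviality of the family with its four sections, not from proper base change alone.
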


\begin{proof}
The two opens $S\setminus D_*$ and $S\setminus D_!$
are Brown's affine dihedral partial compactifications
\cite[Lemmas~2.30--2.31]{BrownModuli}. Hence the map
\[
S\setminus D_*\longrightarrow\mathbb P^1\setminus\{0\}
\]
is affine. Its direct image, followed by $*$-extension
at $0$, gives the upper perverse bound for $\mathcal F$.
Duality and the affine map
\[
S\setminus D_!\longrightarrow\mathbb P^1\setminus\{\infty\}
\]
give the lower bound. The same factorizations give the
two extension conditions on the base.

A smooth fiber is a four-pointed projective line with
two $*$-points and two $!$-points. Its cohomology is
concentrated in degree one, of dimension
$2\operatorname{rank}V$, as the relative cocycle calculation in
Subsection~\ref{subsec:projection-vanishing-cycles} shows.
The last assertion follows by shrinking and proper direct image.
\end{proof}

Cyclic relabelling gives the corresponding construction
for the other forgetful projections.

\section{Betti presentations}
\label{sec:betti-presentations}

\subsection{Vanishing cycles of a forgetful projection}
\label{subsec:projection-vanishing-cycles}

\begin{definition}[Component nearby and vanishing cycles]
For $p=p_3$, put
\[
f=1-t,\qquad
Z=p^{-1}(1)=C_*\cup C_!,
\]
where
\[
C_*=D_{15},\qquad C_!=D_{24},\qquad
o=C_*\cap C_!.
\]
Let $h:Z\hookrightarrow S$ be the inclusion. Writing also
$f=(1-t)\circ p$ on $S$, put
\[
\mathscr N=\psi_f(\jjj{V}[2])[-1],
\qquad
\mathscr P=\phi_f(\jjj{V}[2])[-1].
\]
Proper direct image gives
\[
\begin{aligned}
\Psi&:=\Psi_{1-t}(\mathcal F)=H^0(Z,\mathscr N),\\
\Phi&:=\Phi_{1-t}(\mathcal F)=H^0(Z,\mathscr P).
\end{aligned}
\]

Let $L_*$ and $L_!$ be the normal nearby-cycle local
systems on the open components, defined by
\[
\mathscr N|_{C_\bullet^\circ}=L_\bullet[1].
\]
Here $C_\bullet^\circ$ is obtained by removing its three
boundary points. On $C_*$ place the external points at
$0,1$ and the node at $\infty$. On $C_!$ place the node
at $0$ and the external points at $1,\infty$. Put
\[
E=H^1(C_*,j_{**!}L_*),
\qquad
Q=H^1(C_!,j_{*!!}L_!).
\]
Both spaces have dimension $\operatorname{rank}V$.

Write $T_*$ for the normal monodromy about $D_{15}$,
acting on $L_*$ and $E$, and put
\[
\delta_*=T_*-1.
\]

\end{definition}

\begin{prop}
There is a commutative diagram of exact sequences
\[
\begin{tikzcd}[column sep=small,row sep=large]
0\arrow[r]
&E\arrow[r,"a"]\arrow[d,"\delta_*"']
&\Psi\arrow[r,"\rho"]\arrow[d,"\can"]
&Q\arrow[r]\arrow[d,equal]
&0\\
0\arrow[r]
&E\arrow[r,"\iota"']
&\Phi\arrow[r,"\pi"']
&Q\arrow[r]
&0.
\end{tikzcd}
\]
The map $a$ is represented by the interval joining the
two $*$-points in a nearby smooth fiber.
\end{prop}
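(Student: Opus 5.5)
We will compute $\Psi$ and $\Phi$ from the local structure of $\mathscr N$ and $\mathscr P$ on $Z=C_*\cup C_!$, using the distinguished triangle of the closed point $o$ and its open complement. First, near a general point of $C_*$ (resp.\ $C_!$) the function $f$ is a local coordinate transverse to the divisor, and $\jjj{V}$ is the $*$-extension (resp.\ $!$-extension) of a unipotent local system across it. For a $*$-extension in the normal direction, $\psi_f[-1]$ is $L_*[1]$ and $\phi_f[-1]$ is again $L_*[1]$, with $\can$ acting as $T_*-1=\delta_*$ under the standard identification. For a $!$-extension, $\phi_f[-1]=\psi_f[-1]$ via $\can$, both being $L_![1]$. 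At the three outer boundary points of each component we use Lemma~\ref{lem:mixed-extension-locality}: the extension conditions are inherited from the transverse divisors. So $\mathscr N|_{C_*}$ and $\mathscr P|_{C_*}$ are $j_{**!}$-type near the external points, and similarly $j_{*!!}$ for $C_!$, matching the definitions of $E$ and $Q$.

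The key step is the behaviour at the node $o=D_{15}\cap D_{24}$. Here $f=uv$ in local coordinates $u,v$, where $u=0$ is $C_!$ ($!$-direction) and $v=0$ is $C_*$ ($*$-direction), and $\jjj{V}$ is an external product of the $*$-extension in $v$ with the $!$-extension in $u$. A Künneth/Thom–Sebastiani computation for $\psi_{uv}$ of such a product should show two things. The stalk of $\mathscr N$ at $o$ is glued so that $\mathscr N$ is $!$-extended from $C_*^\circ$ and $*$-extended from $C_!^\circ$, consistently with the node being at $\infty$ on $C_*$ and at $0$ on $C_!$. For $\mathscr P$ we get the same shape, but with the $C_*$-part $\phi$ of a $*$-extension. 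This yields a short exact sequence of sheaves $0\to i_{C_*!}(\cdot)\to\mathscr N\to i_{C_!*}(\cdot)\to0$, and likewise for $\mathscr P$, with $\can$ compatible with these sequences. I expect this node computation to be the main obstacle: one must check that the monodromy in the two directions acts unipotently and that the $!$ condition kills the stalk as Lemma~\ref{lem:mixed-extension-locality} asserts. I would first try reducing to rank one by unipotent filtration, using exactness of $\psi,\phi$.

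Taking cohomology on $Z$ then gives the long exact sequences. The outer terms are $H^\bullet(C_*,j_{**!}L_*[1])$ and $H^\bullet(C_!,j_{*!!}L_![1])$, and these are concentrated in degree $0$, since a three-punctured line with mixed conditions has cohomology only in $H^1$ (Euler characteristic $-\operatorname{rank}V$ and no $H^0,H^2$ due to the mixed $!$/$*$ points). So both sequences are short exact. The vertical maps come from $\can$ restricted to each piece: $\delta_*$ on $E$ and the identity on $Q$. Commutativity follows from naturality of $\can:\psi\to\phi$.

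Finally, to identify $a$, we take a cycle on $C_*$ representing $E$, namely the interval $[0,1]$ between the two external $*$-points with a flat section. Its image in $\Psi=H^0(\mathbb P^1\text{-fiber})$ is the parallel transport of this interval into a nearby smooth fiber, using the specialization isomorphism $H(Z,\psi)\cong H(X_s)$. The interval joining the two $*$-points in a fiber degenerates into $C_*$ and avoids the node, which gives the stated description.
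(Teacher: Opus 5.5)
Your outline follows the paper's proof. It uses the local models on the open components, a decomposition of $\mathscr N$ and $\mathscr P$ into a $C_*$-piece that is $!$ at the node and a $C_!$-piece that is $*$ there, degree-zero concentration of the component cohomology, naturality of $\can$, and specialization of the interval on $C_*$, which avoids $o$. Your predicted shape at the node is also correct. The gap is that the node step, which you only describe as what a Künneth/Thom--Sebastiani computation ``should show,'' is the one non-formal point of the proof, and you do not carry it out. Thom--Sebastiani concerns sums $g(u)+g'(v)$ and is not adapted to the product $f=uv$ with mixed boundary conditions. Neither unipotence nor a reduction to rank one is needed.

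The missing argument is short and runs in three steps.
\begin{enumerate}
\item Take the closed piece to be all of $C_!$, not the point $o$. Use the localization triangles for the open $k:C_*\setminus\{o\}\hookrightarrow Z$ and the closed $i:C_!\hookrightarrow Z$, namely $k_!k^*\mathscr N\to\mathscr N\to i_*i^*\mathscr N$ and likewise for $\mathscr P$. The $!$-condition at $o$ on the $C_*$ side is then automatic. Both $k_!k^*\mathscr N$ and $k_!k^*\mathscr P$ are the $j_{**!}$-extension of $L_*[1]$, and after identification by $\var$, $\can$ becomes $\delta_*$.
\item For the $C_!$-piece, the Milnor fiber of $uv$ at $o$ is an annulus contained in $U$. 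Projection to the coordinate along $C_!$ identifies its cohomology with that of a punctured neighborhood of $o$ in $C_!$ with coefficients in $L_!$, compatibly with restriction to nearby points. Hence $i^*\mathscr N$ is the $*$-extension of $L_![1]$ at $o$ and the $!$-extension at the external points, so its cohomology is $Q$.
\item Restrict the Milnor triangle $h^*\jjj{V}[1]\to\mathscr N\xrightarrow{\can}\mathscr P\to$ to $C_!$. Since $(hi)^*\jjj{V}=0$, including at $o$ (Lemma~\ref{lem:mixed-extension-locality}), $i^*\can$ is an isomorphism. This gives the same $C_!$-term for $\mathscr P$ and the identity as right vertical map.
\end{enumerate}
With these steps in place, the rest of your argument goes through as written.
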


\begin{proof}
Let
\[
i:C_!\hookrightarrow Z,
\qquad
k:C_*\setminus\{o\}\hookrightarrow Z
\]
be the inclusions. The original mixed extension satisfies
\[
(hi)^*\jjj{V}=0.
\]
The Milnor triangle therefore gives an isomorphism
\[
i^*(\can):
i^*\mathscr N\xrightarrow{\sim}i^*\mathscr P.
\]

At the node write $f=uv$, with $C_*=(u=0)$ and
$C_!=(v=0)$. Projection to $u$ identifies the cohomology
of the nearby annulus with that of a punctured neighborhood
of the node on $C_!$. Thus $i^*\mathscr N$ extends
$L_![1]$ by $*$ at the node and by $!$ at the external
points, giving
\[
H^0(C_!,i^*\mathscr N)
 =H^0(C_!,i^*\mathscr P)=Q.
\]

On the open $*$-component, classical variation identifies
$\mathscr P$ with $\mathscr N$. After this identification,
$\can$ is $T_*-1$. The complexes
$k_!k^*\mathscr N$ and $k_!k^*\mathscr P$ are therefore
computed by the $j_{**!}$-extension on $C_*$.

The localization triangles now give the diagram: shrinking
concentrates the component terms in degree zero, with
cohomology $E$ and $Q$.

The interval on $C_*$ avoids the node. Specializing in a
product normal neighborhood gives the interval between
the two nearby $*$-points; naturality of support maps
identifies its class with $a$.
\end{proof}

\begin{rem}
In the Hodge realization, the left term of the lower
sequence is
\[
H^1(C_*,j_{**!}\mathbb L_*)(-1).
\]
The diagram above uses the classical Betti variation
to identify its underlying vector space with $E$.
The conversion to logarithmic variation is recorded in
Subsection~\ref{subsec:one-projection}.
\end{rem}

\begin{cor}
\label{cor:vanishing-cycle-pushout}
The vanishing-cycle space has the presentation
\[
\Phi\simeq
H^0\left[
E\xrightarrow{\ (a,-\delta_*)\ }\Psi\oplus E
\right],
\]
where the two terms have degrees $-1,0$. Explicitly,
\[
\Phi=
\frac{\Psi\oplus E}
{\{(a(v),-\delta_*v):v\in E\}},
\]
and the quotient map sends $(\xi,e)$ to
$\can(\xi)+\iota(e)$.
\end{cor}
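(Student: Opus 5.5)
The plan is to read the presentation directly off the commutative diagram of exact sequences in the preceding proposition. Its right-hand square has an identity as the vertical map, so the diagram is a morphism of extensions whose quotient terms are identified. Standard homological algebra then says the lower row is the pushout of the upper row along the left vertical map $\delta_*:E\to E$. Concretely, $\Phi$ is the pushout of $\Psi\xleftarrow{a}E\xrightarrow{\delta_*}E$, and the two pushout maps are $\can$ and $\iota$. The pushout is the cokernel of $E\to\Psi\oplus E$, $v\mapsto(a(v),-\delta_*v)$. That cokernel is exactly $H^0$ of the two-term complex in degrees $-1,0$, provided the map $(a,-\delta_*)$ is injective; this holds because $a$ is injective. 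So the two-term complex has no $H^{-1}$.

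The steps, in order, are as follows.

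First, define $\theta:\Psi\oplus E\to\Phi$ by $\theta(\xi,e)=\can(\xi)+\iota(e)$. It kills the image of $(a,-\delta_*)$ because the left square commutes: $\can\circ a=\iota\circ\delta_*$.

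Second, show $\theta$ is surjective. Given $\eta\in\Phi$, use surjectivity of $\rho$ to choose $\xi\in\Psi$ with $\rho(\xi)=\pi(\eta)$. The right square gives $\pi(\can\xi)=\rho(\xi)=\pi(\eta)$. Hence $\eta-\can\xi=\iota(e)$ for some $e$, by exactness of the lower row.

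Third, show the kernel of $\theta$ is exactly the image. Suppose $\can\xi+\iota e=0$. Applying $\pi$ gives $\rho(\xi)=0$, so $\xi=a(v)$ for some $v\in E$. Then $\iota(\delta_*v+e)=0$, and injectivity of $\iota$ gives $e=-\delta_*v$.

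This is a routine diagram chase (a five-lemma-style argument). The only point needing care is the sign convention $(a,-\delta_*)$. I would fix it by checking that it matches the commutativity $\can\circ a=\iota\circ\delta_*$ used in the first step, so that the quotient map is $\can+\iota$ with no extra sign. No real obstacle is expected beyond confirming that the diagram in the previous proposition commutes on the nose, not merely up to sign. That proposition states it, so we take it as given.
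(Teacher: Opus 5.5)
Your proposal is correct and is essentially the paper's proof. The paper simply notes that the diagram exhibits the lower row as the pushout of the upper row along $\delta_*$, and that injectivity of $a$ rules out cohomology in degree $-1$; your chase (well-definedness from the left square, surjectivity from the right square, then the kernel computation) just spells out that pushout identification.
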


\begin{proof}
The preceding diagram identifies the lower extension
with the pushout of the upper one by $\delta_*$. The
map $a$ is injective, so the displayed complex has no
cohomology in degree $-1$.
\end{proof}

We use the interval presentation from the proof of
\cite[Theorem~3]{MarkarianConvolution}, retaining the relative
terms at both $!$-points. A nearby real fiber $0<t_0<1$
has coordinate $x$ and marked points
\[
0,\quad 1,\quad\lambda=(1-t_0)^{-1},\quad\infty,
\]
with conditions $*,*,!,!$.
Choose a base point, paths to the punctures, and positive
meridians satisfying
\[
g_0g_1g_\lambda g_\infty=1.
\]
Let $V_f$ be the coefficient fiber and $T_j$ the corresponding
left monodromy operators. Put
\[
R=T_0T_1T_\lambda=T_\infty^{-1}.
\]

Fix the paths to $\lambda$ and $\infty$ disjoint from the
short interval joining $0$ to $1$. Fix its normal orientation
and identify its coefficient sections with $V_f$ so that
the class of $v$ has value $+v$ on $g_1$. Use the same
coefficient paths on $C_!$.

\begin{lemma}
\label{lem:relative-fiber-coordinates}
The relative cocycles
\[
c(z,q)=\bigl((R-1)q-T_0z,\ z,\ 0;\ 0,\ q\bigr),
\qquad z,q\in V_f,
\]
give an isomorphism
\[
\Psi\xrightarrow{\sim}V_f\oplus V_f,
\qquad [c(z,q)]\longmapsto(z,q).
\]
The interval and relative-loop identifications
$E=V_f$ and $Q=V_f$ satisfy
\[
a(v)=[c(v,0)],\qquad \rho(z,q)=q.
\]
In particular,
\[
\ell:Q\longrightarrow\Psi,
\qquad \ell(q)=[c(0,q)]
\]
is a section of $\rho$.
\end{lemma}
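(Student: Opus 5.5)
We compute $\Psi=H^1(\mathbb P^1,\jjj{V}|_{\text{fiber}})$ for the nearby real fiber with points $0,1,\lambda,\infty$ and conditions $*,*,!,!$ by the standard fundamental-groupoid cochain model. A class is given by the values $(w_0,w_1,w_\lambda,w_\infty)\in V_f^4$ of a crossed homomorphism on the meridians $g_0,g_1,g_\lambda,g_\infty$, together with relative terms $(r_\lambda,r_\infty)$ at the two $!$-points; these are the last two slots of $c$. The $*$-condition at $0,1$ imposes nothing beyond the cocycle. The $!$-condition at $\lambda,\infty$ forces the value on $g_j$ to equal $(T_j-1)r_j$, i.e.\ the cocycle must be trivialized on the boundary circle by the relative term. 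The single relation $g_0g_1g_\lambda g_\infty=1$ yields one linear cocycle condition, and the coboundaries are $(T_j-1)u$, shifting each $r_j$ by $u$.

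First, I will verify that $c(z,q)$ satisfies this relation. With $r_\lambda=0$, the value on $g_\lambda$ is $0$. With $r_\infty=q$, the value on $g_\infty$ is $(T_\infty-1)q=(R^{-1}-1)q$. Expanding the crossed-homomorphism rule on $g_0g_1g_\lambda g_\infty$ gives
\[
w_0+T_0w_1+T_0T_1w_\lambda+R\,w_\infty=0,
\]
and substituting $w_1=z$, $w_\lambda=0$, $w_\infty=(R^{-1}-1)q$ forces $w_0=(R-1)q-T_0z$. This matches the first entry of $c(z,q)$. The exact sign and ordering depend on the left-monodromy convention, so this is a bookkeeping check.

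Next, I will show the map $(z,q)\mapsto[c(z,q)]$ is an isomorphism. The cocycle space has dimension $4r+2r-r=5r$, where $r=\operatorname{rank}V$, and the coboundaries have dimension $r$, since $u\mapsto$ coboundary is injective because $r_\lambda$ shifts by $u$. This gives $\dim\Psi=4r$, contradicting $2r$ from Proposition~\ref{prop:forgetful-pushforward}.

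The resolution is that the boundary restriction to the $!$-points must vanish modulo the relative terms: the values $w_\lambda,w_\infty$ are determined by $r_\lambda,r_\infty$ rather than free. Then the free data are $w_1,r_\lambda,r_\infty$, with $w_0$ determined, giving $3r$; modulo $u$ this leaves $2r$. Using the coboundary to normalize $r_\lambda=0$ leaves exactly $(w_1,r_\infty)=(z,q)$ free. Hence every class has a unique representative $c(z,q)$, which proves injectivity and surjectivity at once.

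Finally, I will identify $a$ and $\rho$. By the Proposition preceding Corollary~\ref{cor:vanishing-cycle-pushout}, $a(v)$ is the interval class joining $0$ to $1$. Its cocycle takes value $+v$ on $g_1$ by the chosen normalization, $-T_0v$ on $g_0$ by the cocycle relation, and $0$ on the remaining meridians and relative terms, because the paths to $\lambda,\infty$ avoid the interval. This is $c(v,0)$.

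For $\rho$, specialization to $C_!$ retains only the $!$-points $1,\infty$ of $C_!$, which are the limits of $\lambda,\infty$, together with the node. The relative-loop identification $Q=V_f$ reads the relative term at $\infty$, so $\rho(c(z,q))=q$. Since we use the same coefficient paths on $C_!$, this holds without twisting, and $\ell$ is then a section by construction.

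The main obstacle is this last step: checking that the specialization $\Psi\to Q$ in the proof of the Proposition preceding Corollary~\ref{cor:vanishing-cycle-pushout} really corresponds to reading $r_\infty$, with no monodromy factor. I would verify it by evaluating on $c(0,q)$, which is supported near $\lambda,\infty$ and so lives entirely in a neighborhood of $C_!$.
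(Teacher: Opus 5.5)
Your proposal is correct and follows the paper's proof: the same relative cocycle model, where keeping $g_\infty$ as a fourth generator subject to one relation is equivalent to the paper's free generators $g_0,g_1,g_\lambda$; the same gauge $r_\lambda=0$, with the meridian relation fixing the first entry; and the same reading of $\rho$ from the peripheral terms after forgetting $g_0,g_1$. Two tidying points: discard the erroneous $4r$ count, since your corrected count $3r-r=2r$ is the right one, and state $\rho$ gauge-invariantly as $r_\infty-r_\lambda$ (the paper's $q=w_\infty-w_\lambda$), because $r_\infty$ alone shifts under coboundaries, although the two agree on the normalized representatives $c(z,q)$.
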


\begin{proof}
A degree-one relative cocycle is represented by
\[
(z_0,z_1,z_\lambda;w_\lambda,w_\infty)\in V_f^{\oplus5}.
\]
The first three entries are crossed-homomorphism values
on the meridians; the last two are degree-zero terms
in the peripheral circle complexes at the $!$-points.
The cocycle equations are
\[
\begin{aligned}
z_\lambda&=(T_\lambda-1)w_\lambda,\\
z_0+T_0z_1+T_0T_1z_\lambda&=(R-1)w_\infty.
\end{aligned}
\]
Coboundaries are
\[
\bigl((T_0-1)v,(T_1-1)v,(T_\lambda-1)v;v,v\bigr),
\qquad v\in V_f.
\]
They are obtained from the mapping fiber of restriction
from the punctured sphere to its two peripheral circle
complexes.

Every class has a unique representative with
$w_\lambda=0$. Solving the two equations then gives
$c(z,q)$, proving the first assertion.

The interval class has value $+v$ on $g_1$ and zero on
$g_\lambda,g_\infty$. The meridian relation forces the
value $-T_0v$ on $g_0$, giving $a(v)=[c(v,0)]$.

Restriction of $\mathscr N$ to $C_!$ forgets the two
meridians on the other component. With the retained
coefficient paths, its class is determined by
$q=w_\infty-w_\lambda$. Hence $\rho(z,q)=q$ and
$\ell(q)=[c(0,q)]$ is a section; its term $(R-1)q$
is the monodromy correction to the relative loop.

\end{proof}

\begin{prop}
With these paths and coefficient identifications, the map
\[
\vartheta_\ell:E\oplus Q\longrightarrow\Phi,
\qquad
(e,q)\longmapsto
\iota(e)+\can\ell(q)
\]
is an isomorphism of Betti spaces.
\end{prop}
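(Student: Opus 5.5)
The plan is to deduce this from the lower exact sequence of the commutative diagram of exact sequences in Subsection~\ref{subsec:projection-vanishing-cycles},
\[
0\longrightarrow E\xrightarrow{\ \iota\ }\Phi\xrightarrow{\ \pi\ }Q\longrightarrow 0 .
\]
A splitting of this sequence gives the isomorphism. The map $\vartheta_\ell$ restricted to $E\oplus 0$ is $\iota$, which is injective. So the only point is that $q\mapsto\can\ell(q)$ is a section of $\pi$.

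\emph{The section property.} Commutativity of the right-hand square gives $\pi\circ\can=\rho$, since the vertical map on $Q$ is the identity. Lemma~\ref{lem:relative-fiber-coordinates} gives $\rho\circ\ell=\mathrm{id}_Q$. Hence
\[
\pi\,\can\,\ell(q)=\rho\,\ell(q)=q .
\]
It follows that $\pi\circ\vartheta_\ell(e,q)=q$, because $\pi\circ\iota=0$.

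\emph{Bijectivity by the five lemma.} Consider the split sequence $0\to E\to E\oplus Q\to Q\to 0$. The map $\vartheta_\ell$ is a morphism from it to the lower sequence: it is the identity on $E$ (via $\iota$) and the identity on $Q$ (by the computation above). The five lemma, or a direct diagram chase, then shows that $\vartheta_\ell$ is an isomorphism.

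\emph{Direct chase and dimension check.} Written out directly: if $\iota(e)+\can\ell(q)=0$, applying $\pi$ gives $q=0$, and then injectivity of $\iota$ gives $e=0$. For surjectivity, take $x\in\Phi$. Then $x-\can\ell(\pi x)$ lies in $\ker\pi=\operatorname{im}\iota$. Dimensions agree as a consistency check: $\dim\Phi=2\operatorname{rank}V$ by Proposition~\ref{prop:forgetful-pushforward}, and both $E$ and $Q$ have dimension $\operatorname{rank}V$.

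\emph{Main obstacle.} The only delicate point is that the identifications $E=V_f$ and $Q=V_f$ entering $\iota$, $\rho$ and $\ell$ must be the same ones used in the diagram. In particular, $\rho$ in Lemma~\ref{lem:relative-fiber-coordinates} has to be the restriction map of the diagram, computed with the retained coefficient paths on $C_!$. That lemma asserts exactly this. Granting it, the argument is a formal splitting statement. Compatibility with the Hodge realization is not claimed here, since $\vartheta_\ell$ is only asserted to be an isomorphism of Betti spaces.
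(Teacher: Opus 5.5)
Your proof is correct, but it takes a slightly different route from the paper's.

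The paper argues in coordinates. It combines the pushout presentation of Corollary~\ref{cor:vanishing-cycle-pushout} with the cocycle coordinates $[c(z,q)]\mapsto(z,q)$ of Lemma~\ref{lem:relative-fiber-coordinates}, which identifies $\Phi$ with $V_f\oplus V_f$ via $[(z,q,e)]\mapsto(e+\delta_*z,q)$. In these coordinates $\iota(e)=(e,0)$ and $\can(z,q)=(\delta_*z,q)$. Hence $\can\ell(q)=(0,q)$, so $\vartheta_\ell$ is literally the identity.

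You avoid coordinates altogether. You use only exactness of the lower row, the right square $\pi\can=\rho$, and $\rho\ell=\mathrm{id}_Q$, and then apply the short five lemma. This shows the conclusion holds for any section of $\rho$; the specific paths matter only through the existence of $\ell$.

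The paper's computation also records the formula $\can(z,q)=((T_*-1)z,q)$ in the $\vartheta_\ell$-presentation, which Appendix~\ref{app:betti-comparison} relies on. Your framework recovers that formula just as formally. Since $\Psi=a(E)\oplus\ell(Q)$, the left square $\can a=\iota\delta_*$ gives $\can(a(z)+\ell(q))=\iota(\delta_*z)+\can\ell(q)$.

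Your dimension check is not needed for the argument.
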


\begin{proof}
By Corollary~\ref{cor:vanishing-cycle-pushout}, the quotient
coordinates on $\Phi$ are
\[
[(z,q,e)]\longmapsto(e+\delta_*z,q).
\]
Under this identification,
\[
\iota(e)=(e,0),\qquad
\pi(e,q)=q,\qquad
\can(z,q)=(\delta_*z,q).
\]
In particular,
$\can\ell(q)=(0,q)$, proving the assertion.
\end{proof}

\begin{rem}
A basis of $V_f$ gives the interval and lifted-loop basis
of $\Phi$, hence of $H^2(S,\jjj{V})$ by $\varphi_I$.
This presentation depends on the paths: replacing $\ell$
by $\ell+ah$, for $h:Q\to E$, changes the lift by
\[
\can(\ell+ah)
 =\can\ell+\iota\delta_*h.
\]
\end{rem}

\begin{cor}
\label{cor:global-concentration}
For every unipotent local system $V$ on $\M{5}$,
\[
H^i(S,\jjj{V})=0\qquad(i\ne2),
\qquad
\dim H^2(S,\jjj{V})=2\operatorname{rank}V.
\]
\end{cor}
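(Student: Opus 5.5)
The plan is to compute $H^\bullet(S,\jjj{V})$ by pushing forward along $p=p_3$ and reading it off from the results of Section~\ref{sec:cohomology-functor}. Since $p$ is proper, $H^i(S,\jjj{V})=H^{i-2}(\Ms{4},\mathcal F)$ with $\mathcal F=p_*\jjj{V}[2]$. By Proposition~\ref{prop:forgetful-pushforward}, $\mathcal F$ is perverse and satisfies
\[
\mathcal F\simeq j_{\infty!}j_{0*}(\mathcal F|_{\mathbb C^*}),
\]
and $\mathcal F|_{\mathbb C^*}$ is a perverse sheaf on $\mathbb C^*$ that is smooth outside $1$. This is exactly the setting of the Shrinking proposition of Subsection~\ref{subsec:shrinking}. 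Its vanishing statement $H^k(\mathbb P^1,j_{\infty!}j_{0*}\mathcal M)=0$ for $k\neq0$ then gives $H^i(S,\jjj{V})=0$ for $i\neq 2$.

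For the dimension, the Shrinking isomorphism $\varphi_I$ gives
\[
H^2(S,\jjj{V})\simeq H^0(\mathbb P^1,\mathcal F)\simeq\Phi=\Phi_{1-t}(\mathcal F).
\]
The isomorphism $\vartheta_\ell:E\oplus Q\to\Phi$ established just above then reduces the count to $\dim E+\dim Q$. Lemma~\ref{lem:relative-fiber-coordinates} identifies $E=V_f$ and $Q=V_f$, so both have dimension $\operatorname{rank}V$. Alternatively, one can count from the exact sequence $0\to E\to\Phi\to Q\to0$ of the commutative diagram. Either way $\dim H^2=2\operatorname{rank}V$.

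The only point needing care is that the proof of Proposition~\ref{prop:forgetful-pushforward} itself appealed to the relative cocycle calculation for the smooth fiber, namely that the four-pointed line with conditions $*,*,!,!$ has cohomology only in degree one, of dimension $2\operatorname{rank}V$. To avoid circularity I will note that Lemma~\ref{lem:relative-fiber-coordinates} supplies this directly. Degree-zero relative classes vanish because the coefficient data must vanish at the $!$-points. Degree two vanishes because the cocycle map is surjective, which can be checked by solving the two cocycle equations for arbitrary targets. Degree one is $V_f\oplus V_f$. This is the main point to verify, and I expect it to be routine from the explicit complex. Since the statement holds for every unipotent $V$ and uses only Betti data, no Hodge input is needed.
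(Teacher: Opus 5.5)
Your proposal is correct and is essentially the paper's proof: push forward along $p_3$, use Proposition~\ref{prop:forgetful-pushforward} and shrinking to concentrate the cohomology in degree two and identify it with $\Phi$, then count $\dim E+\dim Q=2\operatorname{rank}V$. Your check that the smooth-fiber calculation of Lemma~\ref{lem:relative-fiber-coordinates} does not depend on the corollary is a sensible addition; the paper also remarks that the vanishing follows from Artin vanishing on the affine opens $S\setminus D_*$ and $S\setminus D_!$.
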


\begin{proof}
By Proposition~\ref{prop:forgetful-pushforward}, shrinking
identifies the only cohomology group with $\Phi$, of
dimension $\dim E+\dim Q$.
\end{proof}

\begin{rem}
The vanishing also follows directly from Artin vanishing
on the two affine opens used in
Subsection~\ref{subsec:mixed-extension}. Their dihedral
orders are
\[
S\setminus D_!=\M{5}^{\delta},
\qquad
S\setminus D_*=\M{5}^{\delta'},
\]
where $\delta=(12345)$ and $\delta'=(13524)$.
Artin vanishing on $S\setminus D_*$, applied to the
perverse sheaf $a_!V[2]$, gives vanishing in degrees
greater than $2$. Verdier duality exchanges the two
extension conditions, so the same argument on
$S\setminus D_!$ gives vanishing in degrees less than $2$.
\end{rem}

\subsection{Two-dimensional shrinking}
\label{subsec:two-dimensional-shrinking}

Put $Y=S\setminus D_*$. We construct a presentation by
sections supported on two disks. The first is the main real
pentagon. The second is obtained by cutting the remaining real
chambers along $D_*$ and moving their $!$-edges in imaginary
normal directions.

Write the markings as $(0,\infty,1,x,y)$. Let $P$ and $P^{\mathrm{opp}}$
be the closures in $S(\mathbb R)$ of the chambers
\[
P^\circ=\{0<y<x<1\},\qquad
(P^{\mathrm{opp}})^\circ=\{x<0,\ 1<y\}.
\]
The sides of $P$ lie in $D_*$, and those of $P^{\mathrm{opp}}$ lie in
$D_!$. Moreover,
\[
P\cap D_!=\varnothing,\qquad P^{\mathrm{opp}}\cap D_*=\varnothing.
\]
We orient the real chambers by
\[
\eta=\frac{dx\wedge dy}{y(x-y)(1-x)}.
\]
This form has divisor $-D_*$ and is therefore nonvanishing
on $Y(\mathbb R)$.

Cut $S(\mathbb R)$ along all the curves $D_*(\mathbb R)$,
separating the quadrants at their intersections.
One component of the resulting surface is $P$.
Denote the remaining cut surface by $\Sigma$ and its natural
map to $S(\mathbb R)$ by $r$. The following lemma shows that
$\Sigma$ is connected.

\begin{lemma}
\label{lem:eleven-pentagon-disk}
The surface $\Sigma$ is a disk consisting of the eleven
pentagons other than $P$. The inverse image of each
component of $D_!(\mathbb R)$ is a properly embedded
interval containing one side of $P^{\mathrm{opp}}$.
\end{lemma}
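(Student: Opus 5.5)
We will use the classical combinatorics of $S(\mathbb R)=\Ms{5}(\mathbb R)$. It is a non-orientable closed surface, the connected sum of five real projective planes. The ten lines $D_{ij}(\mathbb R)$ tile it into $12$ pentagons, one for each dihedral order of the five markings up to reversal ($4!/2=12$). Each divisor component $D_{ij}(\mathbb R)\cong\mathbb P^1(\mathbb R)$ is a circle meeting three others, at the points $D_{ij}\cap D_{kl}$ with $\{k,l\}\cap\{i,j\}=\varnothing$.

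A dihedral order is a $5$-cycle, and its pentagon has as sides the divisors $D_{ij}$ with $i,j$ adjacent in the cycle. The $*$-pairs form the cycle $\delta=(12345)$ and the $!$-pairs form the complementary cycle $\delta'=(13524)$. So the only pentagon with all sides in $D_*$ is $P$, and the only one with all sides in $D_!$ is $P^{\mathrm{opp}}$; this is consistent with the description above. Cutting along $D_*(\mathbb R)$ and removing $P$ leaves the other eleven pentagons glued only along their $!$-sides. We first check that each $!$-side is actually shared by two of these eleven: a side of $P$ is a $*$-side, so $P$ takes part in no $!$-gluing.

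For the disk assertion, consider the graph $G$ whose vertices are the eleven pentagons and whose edges are the shared $!$-sides. An interior point of the cut surface along a $!$-side has a half-disk neighbourhood on each side. At a vertex, two cases occur. At $D_!\cap D_!$ the four quadrants are glued cyclically, giving an interior point. At $D_*\cap D_!$ the cut along $D_*$ separates the quadrants into two pairs, giving boundary points. The case $D_*\cap D_*$ cannot occur, since such vertices are exactly the vertices of $P$, because $D_*$ is the pentagon cycle $\delta$.

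It remains to show that $\Sigma$ is a topological disk. Count cells: $F=11$ faces. For edges, each $!$-component is a circle cut at its three intersection points into three arcs, giving $15$ interior $!$-edges. The $*$-arcs not bounding $P$ are doubled by the cut, giving boundary edges. For vertices, $D_!\cap D_!$ gives $5$ interior vertices, and $D_*\cap D_!$ gives $10$ points, each doubled. Then we compute $\chi(\Sigma)=V-E+F$ and check that it equals $1$, and that the boundary is a single circle. A cleaner route is $\chi(S(\mathbb R))=-3$: removing $P$ (an open disk) and cutting along the graph $D_*(\mathbb R)\setminus\partial P$ lets us compute $\chi$ by additivity. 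Connectedness of $\Sigma$ amounts to connectedness of $G$, and one can show directly that $G$ is a tree-like arrangement. A compact connected surface with one boundary circle and $\chi=1$ is a disk. The main obstacle is this Euler characteristic and boundary-component bookkeeping; the first thing to try is to verify it explicitly in the coordinates $(x,y)$, listing the eleven chambers.

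For the second assertion, take a $!$-component $D_{ij}(\mathbb R)$. It is a circle meeting two $*$-components and one $!$-component; the count is forced by the $\delta,\delta'$ adjacency. The two $*$-crossings cut the circle into two arcs. Their lifts to $\Sigma$ end on $\partial\Sigma$, and the $!$-crossing is an interior point where the arc continues straight through. Exactly one of the two arcs contains the side of $P^{\mathrm{opp}}$: that side lies between two $!$-crossings of $P^{\mathrm{opp}}$'s own cycle and meets no $*$-divisor. We need to check that the inverse image $r^{-1}(D_{ij}(\mathbb R))$ is a single properly embedded interval, namely this arc extended through the interior crossings with its endpoints on $\partial\Sigma$. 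Checking this pins down the incidence pattern; we verify it for one component and conclude for the others by cyclic symmetry, which preserves both $D_*$ and $D_!$.
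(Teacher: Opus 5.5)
Your argument for the second assertion rests on a wrong incidence count. You correctly note that $D_{ij}$ meets exactly the three $D_{kl}$ with $\{k,l\}$ in the complementary triple. But for a $!$-pair only one of those three pairs is adjacent in $\delta=(12345)$. For example, $D_{13}$ meets $D_{45}$, a $*$-component, and $D_{24},D_{25}$, which are $!$-components. So each $!$-circle meets $D_*$ in one point and $D_!$ in two, not the reverse. With your count, the cut would break $D_{ij}(\mathbb R)$ at two points into two disjoint arcs, so $r^{-1}(D_{ij}(\mathbb R))$ would not be a single interval. Your own remark that the side of $P^{\mathrm{opp}}$ lies between two $!$-crossings is also incompatible with a single $!$-crossing. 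With the correct count the assertion is immediate, as in the paper. Cutting at the unique $*$-point leaves an interval. Its two $!$-crossings are the vertices of $P^{\mathrm{opp}}$ on it; they are interior points of $\Sigma$, since the four quadrants there are glued and none is $P$. These two points divide the interval into three edges, the middle one being the side of $P^{\mathrm{opp}}$, and its endpoints are the two copies of the $*$-point on $\partial\Sigma$.

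For the disk assertion your route differs from the paper's. You want to combine $\chi(\Sigma)=1$ with the classification of surfaces. The paper instead assembles $\Sigma$ by hand from $P^{\mathrm{opp}}$, the five chambers across its sides, and the five chambers opposite its vertices, using that crossing $D_{ij}$ swaps the adjacent labels $i,j$. It checks that these ten chambers are distinct, differ from $P$, and are attached along boundary intervals.

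Your route can be made to work, but the counts you give are wrong. First, $D_*\cap D_!$ has $5$ points, not $10$, since each $*$-circle meets only one $!$-circle. Second, $D_*\cap D_*$ points do occur in $\Sigma$: each vertex of $P$ is also a corner of three other chambers, which the cut turns into three boundary corners. The correct numbers are $V=5+10+15=30$, $E=15+25=40$, $F=11$, so $\chi(\Sigma)=1$. Equivalently, additivity from $\chi(S(\mathbb R))=-3$ gives $\chi(P\sqcup\Sigma)=2$.

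More seriously, $\chi=1$ does not give connectedness, and you do not prove it. The graph $G$ is not a tree either, since it has $11$ vertices and $15$ edges. Once connectedness is established, a compact connected surface with nonempty boundary and $\chi=1$ is indeed a disk. But establishing connectedness requires exactly the adjacency information in the paper's table, and from that table the disk structure can be read off directly.
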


\begin{proof}
Label a chamber by its cyclic order, modulo reversal and
cyclic rotation. Crossing $D_{ij}$ interchanges the adjacent
labels $i,j$. At a vertex, the two incident pairs are disjoint,
so these interchanges commute. The following table records
the attachments around $P^{\mathrm{opp}}$, of order $13524$.
The sides in the first column occur in boundary order; the
last column gives the chamber opposite $P^{\mathrm{opp}}$
at the vertex shared with the next side, cyclically.
\[
\begin{array}{c|c|c}
\text{side}&\text{chamber across the side}
 &\text{chamber at the next vertex}\\ \hline
D_{13}&13425&12543\\
D_{25}&13254&14325\\
D_{14}&12534&12354\\
D_{35}&14235&12435\\
D_{24}&12453&13245
\end{array}
\]
Start with $P^{\mathrm{opp}}$ and attach the five chambers
in the middle column along their indicated sides. At each
vertex of $P^{\mathrm{opp}}$, three of the four incident
chambers have then been included. The chamber in the last
column attaches along the two exposed edges meeting at that
vertex. Its other three sides lie in $D_*$ and remain cut.
The table lists ten distinct chambers, all different from
$P$ and $P^{\mathrm{opp}}$, so it accounts for the entire
remaining surface. Every attachment is along a boundary
interval, and there are no further uncut edges to identify.
Thus $\Sigma$ is a disk.

Each component of $D_!(\mathbb R)$ is a circle meeting $D_*$
at exactly one point. Cutting at that point gives an interval;
the table places its three consecutive edges in $\Sigma$,
with the middle edge a side of $P^{\mathrm{opp}}$.
Its endpoints lie on $\partial\Sigma$.
\end{proof}

Choose the normal direction to each of these five
intervals to point outward from $P^{\mathrm{opp}}$ along
its side, and extend it along the interval. Put
\[
X=S\setminus D_!,\qquad D=D_*\cap X.
\]

\begin{lemma}
\label{lem:displaced-disk}
For sufficiently small $\epsilon>0$, there is an imaginary
normal displacement
\[
F_\epsilon:(\Sigma,\partial\Sigma)\longrightarrow(X,D)
\]
with the prescribed normal signs, such that
\[
F_\epsilon^{-1}(D)=\partial\Sigma,
\qquad F_\epsilon(\Sigma^\circ)\subset U.
\]
Its interior is embedded and closed in $Y$, and is disjoint
from $P^\circ$. The displacement is unique up to a homotopy
through such normal graphs. It may be chosen to intersect
$P^{\mathrm{opp}}$ at the single nondegenerate zero of an
outward-pointing vector field on $P^{\mathrm{opp}}$.
\end{lemma}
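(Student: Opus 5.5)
We build $F_\epsilon$ as the graph of a small imaginary normal vector field on $r(\Sigma)$, which we first construct locally and then glue. Write points of $S$ near $S(\mathbb R)$ as $p+\sqrt{-1}\,\epsilon\,\nu(p)$. Here $\nu$ is a real tangent vector field on $\Sigma$, pulled back through $r$, and we use a holomorphic chart, or the exponential map of a fixed hermitian metric, to make sense of the formula. Such a point lies in $S(\mathbb R)$ only where $\nu=0$, to first order in $\epsilon$. So the conditions $F_\epsilon^{-1}(D)=\partial\Sigma$ and $F_\epsilon(\Sigma^\circ)\subset U$ become conditions on $\nu$ along the boundary divisors. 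The real locus of each boundary curve $D_{ij}$ is a real curve, and $D_{ij}$ is the complexification of that curve. A small displaced point $p+\sqrt{-1}\epsilon\nu$ with $p\in D_{ij}(\mathbb R)$ therefore lies in $D_{ij}$ exactly when $\nu(p)$ is tangent to $D_{ij}(\mathbb R)$. Near $D_{ij}(\mathbb R)$ but off it, the point lies in $D_{ij}$ only if $p$ is real-close and the imaginary normal coordinate vanishes.

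The conditions on $\nu$ are as follows.

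\emph{(i) Interior of the $!$-intervals.} By Lemma~\ref{lem:eleven-pentagon-disk}, each component of $D_!(\mathbb R)$ pulls back to a properly embedded interval in $\Sigma$. Along that interval, $\nu$ must be transverse to it, with the prescribed sign pointing outward from $P^{\mathrm{opp}}$. Then $\operatorname{Im}$ of the normal coordinate $v$ of $D_{ij}$ is $\epsilon\,\partial_\nu v\ne0$ near the interval, so the image avoids $D_!$. This gives $F_\epsilon(\Sigma)\subset X$.

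\emph{(ii) Boundary $\partial\Sigma$.} On $\partial\Sigma$, which maps into $D_*(\mathbb R)$, we take $\nu$ tangent to the boundary curve. Then the boundary stays in $D$, and near the boundary the imaginary part of the normal coordinate is $\epsilon\,\partial_\nu u=0$ to first order.

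\emph{(iii) Interior of $\Sigma$.} Condition (ii) alone does not keep interior points near $\partial\Sigma$ off $D_*$. So we prefer a cleaner model: in a collar, take $\nu$ to vanish to first order in the distance $s$ to the boundary, i.e. $\nu=s\cdot\nu_1$ with $\nu_1$ transverse to the $D_*$ curve. Then in a local coordinate $u$ defining $D_*$ we get $u(F_\epsilon)=s(1+\sqrt{-1}\epsilon\,\partial_{\nu_1}u)+O(s^2)$, which is nonzero for $s>0$. Away from the collar, $\nu$ is nonzero on $\Sigma^\circ$ except where it is forced to vanish, and the $D_*$ curves do not meet $\Sigma^\circ$ because $\Sigma$ was cut along them.

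\emph{(iv) Corners.} At the vertices of $\Sigma$, the $D_*$ cuts meet $D_!$ curves or other $D_*$ curves. There we use the product structure from Lemma~\ref{lem:mixed-extension-locality}'s normal crossing coordinates $(u,v)$ and choose $\nu$ componentwise, as a product of the one-dimensional models.

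With these conditions, $F_\epsilon(\Sigma^\circ)\subset U$ holds for all small $\epsilon$, by compactness of $\Sigma$ and the first-order estimates. The interior is disjoint from $P^\circ$ because it is either nonreal or lies in chambers different from $P$. Embeddedness holds because $r$ is injective on $\Sigma^\circ$ and the graph of a vector field is embedded. Closedness in $Y$ holds because the only limit points are on $\partial\Sigma$, which maps into $D_*$.

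Existence of $\nu$ is a partition-of-unity gluing of these local choices. The transversality constraints along the five intervals are open convex conditions (a half-space for the sign), the boundary constraints are linear, and convex combinations preserve them. The same convexity argument gives uniqueness up to homotopy through admissible fields, hence through such normal graphs.

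For the last assertion, note that the constraints do not force $\nu$ to be nonzero on the interior of $P^{\mathrm{opp}}$, which sits inside $\Sigma$. On $\partial P^{\mathrm{opp}}$, $\nu$ points outward by the sign prescription. A convex choice on the pentagon, namely $\nu$ radial from an interior point $c$, has exactly one nondegenerate zero. At that zero the displaced disk meets $S(\mathbb R)$, so it meets $P^{\mathrm{opp}}$ exactly there. Elsewhere $\nu\ne0$ and the point is nonreal, at least in the chambers where we keep $\nu$ nonvanishing. The Euler characteristic allows this: the disk $\Sigma$ carries a field tangent to its boundary and nonvanishing except in $P^{\mathrm{opp}}$. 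The index count is $\chi=1$, so one zero of index $+1$ suffices. We also check that the radial field extends over the other ten chambers compatibly with (i) and (ii).

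The main obstacle is this global extension. Starting from the outward field on $\partial P^{\mathrm{opp}}$, we must reach fields tangent to the $D_*$ boundary arcs and transverse to the continuations of the $!$-intervals, without new zeros. I would settle it chamber by chamber using the attachment table of Lemma~\ref{lem:eleven-pentagon-disk}. Each attached chamber is a disk on which the boundary data has winding number zero, so a nonvanishing extension exists. The delicate case is the corner chambers from the last column of the table, where two $!$-edges meet and their prescribed normals must be reconciled.
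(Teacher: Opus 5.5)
Your overall strategy is the paper's. You take an imaginary normal graph of a real field along $r$, with the prescribed sign along the five $!$-intervals, tangency to $D_*$ on $\partial\Sigma$ and product models at crossings. Uniqueness comes from convex interpolation, embeddedness from injectivity of $r|_{\Sigma^\circ}$, and closedness from $\partial\Sigma$ mapping to $D_*$. Two of your departures from that argument go wrong.

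\emph{Step (iii).} Step (iii) rests on a false worry and, as written, breaks the statement. In holomorphic coordinates $(u,w)$ adapted to the boundary, real on $S(\mathbb R)$, with $D_*=\{u=0\}$, the displaced point has $u$-coordinate $u(p)+i\epsilon\nu_u(p)$. Its real part $u(p)$ is already nonzero for $p\notin\partial\Sigma$, so tangency (ii) alone keeps $\Sigma^\circ$ off $D_*$. The paper uses exactly such coordinates; the second-order error you fear comes only from using a metric exponential map. Meanwhile, $\nu=s\nu_1$ forces $\nu=0$ on all of $\partial\Sigma$, including the ten endpoints of the $!$-intervals. These would then map to the five real points of $D_*\cap D_!$, which are not in $X$. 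So $F_\epsilon$ would not be a map $(\Sigma,\partial\Sigma)\to(X,D)$, contradicting your own claim $F_\epsilon(\Sigma)\subset X$ in (i). At these endpoints $\nu$ must be nonzero, tangent to $D_*$, and transverse to $D_!$ with the prescribed sign, so that they are pushed into $D_*\setminus D_!$. This is the paper's ``the prescribed transverse direction is compatible with tangency to $D_*$''. Your product model (iv) gives exactly this if you use it near these points instead of (iii).

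\emph{The ``main obstacle''.} This obstacle is self-imposed, since nothing in the lemma requires $\nu\neq0$ on $\Sigma\setminus P^{\mathrm{opp}}$. Real points of the graph are exactly the zeros of $\nu$. A zero in the interior of one of the other ten chambers lands in $U$ outside $P^{\mathrm{opp}}$, and $\nu\ne0$ on the intervals. Hence the intersection with $P^{\mathrm{opp}}$ is just the zero set of $\nu$ on the $P^{\mathrm{opp}}$ chamber of $\Sigma$. So fix the outward field with one nondegenerate zero on $P^{\mathrm{opp}}$ and glue it to the local models by a partition of unity. Near $\partial P^{\mathrm{opp}}$, including its corners, all pieces have normal components of the prescribed signs, so no new zero enters $P^{\mathrm{opp}}$. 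This is the content of the paper's ``such a field exists''. The chamber-by-chamber winding argument is unnecessary. The Euler characteristic count you cite also does not apply as stated: tangency to two transverse $D_*$ arcs forces $\nu=0$ at the fifteen right-angled corners of $\Sigma$.
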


\begin{proof}
Choose a real vector field $v$ along $r$ with the following
properties:
its normal component along each of these five intervals
has the chosen sign; it is tangent to $D_*$ on
$\partial\Sigma$; and its restriction to $P^{\mathrm{opp}}$ points
outward along $\partial P^{\mathrm{opp}}$ and has one nondegenerate
zero of index $1$ in $(P^{\mathrm{opp}})^\circ$.
Such a field exists. The normal directions extend
along the intervals, and at their crossings the two
normal conditions are independent. At an intersection
of $D_!$ with $D_*$, the prescribed transverse direction
is compatible with tangency to $D_*$. The resulting
local fields can be patched on the cut surface.

Take the imaginary normal graph in the direction $v$,
using tubular coordinates compatible with the boundary
divisors. Its tubular projection to $S(\mathbb R)$ is $r$.
At a crossing of two components of $D_!$,
the local construction has the form
\[
(u,w)\longmapsto
\bigl(u+i\epsilon v_u,\ w+i\epsilon v_w\bigr),
\]
with both normal components nonzero on the corresponding
real axes. At a crossing with $D_*$ the deformation
preserves $D_*$. This construction is made on $\Sigma$;
different boundary points of $\Sigma$ mapping to the same
point of $S(\mathbb R)$ retain their separate branches.

Interpolation in the prescribed normal half-spaces,
preserving the boundary in $D_*$, joins any two sufficiently
small choices by a homotopy of normal graphs.

Since $r|_{\Sigma^\circ}$ is injective, the normal graph
$F_\epsilon|_{\Sigma^\circ}$ is an embedding. Its image and
$P^\circ$ have disjoint real projections and are closed
in $Y$: their boundaries lie in the removed divisor
$D_*$.
\end{proof}

\begin{definition}
\label{def:disk-supported-maps}
Write
\[
i_P:P^\circ\hookrightarrow Y,\qquad
i_\Sigma:\Sigma^\circ\hookrightarrow Y
\]
for these embeddings, with $i_\Sigma=F_\epsilon|_{\Sigma^\circ}$,
and put
\[
S_P(V)=\Gamma(P^\circ,V),\qquad
S_\Sigma(V)=\Gamma(\Sigma^\circ,F_\epsilon^*V).
\]
Both spaces have dimension $\operatorname{rank}V$.

Set $\mathcal G(V)=a_!V[2]$.
The disk and complex orientations trivialize the normal
orientation lines. Both images lie in $U$, so real purity
in codimension two gives
\[
i_P^!\mathcal G(V)\simeq V|_{P^\circ},
\qquad
i_\Sigma^!\mathcal G(V)\simeq F_\epsilon^*V.
\]
The adjunction morphisms
\[
i_{\alpha*}i_\alpha^!\mathcal G(V)\longrightarrow
\mathcal G(V),\qquad \alpha=P,\Sigma,
\]
therefore induce maps
\[
\varphi_P:S_P(V)\longrightarrow H^2(S,\jjj{V}),\qquad
\varphi_\Sigma:S_\Sigma(V)\longrightarrow H^2(S,\jjj{V}).
\]
We use the fundamental classes with the orientations
specified above.
\end{definition}

\begin{rem}
Intrinsically, the source of each support map consists of
coefficient sections tensored with the normal orientation
line. Relabelling transports this line as well: a reflection
reverses the normal orientation of the real pentagon and
contributes the sign $-1$ to the coefficient identification.

Transport along the homotopy of Lemma~\ref{lem:displaced-disk}
makes the maps independent of the sufficiently small
displacement with the prescribed signs.
\end{rem}

\begin{prop}[Two-dimensional shrinking]
\label{prop:two-dimensional-shrinking}
The map
\[
\varphi_{P,\Sigma}=\varphi_P+\varphi_\Sigma:
S_P(V)\oplus S_\Sigma(V)\xrightarrow{\sim}H^2(S,\jjj{V})
\]
is an isomorphism, natural in $V$.
\end{prop}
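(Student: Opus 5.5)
Since both sides have dimension $2\operatorname{rank}V$ (Corollary~\ref{cor:global-concentration} and Definition~\ref{def:disk-supported-maps}), it suffices to prove injectivity, or dually surjectivity. Naturality in $V$ is immediate: the support maps come from adjunction morphisms, and these are functorial in $V$. Both $S_P$ and $S_\Sigma$ are exact functors of $V$, and so is $H^2(S,\jjj{V})$, since all other cohomology vanishes. Every unipotent $V$ is an iterated extension of constant systems $\mathbb Q_U$. By the five lemma, it therefore suffices to treat $V=\mathbb Q$, where the claim is that the two fundamental classes $[P]$ and $[F_\epsilon(\Sigma)]$ form a basis of the two-dimensional space $H^2(S,\jjj{\mathbb Q})$.

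For $V=\mathbb Q$ I will detect the two classes by a nondegenerate pairing. Verdier duality exchanges the two boundary conditions (Lemma~\ref{lem:mixed-extension-locality}), so $H^2(S,\jjj{\mathbb Q})$ is dual to the corresponding group with $!$ along $D_*$ and $*$ along $D_!$. Concretely, this is Borel--Moore homology $H_2(Y\setminus D_!,\ D_!)$ paired against $H_2(X\setminus D_*,\ D_*)$, that is, relative cycles in $X$ with boundary on $D$. On the dual side, two natural relative cycles are available: $P$ itself, which lies in $X$ with $\partial P\subset D$, and $F_\epsilon(\Sigma)$, which by Lemma~\ref{lem:displaced-disk} is a map of pairs $(\Sigma,\partial\Sigma)\to(X,D)$. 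I will compute the $2\times 2$ intersection matrix between the support classes $\varphi_P,\varphi_\Sigma$ and these dual relative cycles. Equivalently, I will pair the support classes against $P^{\mathrm{opp}}$ and a displaced $P$, chosen so that the intersections are transverse inside $U$.

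The expected intersections are as follows. $P$ meets the $\Sigma$-type cycles only where real projections overlap, and this overlap is empty by Lemma~\ref{lem:displaced-disk}. The displaced disk meets $P^{\mathrm{opp}}$ at exactly the single nondegenerate zero of the outward vector field, with index $1$, giving local intersection $\pm1$. For the diagonal entry of $P$, I will pair it with a small imaginary displacement of itself tangent to $D_*$ at the boundary. This gives the self-intersection of a totally real disk pushed by an inward (or outward) field with one zero, again $\pm1$. The resulting matrix is triangular with unit diagonal, so it is invertible, which proves the statement for $V=\mathbb Q$.

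The main obstacle is making the duality pairing rigorous. I must check that the relative cycles, whose boundaries lie on $D_*$, actually define classes in the Verdier-dual group, and that the local intersection at the single zero is $\pm1$ for the chosen orientations, including the normal signs prescribed along $D_!$. I would first try to compute everything in the chart $(x,y)$ around $P^{\mathrm{opp}}$. If the self-pairing of $P$ proves delicate, a fallback is to compare with one-dimensional shrinking for $p_3$ (Proposition~\ref{prop:forgetful-pushforward}). Fibered over $t\in(0,1]$, $P$ is the family of intervals $[0,1]$ in the fibers, so $\varphi_P$ corresponds to $\iota(E)$-type interval classes. The displaced disk instead contributes a lifted loop whose image under $\pi$ in $Q$ is nonzero. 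Together these two observations give triangularity in the presentation $\vartheta_\ell$.
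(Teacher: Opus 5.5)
Your reduction to $V=\Q_U$ (exactness of the three functors plus the five lemma on a unipotent filtration) matches the paper. So does your first test cycle: after duality the support classes become $[P]$ and $z_\Sigma=F_{\epsilon*}[\Sigma,\partial\Sigma]$ in $H_2(X,D;\Q)\simeq H^2(S,\jjj{\Q_U})$, and pairing with $[P^{\mathrm{opp}}]$ gives $0$ and $\pm1$. Note, however, that $H_2(X,D)$ is not the dual side. It is where the classes you are testing live; also $Y\setminus D_!=X\setminus D_*=U$, so your ``Borel--Moore'' groups are garbled. The dual group is $H_2(Y,D_!\cap Y;\Q)$: relative cycles avoiding $D_*$ with boundary on $D_!$, such as $P^{\mathrm{opp}}$.

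Your second test cycle is where the argument breaks. A small displacement of $P$ keeping its boundary on $D_*$ is again a relative cycle of $(X,D)$ representing $[P]$ itself, not an element of $H_2(Y,D_!\cap Y)$. No small displacement of $P$ can be such an element, since $P\cap D_!=\varnothing$.

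The intersection number of two relative cycles with boundary on the same divisor $D$ is not a homological invariant; it changes as the boundary is pushed along $D$. So the ``self-intersection'' $\pm1$ cannot certify $[P]\neq0$. The claimed vanishing of $z_\Sigma$ against the displaced $P$ is equally uncontrolled, because $\partial\Sigma$ and $\partial P$ share the segments of $D_*(\mathbb R)$. As it stands you have only shown $z_\Sigma\notin\Q[P]$.

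The paper supplies the missing fact by a boundary argument. Each component of $D$ is an affine line, since each $D_*$-component meets $D_!$ exactly once, and their incidence graph is a pentagon. Hence $H_1(D;\Q)=\Q$, generated by $\partial[P]$, so $[P]\neq0$. With the $P^{\mathrm{opp}}$ pairing and $\dim H_2(X,D)=2$, the two classes then form a basis.

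Two other repairs are available:
\begin{itemize}
\item The relabelling $(1,2,3,4,5)\mapsto(1,3,5,2,4)$ exchanges $D_*$ with $D_!$ and $P$ with $P^{\mathrm{opp}}$. The image of $z_\Sigma$ is therefore a legitimate dual cycle with intersection number $\pm1$ against $[P]$.
\item Use the first half of your fallback: $\varphi_P=\varphi_I\iota\alpha$ with all three maps injective. This is the main-pentagon compatibility lemma, whose proof does not use the proposition.
\end{itemize}
The second half of your fallback, $\pi\varphi_I^{-1}\varphi_\Sigma\neq0$, is asserted without argument. In the paper it is the long appendix computation of the quotient coordinate $h_Q$, and it is unnecessary once $z_\Sigma\cdot[P^{\mathrm{opp}}]=\pm1$ is known.
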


\begin{proof}
First take $V=\mathbb Q_U$. Using the complex orientation
and exchanging the two extension conditions, Poincar\'e--Verdier
duality gives
\[
H^2(S,\jjj{\mathbb Q_U})\simeq H_2(X,D;\mathbb Q).
\]
The two supported classes become
\[
[P],\qquad
z_\Sigma=F_{\epsilon*}[\Sigma,\partial\Sigma].
\]

Pair with $[P^{\mathrm{opp}}]\in H_2(Y,D_!\cap Y;\mathbb Q)$.
Disjointness gives
\[
[P]\cdot[P^{\mathrm{opp}}]=0.
\]
The displaced disk meets $P^{\mathrm{opp}}$ at the unique
zero of $v|_{P^{\mathrm{opp}}}$, of index $1$.
In complex dimension two the intersection sign of an
imaginary normal graph is the negative of this index,
so the orientations induced by $\eta$ give
\[
z_\Sigma\cdot[P^{\mathrm{opp}}]=-1.
\]
Each component of $D$ is an affine line, obtained by
removing its unique intersection with $D_!$, and their
incidence graph is a pentagon. Hence
$H_1(D;\mathbb Q)=\mathbb Q$, generated by $\partial[P]$.
In particular, $[P]\ne0$, so $[P]$ and $z_\Sigma$
are independent. Since $H_2(X,D;\mathbb Q)$ has
dimension two, they form a basis.

By Corollary~\ref{cor:global-concentration}, the functor
$V\mapsto H^2(S,\jjj{V})$ is exact on unipotent local systems. The functors $S_P$ and
$S_\Sigma$ are exact because the interiors of the
two disks are contractible. Naturality of the
adjunction maps and induction on a unipotent filtration
now prove the proposition.
\end{proof}

Put
\[
L_P(V)=\operatorname{im}\varphi_P,\qquad
L_\Sigma(V)=\operatorname{im}\varphi_\Sigma.
\]
The proposition gives the Betti decomposition
\[
H^2(S,\jjj{V})=L_P(V)\oplus L_\Sigma(V).
\]

\subsection{Compatibility with successive shrinking}
\label{sec:betti-comparison}

We compare $\varphi_{P,\Sigma}$ with the Betti presentation
$\varphi_I\vartheta_\ell:E\oplus Q\xrightarrow{\sim}H^2(S,\jjj{V})$
of Subsection~\ref{subsec:projection-vanishing-cycles}.
The main-pentagon compatibility below is the part used to
prove the geometric pentagram relations. The second column
completes the comparison of the two Betti presentations;
its calculation is given separately in
Appendix~\ref{app:betti-comparison}.

For a dihedral order $\tau$, denote by $P_\tau$ the
closure of its real chamber; thus $P^{\mathrm{opp}}=P_{13524}$.
Choose $q_c\in F_\epsilon((P^{\mathrm{opp}})^\circ)$
and put $V_b=V_{q_c}$. Evaluation at $q_c$
identifies the flat sections on $\Sigma^\circ$ with $V_b$.
Transports within the displaced disk are path-independent.

Retain the nearby-fiber coefficient space $V_f$ and the
identifications
\[
E=V_f,\qquad Q=V_f.
\]
\begin{lemma}
\label{lem:main-pentagon-compatibility}
The component shrinking map
$\alpha:S_P(V)\xrightarrow{\sim}E$ satisfies
\[
\varphi_P=\varphi_I\iota\alpha.
\]
\end{lemma}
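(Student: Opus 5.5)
The idea is to deform the main pentagon $P$ onto the special fiber $Z=p^{-1}(1)$, following the fibration $p=p_3$ over the real interval $I=(0,1]$, and to compare supported classes.

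First I would make $\alpha$ explicit. In the coordinates $(0,\infty,1,x,y)$ with $t=1-y/x$, the chamber $P^\circ=\{0<y<x<1\}$ fibers over $t\in(0,1)$. Its fiber over $t_0$ is the real interval $x\in(0,1)$ with $y=(1-t_0)x$. In the nearby-fiber coordinate of Lemma~\ref{lem:relative-fiber-coordinates}, this is exactly the short interval joining the two $*$-points $0$ and $1$. As $t_0\to1$ the fiber interval degenerates to the real interval on $C_*=D_{15}$ joining its two external points, avoiding the node $o$. I would therefore define $\alpha$ as the composite of two identifications. The first is the restriction of flat sections from $P^\circ$ to one fiber interval, which is contractible; it is then transported to $V_f$ using the retained paths. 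The second is $V_f=E$ through the interval class.

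Next I would factor $\varphi_P$ through the projection. The support $P^\circ\subset Y$ maps properly onto $I$ (its sides lie in $D_*$). So the adjunction morphism $i_{P*}i_P^!\mathcal G(V)\to\mathcal G(V)$ pushes forward under $p_*$ to a map into $\mathcal F$ that is supported on $I$. The statement then follows from naturality of support maps in two steps. First, taking cohomology supported on $(0,1]$ recovers $\varphi_I$, since $\Phi_{1-t}(\mathcal F)=H^0_{(0,1]}(\mathbb C^*,\mathcal F)$ by the shrinking proposition. Second, the fibrewise class is the relative interval class, whose image in $\Phi$ must be identified with $\iota(e)$.

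The key local point is the second step. I would argue as follows: since $P$ misses $D_!$, near $t=1$ the chamber lies in a product neighbourhood of the open component $C_*\setminus\{o\}$, away from $C_!$. Restricting the supported section of $\mathscr P$ to $k_!k^*\mathscr P$ gives a class in the image of $E\to\Phi$, and that map is $\iota$ in the lower sequence. This is consistent with the earlier Betti presentation. If one took only the nearby-fiber interval, one would get $a(v)\in\Psi$, and $\can a=\iota\delta_*$. The two-dimensional chamber, however, contributes the full sweep over $I$ ending on $C_*$, and this is the cohomology-with-supports version of $\iota$, without the factor $\delta_*$. Concretely, I would check this by comparing the $*$-extension $j_{**!}$ description on $C_*$ with supports on the closed real interval of $C_*$.

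The main obstacle is orientation and sign bookkeeping. This includes:
\begin{itemize}
\item the orientation of $P$ via $\eta$ versus the product orientation (fiber interval)$\times$(base interval $I$);
\item the Milnor/variation sign conventions in the identification of $\mathscr P$ with $\mathscr N$ on $C_*^\circ$;
\item the normalization that the interval class takes the value $+v$ on $g_1$.
\end{itemize}
I would fix these by computing in the local chart $f=u$ near a point of $C_*^\circ$. There $\eta$ restricts to $\pm\,dx\wedge dt$ up to a positive factor, which determines the sign. I would then choose the normalization of $\alpha$ so that the identity holds exactly.
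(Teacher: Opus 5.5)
Your proposal follows essentially the paper's argument: $y=(1-t)x$ identifies $P^\circ$ with $(0,1)_t\times(0,1)_x$, the support map of $P$ is the composite of the fiber-interval and base-interval support maps, the $x$-interval specializes at $t=1$ to the interval on $C_*$ avoiding the node (giving $\iota\alpha$ rather than a class $\can a(\cdot)$), and outer shrinking finishes. Two small corrections: the sign you leave open is already fixed by $\eta=dt\wedge dx/\bigl(t(1-t)x(1-x)\bigr)$, which is positive on the square, so $\alpha$ must not be renormalized; and $P^\circ\to(0,1)$ is not proper, but you only need $P^\circ$ closed in $Y$ with $p(P^\circ)\subset(0,1]$.
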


\begin{proof}
The coordinates $y=(1-t)x$ identify $P^\circ$ with
$(0,1)_t\times(0,1)_x$, and
\[
\eta=\frac{dt\wedge dx}{t(1-t)x(1-x)}.
\]
The support map of this product is the composition of
the interval support maps, with these normal orientations.
At $t=1$ the $x$-interval lies on $C_*$ and avoids the node;
its specialization is $\iota\alpha$. Outer shrinking
therefore gives the equality.
\end{proof}

For the second column, use $S_\Sigma(V)=V_b$ and the
coefficient paths of Appendix~\ref{app:betti-comparison}. Let
\[
h_I,h_A,h_B,h_Q:V_b\longrightarrow V_f
\]
be its four parallel transports, viewed as maps to $E$
for the first three and to $Q$ for the fourth. The
operators $T_1$ (monodromy about $x=1$) and $T_*$ act
through the same identification with $V_f$.

\begin{prop}
\label{prop:comparison-betti-presentations}
With the stated coefficient paths and orientations,
\[
\vartheta_\ell^{-1}\varphi_I^{-1}
\varphi_{P,\Sigma}
=
\begin{pmatrix}
\alpha&K_\Sigma\\
0&h_Q
\end{pmatrix},
\]
where
\[
K_\Sigma
=
h_I+h_A+T_*h_B+T_*(T_1-1)h_Q.
\]
\end{prop}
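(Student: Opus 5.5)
The comparison matrix has two columns, and I would treat them separately. The first column is already Lemma~\ref{lem:main-pentagon-compatibility}: since $\varphi_P=\varphi_I\iota\alpha$ and $\vartheta_\ell(e,0)=\iota(e)$, we get $\vartheta_\ell^{-1}\varphi_I^{-1}\varphi_P=(\alpha,0)^{T}$. All the work is in the second column. Here I would compute $\varphi_I^{-1}\varphi_\Sigma(v)$ for $v\in V_b$ as an element of $\Phi$. I would write it in the quotient coordinates of Corollary~\ref{cor:vanishing-cycle-pushout}, and then read off $(e,q)$ with $\vartheta_\ell(e,q)=\iota(e)+\can\ell(q)$. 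Using $\can(z,q)=(\delta_*z,q)$ and $\iota(e)=(e,0)$, the class $[(z,q,e)]$ has coordinates $(e+\delta_*z,q)$.

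\emph{Step 1: degenerate the displaced disk.} I would deform $F_\epsilon(\Sigma)$, through the normal-graph homotopy of Lemma~\ref{lem:displaced-disk}, into a family fibred over a path in the $t$-line ending near $t=1$. The goal is to write its support map as outer shrinking applied to a family of relative $1$-cycles in the fibres of $p=p_3$. Using the table in Lemma~\ref{lem:eleven-pentagon-disk}, the eleven chambers split according to their images in $\Ms{4}(\mathbb R)$. Only the chambers projecting to $(0,1)_t$ contribute to $\Phi_{1-t}(\mathcal F)$ after shrinking along $I=(0,1]$. The others are swept off through the $!$-fibre at $t=\infty$ or the $*$-fibre at $t=0$, and there they contribute nothing to the $H^0_{(0,1]}$-description.

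\emph{Step 2: decompose the fibre cycle.} Over $t$ near $1$, the surviving fibre cycle in the nearby fibre with points $0,1,\lambda,\infty$ is a sum of explicit pieces. I expect the following:
\begin{itemize}
\item an interval from $0$ to $1$ traversed via one side, contributing $h_I$;
\item two further intervals $A,B$ between the $*$-points, one of which passes around the node region and so acquires the normal monodromy $T_*$ (giving $h_A+T_*h_B$);
\item a relative loop around the $!$-points, which restricts to $C_!$ as $q=h_Q(v)$.
\end{itemize}
The Betti cocycle coordinates of Lemma~\ref{lem:relative-fiber-coordinates} then place this class in $\Psi$ or directly in $\Phi$. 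The interval pieces map into $\iota(E)$, and the loop piece maps to $\can\ell(h_Q v)$ plus a correction in $\iota(E)$. That correction arises because the actual loop differs from the chosen lift $\ell$ by an interval that winds once around $x=1$ before crossing the $D_{15}$ side. By the remark on changing $\ell$ by $ah$, this produces $\iota\delta_*(T_1-1)h_Q$ adjusted by $T_*$, which I would match to the term $T_*(T_1-1)h_Q$.

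\emph{Step 3: signs and paths.} I would check the orientation conventions: $\eta$ in the coordinates $(t,x)$, outward normals along the $D_!$ intervals, and the intersection sign $-1$ from Proposition~\ref{prop:two-dimensional-shrinking}. These should give all four terms coefficient $+1$ with the parallel transports as defined in Appendix~\ref{app:betti-comparison}. By naturality in $V$ and exactness, it is enough to verify the formula on the cocycle level for a general fibre $V_f$. Alternatively, one can check it for $V=\mathbb Q_U$ via the intersection pairing with $[P^{\mathrm{opp}}]$ and the bottom-right entry, and then pin down the remaining terms by monodromy equivariance.

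The main obstacle is Step 2, specifically the bookkeeping of which chambers of $\Sigma$ collapse onto which intervals of the nearby fibre, and along which paths. The appearance of $T_*$ and of $T_1-1$ depends on exactly how the displaced $!$-edges pass around the node $o$ and the point $x=1$. I would first handle this by tracking the five attached chambers around $P^{\mathrm{opp}}$ on the real picture at $t$ slightly less than $1$, and then reading off the windings.
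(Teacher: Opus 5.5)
There is a genuine gap, and it is in Step~1. The chambers of $\Sigma$ over $t>1$ cannot be discarded. They are $P_{14325}$, $P_{13425}$, $P_{14235}$ and $P^{\mathrm{opp}}=P_{13524}$ itself: on $P^{\mathrm{opp}}$ one has $y/x<0$, so $t>1$. This is the part of the disk carrying the intersection with $[P^{\mathrm{opp}}]$ that makes $z_\Sigma$ independent of $[P]$.

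Nor can these chambers be swept off at the $!$-fibre $t=\infty$. The vanishing the paper uses is local at the $*$-point $t=0$. There $\mathcal F$ is a $*$-extension and the compact-support complex is contractible, so the part of the disk over $t<0$, folded into a neighbourhood of $t=0$, contributes only a coboundary. Near a $!$-point there is no such vanishing. Instead, the paper folds the part over $t>1$ onto $I$ through the upper and lower half-planes, using the rotations $H^\pm_s$ in $w=t/(1-t)$. The fibre trace of this part at $t_+=\tfrac32$ is $B_+-A_+$. The outer arc $B_+$ goes with the upper fold and the inner arc $A_+$ with the lower fold, as dictated by the bypass signs of the outward displacement. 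The two folds differ by the positive meridian about $t=1$, so $\mathcal T_+=M_1\mathcal T_-$. With the orientation reversal, the folded nearby trace is $\xi_{I,b}-M_1\xi_b+a(h_Ab)=a(e_b+h_Ab)-(M_1-1)\xi_b$, where $e_b=(h_I+h_B+(T_1-1)h_Q)b$. Since $M_1-1=\var\can$, this is where a $\can$-term comes from at all. Together with the quotient coordinate discussed below, the vanishing class is $\iota(e_b+h_Ab)-\can\xi_b$.

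Every nontrivial feature of $K_\Sigma$ comes from this fold, and your Step~2 attributions do not reproduce it. The three-chamber trace over $I$ gives only $\xi_{I,b}=c(h_Ib,-h_Qb)$. The transports $h_A$ and $h_B$ belong to arcs in fibres over $t>1$, and reach $t_0=\tfrac34$ only through the folding you discarded.

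The factor $T_*$ does not come from an interval acquiring normal monodromy near the node; it comes from $\can$. Indeed $\xi_b=c(-h_Bb-(T_1-1)h_Qb,-h_Qb)$. The term $h_B+(T_1-1)h_Q$ therefore enters twice: once through $e_b$, via $\xi_{I,b}-\xi_b=a(e_b)$, and once through $-\can\xi_b$ as $\delta_*(h_B+(T_1-1)h_Q)$. Then $1+\delta_*=T_*$. Your change-of-lift argument accounts only for the $\delta_*(T_1-1)h_Q$ part. The remaining $(T_1-1)h_Q$ is the gauge correction restoring $w_\lambda=0$ in $\xi_b$, entering through $e_b$.

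Moreover, a fibre class near $t=1$ determines only $\var$ of a vanishing class, and $\var$ need not be injective. The paper uses injectivity of $(\var,\pi)$ and computes $\pi$ separately from the signed relative normal cell on $C_!=D_{24}$, obtaining $-\rho\xi_b=h_Qb$. That in turn needs the continuation argument at the mixed node, showing that the same transport $h_Q$ occurs in $\xi_{I,b}$ and $\xi_b$, so that $\xi_{I,b}-\xi_b\in a(E)$.

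Finally, your fallback cannot work. Checking $V=\mathbb Q_U$ and invoking ``monodromy equivariance'' cannot pin down an operator expression in path-dependent transports; constant coefficients only give $\begin{pmatrix}1&3\\0&1\end{pmatrix}$.
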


\begin{proof}
The first column is Lemma~\ref{lem:main-pentagon-compatibility}.
The second-column calculation is given in
Appendix~\ref{app:betti-comparison}.
\end{proof}

\begin{cor}
For constant coefficients the comparison matrix is
\[
\begin{pmatrix}
1&3\\
0&1
\end{pmatrix}.
\]
The second column has boundary three times the boundary
of the main pentagon.
\end{cor}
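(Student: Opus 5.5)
The plan is to specialize Proposition~\ref{prop:comparison-betti-presentations} to $V=\mathbb Q_U$. For constant coefficients every monodromy operator is the identity, so $T_*=T_1=1$. Every parallel transport $h_I,h_A,h_B,h_Q$ is then the identity of $\mathbb Q$, once $V_b$ and $V_f$ are both identified with the constant fiber. The component shrinking map $\alpha$ is likewise the identity in these coordinates: Lemma~\ref{lem:main-pentagon-compatibility} identifies it with the composition of the two interval support maps. With the orientation fixed by $\eta$ and the normalization that the interval class has value $+v$ on $g_1$, this composition sends the constant section $1$ to the generator $1\in E$.

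Substituting these values into the matrix, the lower left entry is $0$ and the lower right entry is $h_Q=1$. The upper right entry is
\[
K_\Sigma=h_I+h_A+T_*h_B+T_*(T_1-1)h_Q=1+1+1+0=3 .
\]
This gives the displayed matrix. The one point to check is that the four transports carry no signs for constant coefficients. The orientation signs have already been absorbed into the paths and normal orientations fixed in Appendix~\ref{app:betti-comparison}, so each $h$ is a bare transport of a constant section and hence equals $1$.

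For the boundary assertion, I would use the duality of the proof of Proposition~\ref{prop:two-dimensional-shrinking}, which identifies $H^2(S,\jjj{\mathbb Q_U})$ with $H_2(X,D;\mathbb Q)$. Consider the connecting map $\partial:H_2(X,D)\to H_1(D)=\mathbb Q$, where $H_1(D)$ is generated by $\partial[P]$. Since $X$ is affine of dimension two with $H_1(X;\mathbb Q)=0$, the map $\partial$ detects the $E$-coordinate.

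To see this, first note that $\iota(E)$ corresponds to $[P]$, by Lemma~\ref{lem:main-pentagon-compatibility}. Next, the lifted loop $\can\ell(1)$ should have zero boundary for constant coefficients. In the formula for $c(0,q)$ the $z$-entries vanish because $R-1=0$, so the relative loop closes up and gives an absolute class.

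With these two facts, the second column $3[P]+\ell$ has boundary $3\,\partial[P]$. The main obstacle is justifying that $\partial(\can\ell(1))=0$, that is, that the lifted loop class lies in the image of $H_2(X)$. I would check this directly from the cocycle description in Lemma~\ref{lem:relative-fiber-coordinates}: with trivial monodromy the $!$-point terms are coboundaries, so the class is absolute.
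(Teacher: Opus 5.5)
Your matrix computation is the paper's: for $V=\mathbb Q_U$ all transports and monodromies are identities, so substitution gives $K_\Sigma=3$ and $h_Q=1$. The gap is in the boundary assertion, at the step you flag yourself, namely $\partial\varphi_I\can\ell(1)=0$. Your justification for it is wrong.

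For constant coefficients $c(0,q)=(0,0,0;0,q)$. Its $!$-point entry is not a coboundary, since coboundaries are $(0,0,0;v,v)$ with $w_\lambda=w_\infty$. If it were one, $\ell(q)$ would vanish, contradicting $\rho\ell(q)=q$. What actually vanishes are the meridian entries at the $*$-points. Dually, this says only that the fiber cycle is a loop around $\lambda$ with no endpoints at $0,1$.

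That is a statement about a $1$-cycle in a nearby fiber. You need the relative boundary in $D$ of the $2$-class obtained after $\can$ and outer shrinking, and the cocycle description does not control it. Outer shrinking produces classes supported over $(0,1]$, which can have boundary on the fiber $D_{12}\cup D_{45}\subset D_*$ over $t=0$; this is exactly how $[P]$ acquires its boundary. The fact $H_1(X;\mathbb Q)=0$ only makes $\partial$ surjective. It does not show that $\partial$ kills $\varphi_I\can\ell(Q)$, and that is the whole content of your claim that $\partial$ detects the $E$-coordinate.

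The missing step is a localization near $C_!$:
\begin{enumerate}
\item Since $T_*=1$, the diagram gives $\can a=\iota\delta_*=0$, so $\can\xi$ depends only on $\rho(\xi)\in Q$.
\item You may therefore replace $\ell(1)$ by the nearby lift of a compact loop $\gamma\subset C_!^\circ$ representing $1\in Q$. Your loop around $\lambda$ is such a lift: it specializes to a loop around the $!$-point $D_{24}\cap D_{35}$ of $C_!$.
\item On a tubular neighborhood of $\gamma$ the $!$-extension along $C_!$ is a product. The normal $\can$ map followed by forgetting support is then represented by a normal circle.
\item Hence $\varphi_I\can\ell(1)$ is the torus $S^1_{\mathrm{normal}}\times\gamma\subset U$, an absolute cycle. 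The second column $3[P]+\varphi_I\can\ell(1)$ therefore has boundary $3\,\partial[P]$.
\end{enumerate}
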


\begin{proof}
All four transports and both monodromies are identities.
Substitution in the preceding formula gives the matrix.
For constant coefficients, $T_*=1$, hence $\can a=0$.
Thus $\can(\xi)$ depends only on $\rho(\xi)\in Q$.
Choose a compact loop $\gamma\subset C_!^\circ$ whose
supported class represents $1\in Q$. Its lift to a nearby
fiber gives $\xi$ with $\rho(\xi)=1$, so
$\can\xi=\can\ell(1)$.
On a tubular neighborhood of $\gamma$, the normal
$!$-extension is a product. The normal $\can$ map,
followed by forgetting support, is represented by a normal
circle. Hence $\varphi_I\can\ell(1)$ is the absolute torus
cycle $S^1_{\mathrm{normal}}\times\gamma$ and has zero
relative boundary. The first basis vector is $[P]$;
the second column therefore has boundary $3\partial[P]$.
\end{proof}

\section{Pentagram relations}
\label{sec:pentagram-relations}

\subsection{The Hodge comparison for one projection}
\label{subsec:one-projection}

Fix $p=p_3$. The two Hodge comparisons give a solid path in
\eqref{eq:pentagram-diagram}.

\begin{definition}
Put
\[
\mathbb F=p_*\jjj{\mathbb V}[2],
\qquad \boldsymbol\Phi=\Phi_{1-t}(\mathbb F).
\]
Let $\mathbb L_*,\mathbb L_!$ be the normal nearby-cycle
variations of Subsection~\ref{subsec:projection-vanishing-cycles},
with the same determinations, and put
\[
\mathbb E_*=H^1(C_*,j_{**!}\mathbb L_*)(-1),
\qquad
\mathbb E_!=H^1(C_!,j_{*!!}\mathbb L_!).
\]
\end{definition}

\begin{prop}
\label{prop:one-projection-hodge-comparison}
The vanishing cycles fit into an exact sequence of
mixed Hodge--Tate structures
\begin{equation}
0\longrightarrow\mathbb E_*
 \xrightarrow{\,\jmath\,}\boldsymbol\Phi
 \xrightarrow{\,\pi\,}\mathbb E_!
 \longrightarrow0.
\label{eq:component-hodge-extension}
\end{equation}
The structure $H^2(S,\jjj{\mathbb V})$ is Hodge--Tate,
and outer shrinking
\[
\varphi_I:\Phi\xrightarrow{\sim}H^2(S,\jjj{V})
\]
is strictly compatible with weights. If
$(\mathbb E_*)_B$ is identified with $E$ by the rational
Tate generator, then
\begin{equation}
U_*:=\sum_{n\ge0}\frac{(\log T_*)^n}{(n+1)!}
       =\frac{T_*-1}{\log T_*},
\label{eq:normal-variation-unit}
\end{equation}
\begin{equation}
\jmath_B=\iota U_*.
\label{eq:normalized-component-inclusion}
\end{equation}
The quotient in \eqref{eq:normal-variation-unit} means
the finite power series displayed there.
\end{prop}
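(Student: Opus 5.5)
The plan is to lift the Betti diagram of Subsection~\ref{subsec:projection-vanishing-cycles} to mixed Hodge modules and then read off the Hodge statements from exactness.

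\emph{Lifting the lower row.} Since $\jjj{\mathbb V}[2]$ is a mixed Hodge module (the affine functors $a_!$ and $b_*$ preserve mixed Hodge modules, and Lemma~\ref{lem:mixed-extension-locality} gives perversity), the complexes $\mathscr N$ and $\mathscr P$ underlie mixed Hodge modules $\psi_f$ and $\phi_f$ on $Z$. The localization triangles for $C_!\hookrightarrow Z$ and $k:C_*\setminus\{o\}\hookrightarrow Z$ used in the Betti proof are triangles of mixed Hodge modules. Applying $H^0(Z,-)$ and using the concentration already established (all cohomology is in degree zero by shrinking) turns the lower row of that diagram into the exact sequence \eqref{eq:component-hodge-extension}. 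This needs two identifications. On $C_!$, $i^*(\can)$ is an isomorphism of mixed Hodge modules, so the quotient is $H^1(C_!,j_{*!!}\mathbb L_!)$ with no twist. On the open $*$-component, $\phi_f$ is $\psi_f$ only through $\operatorname{Var}$, which carries a Tate twist $(-1)$; this produces $\mathbb E_*=H^1(C_*,j_{**!}\mathbb L_*)(-1)$.

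\emph{Hodge--Tate property and strictness.} Both $\mathbb E_*$ and $\mathbb E_!$ are cohomologies of mixed Tate local systems on a three-punctured line with mixed extensions. By \cite{MarkarianHodge} they are Hodge--Tate, and extensions of Hodge--Tate structures are Hodge--Tate, so $\boldsymbol\Phi$ is Hodge--Tate. The isomorphism $\varphi_I$ of Proposition~\ref{prop:forgetful-pushforward} is a composition of canonical maps of mixed Hodge modules: shrinking is the forget-support map $H^0_{(0,1]}\to H^0$, realized through the $*$/$!$ extensions of the Shrinking proposition, followed by proper direct image $H^0(\mathbb P^1,\mathbb F)=H^2(S,\jjj{\mathbb V})$. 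A bijective morphism of mixed Hodge structures is strict, so $H^2(S,\jjj{\mathbb V})$ is also Hodge--Tate and $\varphi_I$ is strictly compatible with weights.

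\emph{Formula \eqref{eq:normalized-component-inclusion}.} This is the main obstacle. The Betti map $\iota$ was defined using classical $\var$, with $\var\circ\can=T_*-1$, whereas the Hodge map $\jmath$ uses the logarithmic variation $\operatorname{Var}$, with $\operatorname{Var}\circ\can=N=\log T_*$ in the normalization where the generator of $\mathbb Q(-1)$ is $2\pi i$ times the rational one. On the open $*$-component both variations identify $\psi_f$ with $\phi_f$, and they differ by the factor
\[
\var=\operatorname{Var}\circ\frac{T_*-1}{\log T_*}.
\]
This factor is invertible and is a finite series because $T_*$ is unipotent. I would check the identity on a one-variable model, a unipotent Jordan block along $u=0$, where it is the standard formula $\var=N\circ\sum N^n/(n+1)!\circ\ldots$. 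I would then verify that the rational Tate generator of $(-1)$ absorbs the $2\pi i$ factor. Transporting the identity through the $k_!$-extension and $H^0(C_*,-)$ gives $\jmath_B=\iota U_*$.

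The delicate points are the sign and the placement of $U_*$, that is, whether it should sit on the left or the right of $\iota$. Since $U_*$ commutes with $T_*$, and $\iota$ intertwines the $T_*$-actions, the two placements agree, which settles this.
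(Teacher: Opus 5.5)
Your lifting of the localization triangles to mixed Hodge modules matches the paper's proof, and so does the twist $(-1)$ coming from $\operatorname{Var}$ on the open $*$-component. The same holds for deducing that $\boldsymbol\Phi$ is Hodge--Tate from its two component structures, and for deriving $\jmath_B=\iota U_*$ from $\var=\operatorname{Var}\circ(T_*-1)/\log T_*$. The gap is the middle step.

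You claim that $\varphi_I$ is a composite of morphisms of mixed Hodge modules. You then obtain strictness, and the Hodge--Tate property of $H^2(S,\jjj{\mathbb V})$, from the fact that bijective morphisms of mixed Hodge structures are strict. But $\varphi_I$ is \emph{not} a morphism of mixed Hodge structures. Its first step identifies $\Phi_{1-t}$ with cohomology supported on the real interval $(0,1]$. That is a topological operation, not a functor on mixed Hodge modules. The Hodge structure on $\boldsymbol\Phi$ is a limit structure determined by the local parameter at $t=1$, not by the interval. The failure of $\varphi_I$ to preserve $F$ is exactly the outer Hodge defect $\mathcal G_{\mathrm{out}}$. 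Proposition~\ref{prop:outer-hodge-defect} computes it as $\mathscr R_{\mathbb F}(g_I)$ with $g_I=U(e_1)A(-e_0,-e_1)_Y$, and everything later in the paper rests on it. Already for constant coefficients, the Hodge column $\widehat{\mathcal S}=G_{\mathrm{out}}JG_{\mathrm{comp}}$ is $\binom{1}{-\zeta(2)}$ rather than $J$. So in Hodge-split frames, $\varphi_I$ sends the weight-four generator to itself plus a nonzero multiple of the weight-zero one. This is why the statement asserts only strict compatibility with weights, and why that property needs its own argument, as does the Hodge--Tate property of the target.

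The paper supplies that argument as follows.
\begin{itemize}
\item It shows that the smooth part of $\mathbb F$ is a variation with constant Tate weight-graded pieces. Smooth-fiber cohomology is concentrated in degree one and exact in the coefficients. For $\mathbb Q(n)$ its subquotients are $\mathbb Q(n)$ and $\mathbb Q(n-1)$, from the relative cohomology of $(\mathbb P^1\setminus\{0,1\},\{\lambda,\infty\})$. Induction on a unipotent filtration does the rest.
\item Then $\log M_1$ lowers this weight filtration by at least two. Hence the relative monodromy filtration at $t=1$, which is the weight filtration of the limit structure on $\boldsymbol\Phi$, coincides with the weight filtration of the variation.
\item With this input, the shrinking result of \cite[Section~1]{MarkarianDefect} gives both the Hodge--Tate property of $H^2(S,\jjj{\mathbb V})$ and the strict weight compatibility of $\varphi_I$.
\end{itemize}
The Hodge--Tate property of the target alone could also be obtained another way. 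One can induct on the weight filtration of $\mathbb V$, using the exactness in Corollary~\ref{cor:global-concentration} and the mixed Tate relative cohomology of Brown's dihedral pair in the constant case. Weight strictness of $\varphi_I$, however, cannot come from treating it as a Hodge morphism.
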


\begin{proof}
Logarithmic localization gives
\eqref{eq:component-hodge-extension}, with the twist from
normal variation. The component structures are Hodge--Tate by
\cite{MarkarianHodge}, using its dual $*!!$ construction
on $C_!$. The relation between classical and logarithmic
variation in \cite[Subsection~2.1]{MarkarianHodge} gives
\eqref{eq:normalized-component-inclusion}.

Smooth-fiber cohomology is concentrated in degree one
and is exact on the coefficient variations. For
$\mathbb Q(n)$ its constant Tate subquotients are
$\mathbb Q(n)$ and $\mathbb Q(n-1)$, from the relative
cohomology of
$(\mathbb P^1\setminus\{0,1\},\{\lambda,\infty\})$.
Exactness and induction on a unipotent filtration give
constant Tate weight-graded pieces for the smooth part of
$\mathbb F$. The logarithm of base monodromy lowers
weights by at least two, so the
relative monodromy filtration is the weight filtration.
The shrinking result of \cite[Section~1]{MarkarianDefect}
now gives the remaining assertions.
\end{proof}

\begin{definition}[Hodge defect]
For a Hodge--Tate structure $K$, let
\[
s_K^F:\operatorname{gr}^W K_{B,\mathbb C}
 \xrightarrow{\sim}K_{B,\mathbb C}
\]
be its Hodge splitting: on weight $2a$ it is the inverse
of $F^a\cap W_{2a}\to\operatorname{gr}_{2a}^W$.
For a strictly weight-compatible Betti isomorphism
$u:K_B\to L_B$, define
\[
u^F=s_L^F(\operatorname{gr}^W u)(s_K^F)^{-1},
\qquad \mathcal G_u=(u^F)^{-1}u.
\]
We complexify Betti maps without changing notation.
The map $u^F$ preserves both filtrations, and
$\mathcal G_u$ induces the identity on associated graded.
For outer shrinking, write
$\mathcal G_{\mathrm{out}}=\mathcal G_{\varphi_I}
=\mathcal G_I(\mathbb F|_{\mathbb C^*})$.
\end{definition}

\begin{definition}[Associator conventions]
Let $A=G_1^{-1}G_0$ be the associator for
\[
dG=\left(e_0\frac{dt}{t}+e_1\frac{dt}{t-1}\right)G,
\]
where $G_0(t)t^{-e_0}\to1$ at $0$ and
$G_1(t)(1-t)^{-e_1}\to1$ at $1$, along $(0,1)$.
If $A_{\mathrm H}$ denotes the associator in the
$dt/(1-t)$ convention of \cite{MarkarianHodge}, then
$A(e_0,e_1)=A_{\mathrm H}(e_0,-e_1)$, as in
\cite[Subsection~2.3]{MarkarianDefect}.
For a series $g$ with constant term $1$, write
\[
g=1+g_0e_0+g_1e_1,\qquad g_Y=1+g_1e_1.
\]
Thus $g_Y$ retains the constant term and the words
ending in $e_1$.
\end{definition}

\begin{prop}
\label{prop:outer-hodge-defect}
In the right transport action $\mathscr R_{\mathbb F}$
of \cite[Section~2]{MarkarianDefect}, the outer defect is
\begin{equation}
\mathcal G_{\mathrm{out,dR}}
 =\mathscr R_{\mathbb F}(g_I),
\qquad
g_I=U(e_1)A(-e_0,-e_1)_Y,
\quad
U(e_1)=\sum_{n\ge0}\frac{(2\pi i e_1)^n}{(n+1)!}.
\label{eq:outer-associator-defect}
\end{equation}
All series act by finite sums.
\end{prop}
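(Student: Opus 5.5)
The plan is to reduce the statement to the one-variable shrinking defect of \cite[Section~1]{MarkarianDefect}, applied to the perverse sheaf $\mathbb F$ on $\mathbb P^1$. By Proposition~\ref{prop:forgetful-pushforward}, $\mathbb F\simeq j_{\infty!}j_{0*}(\mathbb F|_{\mathbb C^*})$. By Proposition~\ref{prop:one-projection-hodge-comparison}, the restriction $\mathbb F|_{\M{4}}$ is a unipotent Hodge--Tate variation whose relative monodromy filtration at $t=1$ is the weight filtration. This places us exactly in the setting where outer shrinking $\varphi_I:\Phi_{1-t}(\mathbb F)\to H^0(\mathbb P^1,\mathbb F)$ has been computed. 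The argument therefore splits into three steps: recall the de Rham description of $\mathbb F|_{\M{4}}$ as a logarithmic connection; compute $\mathcal G_{\varphi_I}$ by transport; and match conventions.

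\textbf{Step 1: the connection.} Write the Deligne extension of $\mathbb F|_{\M{4}}$ as $\nabla=d-(N_0\,dt/t+N_1\,dt/(t-1))$, with $N_0,N_1$ nilpotent and lowering the Hodge grading. Through the right transport action $\mathscr R_{\mathbb F}$, these are the images of $e_0,e_1$. Then $F$-splittings at the two ends are given by the canonical horizontal sections $G_0,G_1$ normalized as in the associator conventions. A series in $e_0,e_1$ acts by a finite sum because the $N$'s are nilpotent on a finite weight range.

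\textbf{Step 2: computing the defect.} Following the proof of \cite[Section~1]{MarkarianDefect}, write the Hodge class of $\Phi_{1-t}$ through the limit mixed Hodge structure at $1$, using $G_1$. The target $H^0(\mathbb P^1,j_{\infty!}j_{0*})$ is computed through $G_0$, with the $*$-condition at $0$ killing the $e_0$-terminal part. The interval $(0,1]$ then transports between the two via $G_1^{-1}G_0=A$. The $!$-extension at $\infty$ together with the $*$-extension at $0$ means that only words ending in $e_1$ survive, which produces the projection $(\cdot)_Y$. The factor $U(e_1)$ arises from converting the classical variation $T_1-1$ into the logarithmic $\operatorname{Var}$: since $T_1=\exp(2\pi i N_1)$, we have $(T_1-1)/\log T_1=U(e_1)$. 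This is the same conversion as in \eqref{eq:normal-variation-unit}.

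\textbf{Step 3: signs and the main obstacle.} The sign changes $A(-e_0,-e_1)$ should come from the right action, which reverses the composition order and effectively inverts the transport, combined with the identification $A(e_0,e_1)=A_{\mathrm H}(e_0,-e_1)$. I expect the main obstacle to be this bookkeeping. It requires checking that the right-action convention of $\mathscr R_{\mathbb F}$, the $dt/(t-1)$ versus $dt/(1-t)$ normalization, and the orientation of $I$ combine to give exactly $U(e_1)A(-e_0,-e_1)_Y$, and not an inverse or reversal. I would first verify the constant-coefficient rank-one Kummer cases, namely $\mathbb V$ equal to $K_t$ and $K_{1-t}$ pulled back. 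These fix the signs in weight two, where the defect is $\log$-type and the coefficient of $e_1$ in $U$ equals $\pi i$. After that, the general formula follows by functoriality of $\mathscr R_{\mathbb F}$ in the coefficient variation, via exactness from Corollary~\ref{cor:global-concentration} and induction on the unipotent filtration.
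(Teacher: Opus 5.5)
Your reduction matches the paper's. Its proof is a single application of the defect formula of \cite[Section~2]{MarkarianDefect} (Section~2, not the Section~1 you cite), with the hypotheses supplied by Proposition~\ref{prop:one-projection-hodge-comparison} and read in the right-action convention. The signs you worry about are inherited from the cited normalizations, not reconstructed. These are $A=A_{\mathrm H}(e_0,-e_1)$ as in \cite[Subsection~2.3]{MarkarianDefect}, and an action on the vanishing-cycle space routed through nearby cycles, $\mathscr R(we_1)=c\,\rev(w)(X,Y)\,v$, with $X=N_0$ and $Y=-N_1$ in the cited notation. Your Step~1 misdescribes this action in three ways. First, $e_0,e_1$ do not act on a fiber by $N_0,N_1$. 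Second, the routing through $c,v$ is what confines the action to $\mathbb C\cdot1$ plus words ending in $e_1$, and this is where the projection $(\cdot)_Y$ enters. Third, the residue you call $N_1$, taken in $dt/(t-1)$, is the paper's $Y$, which the cited normalization writes as $-N_1$.

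The genuine gap is Step~3, which you offer in place of that import; it would not work, for two reasons.

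\emph{Low-degree checks cannot fix the signs.} A check in Hodge weight two sees only degree one in $e_0,e_1$, where the normalized $A$ contributes nothing. It detects only the term $\pi i e_1$ of $U(e_1)$. Even degree two cannot separate $A(-e_0,-e_1)$ from $A(e_0,e_1)$: their degree-$n$ parts differ by $(-1)^n$. They first differ in degree three, in the $\zeta(3)$ and $\zeta(2,1)$ terms. This is exactly the ambiguity of dropping both the inversion and the reversal in your heuristic.

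\emph{Naturality cannot propagate a low-degree check to the whole series.} The true defect and $\mathscr R_{\mathbb F}(g)$, for every series $g$, are natural in $\mathbb V$ and compatible with sub- and quotient objects, so naturality cannot single out $g_I$. On a constant Tate piece all residues vanish, so $Y=0$ and $c_{\mathrm{out}}v_{\mathrm{out}}=0$. There only the constant term and the coefficient $[e_0e_1]g$ act. Every other coefficient of $g_I$, including those of $U(e_1)$, is seen only through extension data, in off-diagonal blocks that the graded pieces do not determine. Hence $g_I$ must be taken from the cited defect computation itself; once it is, your Steps~2--3 are unnecessary.
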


\begin{proof}
Apply the defect formula of \cite[Section~2]{MarkarianDefect}
using Proposition~\ref{prop:one-projection-hodge-comparison}
and the right-action convention.
\end{proof}

\subsection{The pentagram diagram}
\label{subsec:all-projections}

Put $r=\operatorname{rank}V$.
\begin{definition}[Corner and component spaces]
Number the sides $C_i$ of the real pentagon $P$ cyclically,
with indices in $\mathbb Z/5\mathbb Z$, as follows:
\[
(C_0,C_1,C_2,C_3,C_4)
 =(D_{15},D_{23},D_{45},D_{12},D_{34}).
\]
These are the $*$-components of the mixed fibers for
the projections
\[
(p_3,p_5,p_2,p_4,p_1),
\]
respectively.
Put $q_i=C_{i-1}\cap C_i$; in particular,
$q_0=D_{15}\cap D_{34}$.

At $q_0=(1,0)$ choose the unit tangent vectors in the
coordinates $(1-x,y)$, pointing into $P$, and transport
them to the other corners by cyclic relabelling.
Let $V^{\mathrm{tan}}_{q_i}$ be the resulting tangential
coefficient fiber, and put
\[
W_i=V^{\mathrm{tan}}_{q_i}\otimes\mathfrak o_i,
\]
where $\mathfrak o_i$ is the normal orientation line of
$P$ at that corner, understood from the interior.
We trivialize it by the orientation of
Subsection~\ref{subsec:two-dimensional-shrinking}.
Exchanging the two degree-one support maps acts on
$\mathfrak o_i$ by $-1$. All endpoint and reflection maps
include this sign.
Let $L_i$ be the normal nearby-cycle local system on
$C_i^\circ$ for the corresponding projection, and put
$E_i=H^1(C_i,j_{**!}L_i)$. The component inclusion and
outer shrinking give an embedding
\[
b_i:E_i\hookrightarrow H^2(S,\jjj{V}).
\]
The two endpoint shrinkings on $C_i$, with classical
Betti variation used to identify the endpoint fibers,
give isomorphisms
\[
a_i:W_i\xrightarrow{\sim}E_i,
\qquad
a_i':W_{i+1}\xrightarrow{\sim}E_i.
\]
Thus $\dim W_i=\dim E_i=r$ and $\dim H^2(S,\jjj{V})=2r$.

\end{definition}

\begin{definition}[Pentagram diagram]
The dashed arrow along $C_i$ is its tangential holonomy
\[
h_i:W_{i+1}\xrightarrow{\sim}W_i.
\]
These maps form the pentagram diagram \eqref{eq:pentagram-diagram}.

\newlength{\pentagramSideLength}
\setlength{\pentagramSideLength}{4.5cm}

\begin{equation}
\label{eq:pentagram-diagram}
\begin{tikzpicture}
\node (P0) at (90:0) {$H^2(S,\jjj{V})$};
\node (P1) at (90:.8506508084\pentagramSideLength) {$W_0$};
\node (P2) at (90+72:.8506508084\pentagramSideLength) {$W_1$};
\node (P3) at (90+2*72:.8506508084\pentagramSideLength) {$W_2$};
\node (P4) at (90+3*72:.8506508084\pentagramSideLength) {$W_3$};
\node (P5) at (90+4*72:.8506508084\pentagramSideLength) {$W_4$};
\node (P6) at (90+36:.525731112\pentagramSideLength) {$E_0$};
\node (P7) at (90+108:.525731112\pentagramSideLength) {$E_1$};
\node (P8) at (90+180:.525731112\pentagramSideLength) {$E_2$};
\node (P9) at (90+252:.525731112\pentagramSideLength) {$E_3$};
\node (P10) at (90+324:.525731112\pentagramSideLength) {$E_4$};
\path[commutative diagrams/.cd, every arrow, every label]
(P2) edge [dashed] (P1)
(P3) edge [dashed] (P2)
(P4) edge [dashed] (P3)
(P5) edge [dashed] (P4)
(P1) edge [dashed] (P5)
(P6) edge (P0)
(P7) edge (P0)
(P8) edge (P0)
(P9) edge (P0)
(P10) edge (P0)
(P1) edge (P10)
(P1) edge (P6)
(P2) edge (P6)
(P2) edge (P7)
(P3) edge (P7)
(P3) edge (P8)
(P4) edge (P8)
(P4) edge (P9)
(P5) edge (P9)
(P5) edge (P10);
\end{tikzpicture}
\end{equation}
\end{definition}

\begin{theorem}\label{thm:pentagram-geometric}
The five solid parallelograms in \eqref{eq:pentagram-diagram} commute:
\begin{equation}
b_i a_i=b_{i-1}a_{i-1}'\colon W_i\longrightarrow H^2(S,\jjj{V}).
\label{eq:solid-pentagram-relations}
\end{equation}
Their common maps are embeddings with image $L_P(V)$.
\end{theorem}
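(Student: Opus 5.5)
The plan is to show that both composites in \eqref{eq:solid-pentagram-relations} are the two-dimensional support map $\varphi_P$, precomposed with the identification $S_P(V)\simeq W_i$ given by evaluating flat sections near the corner $q_i$. The key input is Lemma~\ref{lem:main-pentagon-compatibility}: for $p=p_3$ it states $\varphi_P=\varphi_I\iota\alpha$, where $\alpha:S_P(V)\to E=E_0$ is the component shrinking map. By the definition of $b_0$, the right side is $b_0\alpha$. So it suffices to show that $\alpha$, read through the two endpoint identifications, equals both $a_0\circ\mathrm{ev}_{q_0}^{-1}$ and $a_0'\circ\mathrm{ev}_{q_1}^{-1}$. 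Here $\mathrm{ev}_{q}:S_P(V)\to W$ transports a section of $V$ on $P^\circ$ to the tangential fiber at the corner.

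\emph{Step one: one projection, both endpoints.} In the product coordinates $(t,x)$ of Lemma~\ref{lem:main-pentagon-compatibility}, $P^\circ=(0,1)_t\times(0,1)_x$, and the side $C_0=D_{15}$ is $\{t=1\}$. Its two endpoints are the corners $q_0$ and $q_1$. The map $\alpha$ sends a flat section $s$ to the class of the $x$-interval at $t=1$ with coefficient $s$. Shrinking that interval on $C_0$ to either endpoint (one-dimensional shrinking, Proposition on Shrinking) recovers the value of $s$ in the tangential fiber at that endpoint. The classical Betti variation identifies the normal nearby fiber along $C_0$ with the tangential fiber at the corner. This gives $\alpha=a_0\,\mathrm{ev}_{q_0}$ and $\alpha=a_0'\,\mathrm{ev}_{q_1}$. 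The content is that the section is flat on all of $P^\circ$, so the two tangential fibers are related by transport inside $P$. Along the edge this transport is the holonomy $h_0$ of the dashed arrow, so the relation $a_0=a_0'h_0$ is automatic.

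\emph{Step two: transport to all projections.} Cyclic relabelling carries $p_3$ to $p_5,p_2,p_4,p_1$, the side $C_0$ to $C_i$, and $P$ to itself. The cyclic order $12345$ preserves $D_*$ and $D_!$, hence $\jjj{V}$, and preserves the orientation $\eta$ up to a sign that must be tracked. Lemma~\ref{lem:main-pentagon-compatibility} then holds for each projection: $\varphi_P=b_i\alpha_i$ with $\alpha_i=a_i\mathrm{ev}_{q_i}=a_i'\mathrm{ev}_{q_{i+1}}$. Therefore
\[
b_ia_i=\varphi_P\,\mathrm{ev}_{q_i}^{-1}=b_{i-1}a_{i-1}',
\]
which is \eqref{eq:solid-pentagram-relations}. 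By Proposition~\ref{prop:two-dimensional-shrinking}, $\varphi_P$ is injective, so these common maps are embeddings with image $L_P(V)$. This step also uses that $\mathrm{ev}_{q_i}$ and $a_i$ are isomorphisms.

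\emph{Main obstacle.} I expect the hard part to be the signs and orientation lines. The cyclic rotation may act on $\eta$ and on the normal orientation lines $\mathfrak o_i$ by $\pm1$. The two endpoint shrinkings $a_i,a_i'$ approach $C_i$ from opposite ends, which exchanges the two degree-one support maps and so contributes the sign $-1$ on $\mathfrak o_i$. The classical variation at the two endpoints must also be compared. I would first check all of this for constant coefficients, where the images of $\varphi_P$ and of each $b_i a_i$ are multiples of $[P]$ and the boundary $\partial[P]$ generates $H_1(D)$; this fixes every sign. Naturality in $V$ then extends the conclusion, since all the maps are natural and the difference of the two composites is determined on the graded pieces by the constant case.
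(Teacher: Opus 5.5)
Your proposal is correct and follows the paper's own proof: the paper spreads a tangential corner value over $P$ by $\epsilon_i$, applies Lemma~\ref{lem:main-pentagon-compatibility} along each of the two sides through $q_i$ to get $b_ia_i=\varphi_P\epsilon_i=b_{i-1}a_{i-1}'$, and then uses injectivity of $\varphi_P$ from Proposition~\ref{prop:two-dimensional-shrinking}. Two small points. The sign bookkeeping you flag is already built into $W_i=V^{\mathrm{tan}}_{q_i}\otimes\mathfrak o_i$, and your constant-coefficient check can at most fix a universal sign, since a natural transformation vanishing on constant coefficients need not vanish on all unipotent $V$. Also, your side remark should read $a_0h_0=a_0'$, with $h_0:W_1\to W_0$.
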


\begin{proof}
Let $\epsilon_i:W_i\xrightarrow{\sim}S_P(V)$ spread a
tangential value over $P$. Applying
Lemma~\ref{lem:main-pentagon-compatibility} along either
side through $q_i$ gives
\[
b_i a_i=\varphi_P\epsilon_i
       =b_{i-1}a_{i-1}'.
\]
Now use the injectivity of $\varphi_P$ from
Proposition~\ref{prop:two-dimensional-shrinking}.
\end{proof}

\begin{definition}[Dihedral relabelling]
We call \eqref{eq:solid-pentagram-relations} the
\emph{pentagram relations}. The rotation $\tau=(13524)$
sends $C_i$ to $C_{i+1}$. The reflection $\sigma=(13)(45)$
fixes $q_0$ and exchanges $C_0=D_{15}$ with $C_{-1}=D_{34}$,
and hence $p_3$ with $p_1$.
Put
\[
\mathcal S=b_0a_0:W_0\longrightarrow H^2(S,\jjj{V}).
\]
Relabelling acts on the coefficient object as well.
For $V^\sigma=\sigma^*V$, let
\[
\begin{aligned}
R_\sigma^W&:W_0(V)\xrightarrow{\sim}W_0(V^\sigma),\\
R_\sigma^2&:H^2(S,\jjj{V})\xrightarrow{\sim}H^2(S,\jjj{V^\sigma})
\end{aligned}
\]
be the induced identifications, including the coefficient
paths, orientations, and variation conventions. Define
\[
(\sigma\cdot\mathcal S)_V
 =(R_\sigma^2)^{-1}\mathcal S_{V^\sigma}R_\sigma^W.
\]
No dihedral equivariant structure on $V$ is required.
\end{definition}

\begin{cor}
The relation at $q_0$ is
\begin{equation}
\mathcal S=\sigma\cdot\mathcal S.
\label{eq:reflection-pentagram-relation}
\end{equation}
The other four relations are its rotations.
\end{cor}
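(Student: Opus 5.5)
The plan is to rewrite the relation at $q_0$ from Theorem~\ref{thm:pentagram-geometric} using the reflection $\sigma$, and then obtain the other four relations by conjugating with the rotation $\tau$.

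\emph{Step 1: the relation at $q_0$.} For $i=0$, \eqref{eq:solid-pentagram-relations} reads
\[
b_0a_0=b_{-1}a_{-1}'\colon W_0\to H^2(S,\jjj{V}).
\]
The left side is $\mathcal S$ by definition, so the task is to identify $b_{-1}a_{-1}'$ with $\sigma\cdot\mathcal S$.

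\emph{Step 2: transporting the construction by $\sigma$.} The reflection $\sigma=(13)(45)$ fixes the dihedral order of $P$ up to reversal, so it preserves $D_*$ and $D_!$. Hence pullback along $\sigma$ gives the isomorphism $R^2_\sigma$ of mixed extensions. Since $\sigma$ exchanges $C_0=D_{15}$ with $C_{-1}=D_{34}$ and $p_3$ with $p_1$, transport of structure carries:
\begin{itemize}
\item the component space $E_0(V^\sigma)$ to $E_{-1}(V)$;
\item the component embedding $b_0$ for $V^\sigma$ to $b_{-1}$ for $V$;
\item the endpoint shrinking $a_0$ at $q_0$ along $C_0$ to the endpoint shrinking $a'_{-1}$ at $q_0=q_{(-1)+1}$ along $C_{-1}$.
\end{itemize}
Concretely, I would show that
\[
R^2_\sigma\,b_{-1}a'_{-1}=b_0(V^\sigma)\,a_0(V^\sigma)\,R^W_\sigma,
\]
that is, $b_{-1}a'_{-1}=(R^2_\sigma)^{-1}\mathcal S_{V^\sigma}R^W_\sigma=\sigma\cdot\mathcal S$. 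This is naturality of nearby cycles, outer shrinking and component shrinking under the isomorphism $\sigma$ of $S$. The tangential fiber at $q_0$ is sent to itself, because the two unit tangent vectors in the coordinates $(1-x,y)$ are exchanged and $\sigma$ preserves $q_0$.

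\emph{Step 3: orientations, the main obstacle.} The delicate point is matching orientations and signs. First, $\sigma$ reverses the normal orientation of $P$, which contributes $-1$ to $R^W_\sigma$ and to the identification of $S_P$. Second, the definition of $W_i$ states that exchanging the two degree-one support maps acts on $\mathfrak o_0$ by $-1$. I would check that these two signs cancel, so that the identity holds exactly rather than up to sign. The check uses the product description from Lemma~\ref{lem:main-pentagon-compatibility}: both composites equal $\varphi_P\epsilon_0$, and $\sigma$ transforms $\varphi_P\epsilon_0$ into $\varphi_P\epsilon_0$ for $V^\sigma$, with the orientation sign absorbed by the stated convention on $R^W_\sigma$. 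The classical-variation identifications of endpoint fibers also have to be checked for compatibility with $\sigma$; this holds because they are defined intrinsically by nearby cycles.

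\emph{Step 4: the other corners.} The rotation $\tau=(13524)$ preserves $D_*$ and $D_!$, sends $C_i$ to $C_{i+1}$, and hence sends $q_i$ to $q_{i+1}$. Transporting the $q_0$ relation by $\tau^k$, with the same naturality as in Step 2, gives the relation at $q_k$. Orientations are preserved by rotations, so no additional sign appears there.
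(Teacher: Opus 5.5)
Your proposal is correct and follows the paper's argument. Transport of structure under $\sigma$ carries the solid composite $b_0a_0$ at $q_0$ to $b_{-1}a'_{-1}$, so Theorem~\ref{thm:pentagram-geometric} becomes $\mathcal S=\sigma\cdot\mathcal S$; the $\tau$-rotations then give the other corners, exhausting the ten composable pairs. Your Step~3 sign bookkeeping is what the paper absorbs into its convention that the endpoint maps and $R^W_\sigma$ carry the orientation sign $-1$.
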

\begin{proof}
The reflection exchanges the two solid paths through $q_0$;
their rotations exhaust the ten composable pairs. Apply
Theorem~\ref{thm:pentagram-geometric}.
\end{proof}

\begin{rem}
The corner source is realized by iterated vanishing cycles,
retaining both normal Tate twists. Passing from the classical
Betti identifications to this realization uses the two variation
conversions \eqref{eq:normal-variation-unit}. They commute because
the normal monodromies commute. Thus the equations hold in
the Hodge comparison frames as well. The resulting
$2r\times r$ matrix transforms under a change of frames as
\[
\mathcal S\longmapsto C_{\mathrm{coh}}^{-1}\mathcal S C_W.
\]
The reflection identifications transform by the same rule.
For the completed regular representation, read the equations
on its finite unipotent quotients.
\end{rem}

\subsection{The associator formula}
\label{subsec:associator-formula}

\begin{definition}[Comparison coordinates]
We now use the corner and component identifications
supplied by logarithmic variation, as in the preceding remark.
Fix a corner on the $*$-component.
Write
\[
a:W_q\xrightarrow{\sim}(\mathbb E_*)_B,
\qquad
\jmath:(\mathbb E_*)_B\hookrightarrow\Phi,
\qquad
\varphi:\Phi\xrightarrow{\sim}H^2(S,\jjj{V}).
\]
Here $a,\varphi$ are component and outer shrinking,
$W_q$ retains the normal Tate twist, and $\jmath$ is
\eqref{eq:normalized-component-inclusion}.
The solid composite is $\mathcal S=\varphi\jmath a$.
Using their Hodge-compatible counterparts, put
\[
J=\jmath a^F:W_{q,\mathbb C}\hookrightarrow\Phi_{\mathbb C}.
\]
Choose Hodge frames adapted to this injection, so that
$J=\binom{1_r}{0}$.
\end{definition}

\begin{definition}[Transport actions]
Write, for $\nu\in\{\mathrm{comp},\mathrm{out}\}$,
\[
c_\nu:B_\nu\longrightarrow F_\nu,
\qquad v_\nu:F_\nu\longrightarrow B_\nu,
\qquad X_\nu\in\operatorname{End}(B_\nu),
\qquad Y_\nu=v_\nu c_\nu.
\]
Here $F_{\mathrm{comp}}=W_{q,\mathbb C}$,
$F_{\mathrm{out}}=\Phi_{\mathbb C}$, and $B_\nu$ is
the corresponding nearby-cycle space. These are the
logarithmic gluing coordinates of
\cite[Subsection~2.3]{MarkarianDefect}:
$X_\nu=N_{0,\nu}$ and $Y_\nu=-N_{1,\nu}$.
Let $\rev$ be the word-reversal anti-automorphism.
On $\mathbb C\cdot1\oplus
\mathbb C\langle\!\langle e_0,e_1\rangle\!\rangle e_1$,
define the right transport action, for every word $w$, by
\begin{equation}
R_\nu(we_1)
 =c_\nu\rev(w)(X_\nu,Y_\nu)v_\nu,
\qquad R_\nu(1)=1.
\label{eq:solid-transport-action}
\end{equation}
Thus $R_\nu(uv)=R_\nu(v)R_\nu(u)$.
\end{definition}

\begin{prop}
\label{prop:solid-composite-formula}
In the target frame transported by $\varphi^F$, the
matrix of the solid composite is
\begin{equation}
\widehat{\mathcal S}
 =(\varphi^F)^{-1}\mathcal S
 =G_{\rm out}\,J\,G_{\rm comp},
\qquad
G_{\rm out}=(\varphi^F)^{-1}\varphi,
\quad G_{\rm comp}=(a^F)^{-1}a.
\label{eq:solid-composite-defects}
\end{equation}
For $g_I$ of \eqref{eq:outer-associator-defect},
\begin{equation}
\widehat{\mathcal S}
 =R_{\mathrm{out}}(g_I)\,J\,R_{\mathrm{comp}}(g_I).
\label{eq:solid-composite-associator}
\end{equation}

\end{prop}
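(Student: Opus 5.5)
The proof has two steps. First comes a formal factorization of the Betti composite through the Hodge-compatible maps. Then we identify each defect with an associator action.

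For \eqref{eq:solid-composite-defects}, we write $\mathcal S=\varphi\jmath a$ and insert the identities $\varphi=\varphi^F G_{\rm out}$ and $a=a^F G_{\rm comp}$. These hold by the definition $\mathcal G_u=(u^F)^{-1}u$ of the Hodge defect, applied to $u=\varphi$ and $u=a$. Both maps are strictly weight compatible: $\varphi$ by Proposition~\ref{prop:one-projection-hodge-comparison}, and $a$ by the component version of the same shrinking statement from \cite[Section~1]{MarkarianDefect}. This gives
\[
(\varphi^F)^{-1}\mathcal S=G_{\rm out}\,\jmath\,a^F\,G_{\rm comp}=G_{\rm out}\,J\,G_{\rm comp}.
\]
The point to check is that $\jmath$ needs no defect factor of its own. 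By \eqref{eq:normalized-component-inclusion} and the exact sequence \eqref{eq:component-hodge-extension}, $\jmath$ is a morphism of mixed Hodge structures once the classical variation is converted by $U_*$. A morphism of Hodge--Tate structures commutes with the Hodge splittings, so $\jmath^F=\jmath$. Hence $J=\jmath a^F$ is already Hodge compatible, and the adapted frames with $J=\binom{1_r}{0}$ exist.

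For \eqref{eq:solid-composite-associator}, Proposition~\ref{prop:outer-hodge-defect} gives $G_{\rm out}=\mathscr R_{\mathbb F}(g_I)$ with $g_I=U(e_1)A(-e_0,-e_1)_Y$. We rewrite this in the transport notation \eqref{eq:solid-transport-action}. The logarithmic gluing data of $\Phi_{1-t}(\mathbb F)$ are the coordinates $c_{\rm out},v_{\rm out},X_{\rm out}=N_0$ and $Y_{\rm out}=-N_1$ of \cite[Subsection~2.3]{MarkarianDefect}. In those coordinates the action $\mathscr R_{\mathbb F}$ on a word $we_1$ is $c\,\rev(w)(X,Y)v$. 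The $Y$-projection $g_Y$ is exactly what makes this well defined, because only words ending in $e_1$ are acted on through $v\,c$. This gives $G_{\rm out}=R_{\rm out}(g_I)$.

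For the component factor we use the same defect formula, now applied to component shrinking on $C_*$. Component shrinking is an instance of the one-variable shrinking of \cite[Section~1]{MarkarianDefect}, applied to the $j_{**!}$-extension of $\mathbb L_*$ with the corner as the vanishing point and the Tate twist retained. The defect should therefore be given by the same series $g_I$ acting through $R_{\rm comp}$.

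The main obstacle is checking that the conventions line up in this component case. Three conventions must agree: the orientation of the interval, the sign of $e_1$ from the $dt/(1-t)$ versus $dt/(t-1)$ normalization, and the factor $U(e_1)$ from converting classical to logarithmic variation. My plan is to verify this by comparing with the outer case under the cyclic relabelling that carries the corner configuration to the one used for $p_3$. That comparison should show the same $U(e_1)$ normalization and the same reversal, with no extra antipode or sign.

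Finally, the right-action property $R_\nu(uv)=R_\nu(v)R_\nu(u)$ makes both factors well defined on finite unipotent quotients, since only finitely many words act nontrivially. Substituting both expressions into \eqref{eq:solid-composite-defects} gives \eqref{eq:solid-composite-associator}.
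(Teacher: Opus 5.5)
Your proposal is essentially the paper's proof. The first identity is the formal substitution $\varphi=\varphi^F G_{\rm out}$, $a=a^F G_{\rm comp}$, and $\jmath$ needs no defect factor because it is a Hodge morphism; the second identity comes from applying \eqref{eq:outer-associator-defect} to both the outer and the component shrinking. The cyclic-relabelling check you plan is unnecessary, and relabelling would not carry the component shrinking onto the outer one anyway: the defect formula of \cite{MarkarianDefect} is stated for arbitrary logarithmic gluing data, and the component case is the same configuration ($*$ at $0$, vanishing cycles at $1$, $!$ at $\infty$) with $c_{\mathrm{comp}}=c$ and $v_{\mathrm{comp}}=1$.
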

\begin{proof}
Since $\jmath$ is a Hodge morphism, substitution gives
\eqref{eq:solid-composite-defects}. Apply
\eqref{eq:outer-associator-defect} to its two factors.
Only the inclusion is used, so no lift of $\mathbb E_!$
is required.
\end{proof}

\begin{definition}[Residue coordinates]
With $\nabla=d-\Omega$, put
$r_{ij}=r_{ji}=\operatorname{Res}_{D_{ij}}\Omega$;
thus $r_{ij}$ is the negative of the connection residue.
We use the spherical presentation
\[
r_{ii}=0,\qquad \sum_jr_{ij}=0,\qquad
[r_{ij},r_{kl}]=0\quad
\text{if }\{i,j\}\cap\{k,l\}=\varnothing.
\]
All series below take values in the completed enveloping
algebra of this Lie algebra. Set
\begin{equation}
a=r_{12},\quad b=r_{23},\quad c=r_{34},\quad
d=r_{45},\quad e=r_{15}.
\label{eq:five-residue-generators}
\end{equation}
Eliminating the other five generators gives
\[
\begin{gathered}
r_{13}=d-a-b,\quad r_{14}=b-d-e,\quad r_{24}=e-b-c,\\
r_{25}=c-a-e,\quad r_{35}=a-c-d.
\end{gathered}
\]
The relations among $a,b,c,d,e$ are
\begin{equation}
\begin{gathered}
{}[a,c]=[a,d]=[b,d]=[b,e]=[c,e]=0,\\
[a,b]+[b,c]+[c,d]+[d,e]+[e,a]=0.
\end{gathered}
\label{eq:five-residue-relations}
\end{equation}
The reflection $\sigma=(13)(45)$ acts by
\begin{equation}
\sigma(a,b,c,d,e)=(b,a,e,d,c).
\label{eq:explicit-residue-reflection}
\end{equation}

\end{definition}

\begin{lemma}
In the universal logarithmic realization, the gluing
maps for $p_3$, in the order of the component and quotient
coordinates, are
\begin{equation}
\begin{gathered}
X=\begin{pmatrix}a&0\\1&d\end{pmatrix},\qquad
Y=\begin{pmatrix}e&c(d-a-b)\\0&e-b-c\end{pmatrix},\\
c_{\mathrm{out}}=\begin{pmatrix}e&0\\0&1\end{pmatrix},
\qquad
v_{\mathrm{out}}=\begin{pmatrix}1&c(d-a-b)\\0&e-b-c\end{pmatrix}.
\end{gathered}
\label{eq:explicit-outer-gluing}
\end{equation}
For the component at the corner $D_{15}\cap D_{34}$,
\begin{equation}
X_{\mathrm{comp}}=b,\qquad Y_{\mathrm{comp}}=c,
\qquad c_{\mathrm{comp}}=c,\qquad v_{\mathrm{comp}}=1,
\qquad J=\binom10.
\label{eq:explicit-component-gluing}
\end{equation}
Here $X=X_{\mathrm{out}}$ and $Y=Y_{\mathrm{out}}$.
\end{lemma}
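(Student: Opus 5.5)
We compute everything in the universal logarithmic realization: the regular representation of the completed enveloping algebra of the residue Lie algebra, with $\nabla=d-\Omega$ and residues $r_{ij}$. We read off the gluing data for $p_3$ from the explicit coordinates $(0,\infty,1,x,y)$ and $t=1-y/x$. The recipe is that of \cite[Subsection~2.3]{MarkarianDefect}: $X_\nu=N_{0,\nu}$ is the residue of the pushed-forward connection at $t=0$, and $Y_\nu=-N_{1,\nu}$ is the residue at $t=1$. The latter factors as $Y=v\,c$ through the nearby-cycle space $B$ at $t=1$.

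\emph{Step 1 (outer fiber basis).} On a nearby fiber with marked points $0,1,\lambda,\infty$ and conditions $*,*,!,!$, take the basis of $H^1$ given by Lemma~\ref{lem:relative-fiber-coordinates}. This is the interval class from $0$ to $1$ (the component coordinate) together with the lifted relative loop (the quotient coordinate). We write the Gauss--Manin connection in the de~Rham version of this basis: the interval class corresponds to $dx/(x(1-x))$-type logarithmic forms with poles at the $*$-points, and the quotient class to a form with residue at $\lambda$.

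\emph{Step 2 (the residue at $t=0$).} As $t\to0$, the fiber degenerates to $D_{12}\cup D_{45}$. The interval class stays on the $D_{12}$-side and acquires the residue $a=r_{12}$. The quotient class picks up $d=r_{45}$. The off-diagonal $1$ is the normalization of the Tate twist coupling the two, coming from the residue of $dt/t$ pairing the loop with the interval. This gives the lower-triangular $X$.

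\emph{Step 3 (the residue at $t=1$).} Here $Z=D_{15}\cup D_{24}$. On $C_*$ the normal residue is $e=r_{15}$, and on $C_!$ it is $r_{24}=e-b-c$. The off-diagonal entry $c\,r_{13}=c(d-a-b)$ arises from the extension of the $!$-component through the node, where the $*$-component sees $c=r_{34}$ at the corner and $r_{13}$ along the external points. We then factor $Y$ through $B$ using Corollary~\ref{cor:vanishing-cycle-pushout}: $c_{\mathrm{out}}$ is $\operatorname{Var}$-normalized can, equal to the normal residue $e$ on the component and $1$ on the quotient, and $v_{\mathrm{out}}$ is the remaining factor. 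The check that $v_{\mathrm{out}}c_{\mathrm{out}}=Y$ uses the commutation $[e,c]=0$ from \eqref{eq:five-residue-relations}.

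\emph{Step 4 (the component data).} On $C_*=D_{15}$, with the corner $q_0=D_{15}\cap D_{34}$, the one-dimensional shrinking has $X_{\mathrm{comp}}=b$ and $Y_{\mathrm{comp}}=c$. These are the residues restricted to $D_{15}$ at its two external points, after the identification $r_{23}\mapsto b$ valid modulo $r_{15}$. The factorization $c_{\mathrm{comp}}=c$, $v_{\mathrm{comp}}=1$ reflects the $*$-extension at the corner. Finally, $J=\binom10$ is the normalized inclusion \eqref{eq:normalized-component-inclusion} in frames adapted to $\jmath$.

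\emph{Main obstacle.} The hardest step is the off-diagonal entry of $Y$, together with the signs. The difficulty is tracking the relative-loop correction $(R-1)q$ of Lemma~\ref{lem:relative-fiber-coordinates} in logarithmic form, while keeping the sign conventions straight: $Y=-N_1$, the convention $r_{ij}=-$ (connection residue), and the $e_1$ sign. I would first verify the formula in the rank-one abelian quotient, where only the diagonal survives. I would then fix the off-diagonal term by requiring flatness $\nabla^2=0$ of the pushforward on $\M{4}$.
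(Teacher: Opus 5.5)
Your proposal has a genuine gap. It never computes the Gauss--Manin connection of the pushforward, so none of the entries of $X$ and $Y$ is actually derived; Steps~2 and~3 only state the answer with heuristic glosses. Even the diagonal entries need an argument: they arise as sums such as $r_{34}+r_{35}+r_{45}=r_{12}$ and $r_{13}+r_{15}+r_{35}=r_{24}$, identified through the spherical relations.

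You rightly flag the off-diagonal entries as the crux, but your fallback for them cannot work. The base $\M{4}$ is a curve, so $\nabla^2=0$ holds for every connection on it and imposes no condition at all. The rank-one check is also misconceived. For constant coefficients every residue vanishes, yet $X_{21}=1$ survives: it is the nontrivial unipotent monodromy at $t=0$ of the rank-two pushforward of $\mathbb Q$, as $\lambda$ circles $1$. In the commutative quotient of the residue algebra, both $1$ and $c(d-a-b)$ survive. In either reading ``only the diagonal survives'' is false, and the check would miss exactly the entries in question.

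The missing ingredient is the explicit relative logarithmic computation. Substitute $y=(1-t)x$ and $\lambda=(1-t)^{-1}$. The fiber residues at $x=0,1,\lambda$ are then $r_{23},r_{34},r_{35}$. The residue at infinity is $r_{13}=-s$, with $s=r_{23}+r_{34}+r_{35}$. The base part of $\Omega$ is $(r_{15}+r_{35})\,d\log(1-t)+r_{45}\,d\log t$.

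In the relative complex with peripheral terms $w_\lambda,w_\infty$, fix the gauge $w_\lambda=0$. The representatives are $\omega(z,q)=(sq-z)\,dx/x+z\,dx/(x-1)$ with $w_\infty=q$; the term $sq$ is the logarithmic form of the correction $(R-1)q$ you mention. The quotient coordinate is carried by the relative term at $\infty$, not by a pole at $\lambda$ as your Step~1 suggests. This matters, because the stated matrices depend on the frame.

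Differentiating in $\lambda$ produces the moving-boundary term $h=\omega(\partial_x)|_{x=\lambda}=sq/\lambda+z/(\lambda(\lambda-1))$. Restoring the gauge adds $h$ to $q'$ and $r_{34}h$ to $z'$. Both off-diagonal entries come from $h$:
\begin{itemize}
\item $z\,d\lambda/(\lambda(\lambda-1))=z\,d\log t$ gives $X_{21}=1$;
\item $r_{34}sq\,d\lambda/\lambda=-r_{34}sq\,d\log(1-t)$ gives $Y_{12}=r_{34}r_{13}=c(d-a-b)$.
\end{itemize}

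Your treatment of $c_{\mathrm{out}}$, $v_{\mathrm{out}}$ and of the component data agrees with the paper in outline, with two small corrections. First, the identity $v_{\mathrm{out}}c_{\mathrm{out}}=Y$ is plain matrix multiplication and does not use $[e,c]=0$. Second, on $D_{15}$ the normal coordinate $f=y/x$ gives the residue $r_{14}+r_{15}+r_{45}=r_{23}=b$ at $x=0$ exactly, not merely modulo $r_{15}$.
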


\begin{proof}
The relative logarithmic calculation is given in
Appendix~\ref{app:logarithmic-direct-image}.
\end{proof}

\begin{prop}[The explicit column]
\label{prop:explicit-pentagram-column}
For a series $A$ with constant term $1$, put
\[
A=1+A_0e_0+A_1e_1,
\qquad \mathscr F_A(u,v)=\rev(A_1)(u,v).
\]
Thus $\mathscr F_A$ deletes the final $e_1$ and reverses
the remaining word. The raw semi-holonomy composite is
\begin{equation}
S_A=
\begin{pmatrix}
1+e\,\mathscr F_A(X,Y)_{11}\\
\mathscr F_A(X,Y)_{21}
\end{pmatrix}
\bigl(1+c\mathscr F_A(b,c)\bigr).
\label{eq:explicit-pentagram-column}
\end{equation}
\end{prop}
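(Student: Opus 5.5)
The computation starts from Proposition~\ref{prop:solid-composite-formula}. We take the factorized expression \eqref{eq:solid-composite-associator}, $\widehat{\mathcal S}=R_{\mathrm{out}}(g)\,J\,R_{\mathrm{comp}}(g)$, with $g$ replaced by a general series $A$ with constant term $1$. The pure-power normalization plays no role at this stage. Only the part $A_Y=1+A_1e_1$ enters, because $R_\nu$ is defined only on $\mathbb C\cdot1\oplus\mathbb C\langle\!\langle e_0,e_1\rangle\!\rangle e_1$. Hence the ``raw semi-holonomy composite'' $S_A$ should be read as
\[
S_A=R_{\mathrm{out}}(A_Y)\,J\,R_{\mathrm{comp}}(A_Y).
\]
The claim is then a direct evaluation of the two factors using the explicit gluing data \eqref{eq:explicit-outer-gluing} and \eqref{eq:explicit-component-gluing}.

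First I evaluate $R_\nu$ on a general element. For $A_1=\sum_w \lambda_w w$, the definition \eqref{eq:solid-transport-action} and linearity give
\[
R_\nu(A_Y)=1+\sum_w\lambda_w\,c_\nu\,\rev(w)(X_\nu,Y_\nu)\,v_\nu
=1+c_\nu\,\mathscr F_A(X_\nu,Y_\nu)\,v_\nu .
\]
This holds because $\mathscr F_A=\rev(A_1)$ is exactly the series $\sum_w\lambda_w\rev(w)$. Finiteness of the sums is guaranteed by Proposition~\ref{prop:outer-hodge-defect} and unipotence, or one works on finite unipotent quotients as in the remark after the corollary. For the component, substitute $c_{\mathrm{comp}}=c$, $v_{\mathrm{comp}}=1$, $X_{\mathrm{comp}}=b$ and $Y_{\mathrm{comp}}=c$. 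This gives $R_{\mathrm{comp}}(A_Y)=1+c\,\mathscr F_A(b,c)$, which is the right-hand factor of \eqref{eq:explicit-pentagram-column}.

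For the outer factor, only its first column matters, since $J=\binom10$. So I compute $R_{\mathrm{out}}(A_Y)J=J+c_{\mathrm{out}}\mathscr F_A(X,Y)\,v_{\mathrm{out}}J$. From \eqref{eq:explicit-outer-gluing}, $v_{\mathrm{out}}J=\binom10$, because the first column of $v_{\mathrm{out}}$ is $\binom10$. Then $\mathscr F_A(X,Y)\binom10$ is the first column $\binom{\mathscr F_{11}}{\mathscr F_{21}}$. Left multiplication by $c_{\mathrm{out}}=\operatorname{diag}(e,1)$ multiplies the top entry by $e$ and leaves the bottom entry unchanged. Adding $J$ gives the column $\binom{1+e\,\mathscr F_A(X,Y)_{11}}{\mathscr F_A(X,Y)_{21}}$. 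Multiplying on the right by the scalar-valued (residue-algebra-valued) component factor yields \eqref{eq:explicit-pentagram-column}.

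The only delicate point is bookkeeping of order and conventions. The right action is an anti-homomorphism, $R_\nu(uv)=R_\nu(v)R_\nu(u)$. Matrix entries lie in the noncommutative completed enveloping algebra, so the factor $e$ must stay on the left of $\mathscr F_{11}$ and the component factor must stay on the right. I also need to check that the sign conventions ($Y_\nu=-N_{1,\nu}$, the $e_1$ sign switch between $A$ and $A_{\mathrm H}$) have already been absorbed into \eqref{eq:explicit-outer-gluing}, so that no further substitution $e_i\mapsto -e_i$ is needed. The statement uses $A$ itself rather than $A(-e_0,-e_1)$ or $U(e_1)$, so I would state explicitly that ``raw'' means $A$ is inserted directly in place of $g_I$. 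Beyond that the argument is a linear substitution.
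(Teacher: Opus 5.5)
Your proposal is correct and follows the paper's own proof. You substitute \eqref{eq:explicit-outer-gluing} and \eqref{eq:explicit-component-gluing} into the transport formula \eqref{eq:solid-transport-action}, reading $S_A$ as $R_{\mathrm{out}}(A_Y)\,J\,R_{\mathrm{comp}}(A_Y)$ with $A$ inserted directly, just as the paper does in the proof of Theorem~\ref{thm:pentagon-pentagram}.

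The one point the paper makes explicit, and you treat loosely, is convergence in the completed residue algebra. Since $X_{21}=1$ has degree zero, unipotence alone does not obviously make the sums finite. The paper observes that every nonzero $(i,j)$ entry of a word of length $m$ in $X,Y$ has residue degree $m+j-i$; this uses that $Y_{12}=c(d-a-b)$ has degree two. That count makes the substitution continuous, and it is also what justifies your finite-quotient argument.
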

\begin{proof}
Substitute \eqref{eq:explicit-outer-gluing} and
\eqref{eq:explicit-component-gluing} in the transport
formula \eqref{eq:solid-transport-action}.
Every nonzero $(i,j)$ entry of a word of length $m$ in
$X,Y$ has residue degree $m+j-i$. Thus the substitution
is continuous for the degree completions, despite
$X_{21}=1$.
\end{proof}

\begin{rem}
$\mathcal S$ denotes the intrinsic solid map,
$\widehat{\mathcal S}$ its Hodge comparison matrix, and
$S_A$ the raw associator column; the variation factors and
the sign change relating the last two are recorded in the
proof of Theorem~\ref{thm:pentagram-coefficients}.
The two entries of $S_A$ are denoted by $S_A^{(1)}$ and $S_A^{(2)}$.
They are $r\times r$ blocks in comparison coordinates. In the two-disk Betti
presentation, the supported map is
$\varphi_{P,\Sigma}^{-1}\varphi_P(v)=(v,0)$; passing to
the de Rham frames produces the two blocks above.
\end{rem}

\subsection{The two families of relations}
\label{subsec:coefficient-relations}

\begin{definition}
\label{def:pentagram-coefficient-relations}
Let $A\in\mathbb C\langle\!\langle e_0,e_1\rangle\!\rangle$
be group-like for the coproduct with primitive generators
$e_0,e_1$. We call it \emph{normalized} if its constant
term is $1$ and its linear terms vanish. Define $S_A$
by \eqref{eq:explicit-pentagram-column}.
The \emph{upper pentagram relations} are the coefficient
equations of
\begin{equation}
S_A^{(1)}(a,b,c,d,e)=S_A^{(1)}(b,a,e,d,c),
\label{eq:upper-pentagram-relations}
\end{equation}
and the \emph{lower pentagram relations} are those of
\begin{equation}
S_A^{(2)}(a,b,c,d,e)=S_A^{(2)}(b,a,e,d,c).
\label{eq:lower-pentagram-relations}
\end{equation}
Together they are the \emph{pentagram coefficient relations}.
No pentagon equation is part of this definition.
\end{definition}

\begin{lemma}
\label{lem:universal-graded-change}
The universal logarithmic frames of
\eqref{eq:explicit-outer-gluing} identify the graded
de Rham vanishing-cycle space with
$V_{\mathrm{dR}}(-2)\oplus V_{\mathrm{dR}}$.
After transporting the coefficient action by $\sigma$,
the corner and cohomology frame transitions are
\[
C_\sigma^W=\varepsilon\,1,\qquad
C_\sigma^2=\varepsilon\,1,
\qquad \varepsilon=-1.
\]
The sign is the exchange of the two normal orientations.
\end{lemma}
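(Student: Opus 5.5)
We argue in two parts: first the graded identification, then the two frame transitions under $\sigma$.

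\emph{Graded identification.} We read off the weights from \eqref{eq:explicit-outer-gluing}. The component coordinate carries $\mathbb E_*=H^1(C_*,j_{**!}\mathbb L_*)(-1)$. For constant coefficients this is the relative class of the interval joining the two $*$-points, of Tate type $\mathbb Q(-1)$. After the normal twist it becomes $V_{\mathrm{dR}}(-2)$. The quotient coordinate carries $\mathbb E_!=H^1(C_!,j_{*!!}\mathbb L_!)$. Its graded piece is the relative-loop class of the dual $*!!$ construction, of type $\mathbb Q(0)$. By exactness of both functors on unipotent filtrations (Corollary~\ref{cor:global-concentration}), it suffices to check $V=\mathbb Q_U$. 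We then induct on the Tate filtration of $V$. In \eqref{eq:explicit-outer-gluing}, $X_{21}=1$ is the only entry of residue degree $0$, and it raises degree by two between the blocks. This confirms that the universal frames are graded with the stated shifts.

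\emph{Frame transitions under $\sigma$.} On residues, $\sigma$ acts by \eqref{eq:explicit-residue-reflection}. It fixes the corner $q_0=D_{15}\cap D_{34}$ and exchanges $D_{15}$ with $D_{34}$, hence $p_3$ with $p_1$. The graded de Rham spaces $\operatorname{gr}^W W_0$ and $\operatorname{gr}^W H^2$ are canonically $V_{\mathrm{dR}}$-valued: the logarithmic frames are built from the coefficient fiber at the tangential base point together with the Tate generators. The transported frames can therefore differ from the original ones only through the orientation data, namely the two normal orientation lines $\mathfrak o$ at the corner and on $P$.

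For $W_0$, the corner source is an iterated vanishing-cycle space, and its orientation is the ordered pair of normal directions to $D_{15}$ and $D_{34}$. The reflection swaps this order. By the convention in the corner definition, exchanging the two degree-one support maps acts on $\mathfrak o_0$ by $-1$, so $C^W_\sigma=-1$.

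For $H^2$, the graded pieces are detected by $\varphi_P$ and by the complementary class. The reflection $\sigma$ reverses the orientation $\eta$ of $P$: in the coordinates $(1-x,y)$ at $q_0$ it swaps the two coordinates, and $\eta$ transforms by the Jacobian sign $-1$. This is the sign recorded in the remark after Definition~\ref{def:disk-supported-maps}. The same sign appears on the second graded piece, because the Tate generator $2\pi i$ of the normal twist is invariant under the holomorphic relabelling $\sigma$, and the complex orientation used for Verdier duality in Proposition~\ref{prop:two-dimensional-shrinking} is preserved. Hence $C^2_\sigma=-1$ on both blocks.

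\emph{Main obstacle.} The hard step is to show that no unipotent correction survives, i.e.\ that the transition matrices are exactly scalar rather than scalar plus nilpotent terms. We would handle this as follows. A frame transition between two Hodge-adapted logarithmic frames, both normalized at the same tangential base point, is a morphism of graded objects. The tangential base point at $q_0$ is $\sigma$-stable: it uses unit vectors in $(1-x,y)$, which $\sigma$ swaps. Its unipotent part is therefore the identity. For $H^2$, we would argue that the logarithmic frame of \eqref{eq:explicit-outer-gluing} is determined by $J=\binom{1}{0}$ together with the grading, and that both of these are $\sigma$-equivariant up to the common sign.
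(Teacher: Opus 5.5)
Your treatment of the corner transition $C^W_\sigma$ is fine and is essentially the paper's. The corner source is a single graded summand, so only a scalar can occur, and $\sigma$ swaps the inward normal coordinates $1-x,y$ at $q_0$, which gives $\varepsilon$.

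The gap is in $C^2_\sigma$, exactly at the step you call the main obstacle, and your proposed resolution does not work.

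\begin{itemize}
\item The two frames compared on $H^2(S,\jjj{V})$ come from the vanishing cycles at $t=1$ of two different projections: $p_3$, with components $D_{15},D_{24}$, and $p_1$, with $D_{34},D_{25}$. The tangential base point at $q_0$ normalizes the corner frame, not these.
\item Being a graded morphism does not kill the unipotent part. The summands are $V_{\mathrm{dR}}(-2)$ and $V_{\mathrm{dR}}$, so left multiplication by a degree-two element $B\in U(\mathfrak p_5)_2$ from the second summand into the first preserves the total grading and $W$. It is even nonzero on $\operatorname{gr}^W$.
\item $J$ and the grading fix only the first column. The common image $L_P(V)$ makes the transition upper triangular, but the lift of $\mathbb E_!$, which depends on the projection, is not determined.
\end{itemize}

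So your sentence ``the transported frames can differ only through the orientation data'' is exactly what needs proof.

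Your reason for the lower diagonal sign is also not an argument. Invariance of $2\pi i$ and preservation of the complex orientation would, if anything, suggest $+1$. For constant coefficients $-1$ is correct, but for a different reason: the second frame vector is a torus class at a double point of $D_!$, and $\sigma$ reverses the cyclic order of the five components of $D_!$. Constant coefficients never see $B$, however.

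The missing input is what the paper supplies, in two steps.

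\begin{enumerate}
\item Lemma~\ref{lem:graded-frame-bound} uses naturality under right multiplication on the truncated regular modules $R_N$. This shows each entry is left multiplication by an element of $\widehat U(\mathfrak p_5)$, so the grading only bounds the transition to $\begin{pmatrix}d_1&B\\0&d_2\end{pmatrix}$ with $d_i\in\mathbb C^*$ and $B\in U(\mathfrak p_5)_2$.
\item The entries are then fixed by the geometric Theorem~\ref{thm:pentagram-geometric} applied to the period associator, after cancelling the $\sigma$-invariant factor $U(e)U(c)$. At zero residues the column is $\binom{1}{-\zeta(2)}$, with both entries nonzero, which forces $d_1=d_2=\varepsilon$. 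In residue degree two the upper entry $1-\zeta(2)(ea+cb)$ is $\sigma$-invariant, so $0=-\zeta(2)B_\sigma$ and hence $B_\sigma=0$.
\end{enumerate}

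Without this step, or an independent proof that the Hodge-split lifts of $\mathbb E_!$ for $p_3$ and $p_1$ agree up to sign, your argument gives at most the upper triangular shape, not $C^2_\sigma=\varepsilon\,1$.
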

\begin{proof}
Appendix~\ref{app:reflection-frames} determines the transition
from the grading and geometric commutativity for the period
associator, without using the coefficient theorem below.
\end{proof}

\begin{theorem}[The pentagram coefficient relations]
\label{thm:pentagram-coefficients}
The period associator satisfies both families:
\begin{equation}
S_A(a,b,c,d,e)=S_A(b,a,e,d,c).
\label{eq:explicit-mzv-reflection}
\end{equation}
The equality is in the completed algebra with relations
\eqref{eq:five-residue-relations}.
\end{theorem}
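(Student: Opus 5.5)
The plan is to derive \eqref{eq:explicit-mzv-reflection} from the geometric relation \eqref{eq:reflection-pentagram-relation}, $\mathcal S=\sigma\cdot\mathcal S$, by passing to Hodge comparison frames and then to the universal logarithmic realization. The steps are as follows.

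First, I apply the frame-change rule from the remark after the corollary to Theorem~\ref{thm:pentagram-geometric}. Both sides of \eqref{eq:reflection-pentagram-relation} are intrinsic Betti maps. In the Hodge frames, the left side becomes $\widehat{\mathcal S}$ of Proposition~\ref{prop:solid-composite-formula}. The right side becomes $(C_\sigma^2)^{-1}\widehat{\mathcal S}^{\sigma}C_\sigma^W$, where $\widehat{\mathcal S}^{\sigma}$ is the same formula computed for $V^\sigma$, i.e.\ with the residues replaced via \eqref{eq:explicit-residue-reflection}. By Lemma~\ref{lem:universal-graded-change} both transitions are $\varepsilon\,1$ with $\varepsilon=-1$, so the signs cancel and we obtain $\widehat{\mathcal S}(a,b,c,d,e)=\widehat{\mathcal S}(b,a,e,d,c)$.

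Second, I identify $\widehat{\mathcal S}$ with $S_A$ up to factors that are themselves $\sigma$-symmetric. By \eqref{eq:solid-composite-associator}, $\widehat{\mathcal S}=R_{\rm out}(g_I)JR_{\rm comp}(g_I)$ with $g_I=U(e_1)A(-e_0,-e_1)_Y$. Since $R_\nu$ is an anti-homomorphism, each factor splits as $R_\nu(A(-e_0,-e_1)_Y)R_\nu(U(e_1))$. The piece $R_\nu(A(-e_0,-e_1)_Y)$ is the raw transport. The sign change $(e_0,e_1)\mapsto(-e_0,-e_1)$ multiplies words of length $m$ by $(-1)^m$. This is absorbed by the grading automorphism of the residue algebra that acts by $-1$ on $a,\dots,e$, applied together with the weight-grading sign on the two blocks. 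That automorphism commutes with $\sigma$, and the relations \eqref{eq:five-residue-relations} are homogeneous.

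The factors $R_\nu(U(e_1))$ are the normal-variation units \eqref{eq:normal-variation-unit}. For the component they equal $U(2\pi i\,c)$ acting through $c_{\rm comp}=c$. For the outer factor they form a series in $Y_{\rm out}$, whose $J$-relevant column sandwiches the $e$-entry. I have to check that, after substitution into \eqref{eq:explicit-outer-gluing} and \eqref{eq:explicit-component-gluing}, these units appear as the variation conversions recorded in the remark after the corollary. Those conversions commute and are attached to the two normal directions at $q_0$, namely $D_{15}$ (residue $e$) and $D_{34}$ (residue $c$), which $\sigma$ exchanges. I would show that their product, multiplied on the appropriate side, is $\sigma$-invariant and invertible, so it can be cancelled from both sides. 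What remains is exactly \eqref{eq:explicit-pentagram-column} with $J=\binom10$ selecting the first column. Both block rows then give \eqref{eq:upper-pentagram-relations} and \eqref{eq:lower-pentagram-relations}.

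Third, the identity holds in every finite unipotent quotient of the universal realization, namely the completed regular representation on the enveloping algebra of the residue Lie algebra. It therefore holds in the completed algebra, and the degree continuity noted in the proof of Proposition~\ref{prop:explicit-pentagram-column} justifies passing to the limit.

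The main obstacle is the second step: making sure the $U$-factors and the $(-e_0,-e_1)$ sign really factor out $\sigma$-symmetrically, and are not merely symmetric up to a frame transition that would interfere with $\varepsilon$. If the direct cancellation fails, I would instead use that $U(2\pi i\,c)$ and $U(2\pi i\,e)$ commute with the relevant blocks modulo \eqref{eq:five-residue-relations}, since $[c,e]=0$. I would then show that the residual discrepancy is a graded frame change. Lemma~\ref{lem:universal-graded-change} already pins any such change down to a scalar.
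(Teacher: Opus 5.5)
Your proposal follows essentially the same route as the paper's proof: cancel the two sign transitions $\varepsilon\,1$ from Lemma~\ref{lem:universal-graded-change}, split off the normal-variation units using the anti-homomorphism property of $R_\nu$, cancel a $\sigma$-invariant invertible right factor, and then negate the five residues (the paper implements your block sign as conjugation of $X,Y$ by $\operatorname{diag}(-1,1)$, which commutes with $\sigma$). The step you flag as the main obstacle closes directly: the outer unit is $R_{\rm out}(U(e_1))=U(c_{\rm out}v_{\rm out})$, and $(c_{\rm out}v_{\rm out})J=Je$ gives $U(c_{\rm out}v_{\rm out})J=JU(e)$; this commutes past $1+c\,\mathscr F_{\widetilde A}(b,c)$ because $[e,b]=[e,c]=0$, so the Hodge column is $S_{\widetilde A}\,U(e)U(c)$ with $\sigma(U(e)U(c))=U(e)U(c)$.
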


\begin{proof}
The geometric Theorem~\ref{thm:pentagram-geometric},
expressed in arbitrary comparison frames, gives
\begin{equation}
\widehat{\mathcal S}(V)
 =(C_\sigma^2)^{-1}\widehat{\mathcal S}(V^\sigma)C_\sigma^W,
\label{eq:framed-pentagram-relation}
\end{equation}
where $C_\sigma^2,C_\sigma^W$ represent
$R_\sigma^2,R_\sigma^W$.

By Lemma~\ref{lem:universal-graded-change} the source
and target signs cancel. Put $\widetilde A=A(-e_0,-e_1)$.
The Hodge column is $S_{\widetilde A}U(e)U(c)$, as follows
from the transport formula and
$(c_{\mathrm{out}}v_{\mathrm{out}})J=Je$.
The right factor is invertible and invariant under
$\sigma$, since $ec=ce$. Cancelling it gives the raw
reflection identity for $\widetilde A$.
Simultaneously negating the five residues gives the
identity for $A$. For the outer matrices this substitution
uses conjugation by $\operatorname{diag}(-1,1)$; it
commutes with $\sigma$.
\end{proof}

\begin{cor}
Both families are polynomial equations of degree at most two,
with rational coefficients, in the coefficients of $A$.
\end{cor}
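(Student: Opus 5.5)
The corollary should be read off directly from the explicit column \eqref{eq:explicit-pentagram-column}. Write $A=\sum_w A_w w$ over words $w$ in $e_0,e_1$, with $A_w\in\mathbb C$. I would expand both entries of $S_A$ in this basis and show that every coefficient is a polynomial in the $A_w$ of total degree at most two, with rational coefficients. The relations \eqref{eq:upper-pentagram-relations} and \eqref{eq:lower-pentagram-relations} equate such expansions after the substitution $(a,b,c,d,e)\mapsto(b,a,e,d,c)$. Each coefficient equation, taken in a fixed graded basis of the completed algebra with relations \eqref{eq:five-residue-relations}, is then a difference of two such polynomials.

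The steps, in order, are as follows.
\begin{enumerate}
\item Observe that $\mathscr F_A(u,v)=\rev(A_1)(u,v)=\sum_{w}A_{we_1}\rev(w)(u,v)$ is linear in the coefficients of $A$. The substitutions $(u,v)=(X,Y)$ and $(u,v)=(b,c)$ involve only the fixed matrices \eqref{eq:explicit-outer-gluing} and the generators, and these have integer entries in the noncommutative polynomials in $a,\dots,e$. Hence each entry of $\mathscr F_A(X,Y)$ and of $\mathscr F_A(b,c)$ is a $\mathbb Z$-linear combination of the $A_w$, with coefficients in the integral algebra generated by the residues.
\item The factor $\begin{pmatrix}1+e\,\mathscr F_A(X,Y)_{11}\\ \mathscr F_A(X,Y)_{21}\end{pmatrix}$ therefore has entries of degree at most one in the $A_w$. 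The same holds for $1+c\mathscr F_A(b,c)$. Their product has degree at most two.
\item Fix the residue degree. The proof of Proposition~\ref{prop:explicit-pentagram-column} shows that an $(i,j)$ entry of a word of length $m$ has degree $m+j-i$. So in each graded piece only finitely many $A_w$ contribute, and each graded coefficient is a genuine polynomial rather than a series.
\item Choose a $\mathbb Q$-basis of each graded piece of the algebra presented by \eqref{eq:five-residue-relations}. This is possible because the relations are homogeneous with integer coefficients, so every graded piece is finite dimensional and defined over $\mathbb Q$. Reducing into this basis preserves rationality, and the reflection $\sigma$ of \eqref{eq:explicit-residue-reflection} is a permutation of generators, which also preserves rationality.
\end{enumerate}

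The only step needing care is step 3, the finiteness in each degree. The complication is that $X_{21}=1$ has residue degree zero, so words of arbitrary length could in principle contribute to a fixed degree. The degree formula quoted in step 3 settles this: the $(2,1)$ entry of a word of length $m$ has degree $m-1$ and the $(1,1)$ entry has degree $m$. Adding the extra factors $e$ and $c$ then bounds the word lengths contributing to a fixed total degree $N$ by roughly $N+1$. Once this is in place, the corollary follows.
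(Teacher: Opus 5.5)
Your proposal is correct and follows the paper's argument: both factors of the explicit column are linear in the coefficients $A_w$ and have rational (indeed integral) residue coefficients, so their product gives equations of degree at most two. Your finiteness check via the degree formula $m+j-i$ and your choice of a rational graded basis of the residue algebra make explicit what the paper's one-line proof leaves implicit (the paper records the finiteness in the proof of Proposition~\ref{prop:explicit-pentagram-column}).
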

\begin{proof}
Both factors in \eqref{eq:explicit-pentagram-column} are
linear in the coefficients of $A$ and have rational
residue coefficients.
\end{proof}

\begin{rem}
The superscripts distinguish the two blocks, not their
weights. Associator weight $n$ occurs in residue degree
$n$ in $S_A^{(1)}$ and in degree $n-2$ in $S_A^{(2)}$.
The coefficient extraction and all calculations through
weight five are collected in
Appendix~\ref{app:coefficient-calculations}, with the
upper and lower equations recorded separately.
The frame calculation above uses the period associator
to determine a transition independent of $A$; the formal
relations also make sense when the quadratic coefficient
of $A$ is zero.
\end{rem}

\section{Pentagon, convolution, and shuffle}
\label{sec:consequences-specializations}

\subsection{From pentagram to pentagon}
\label{subsec:pentagram-to-pentagon}

\begin{lemma}[Endpoint compatibility]
Semi-holonomy and holonomy along a side satisfy
\begin{equation}
a_i h_i=a_i'.
\label{eq:edge-holonomy-compatibility}
\end{equation}
\end{lemma}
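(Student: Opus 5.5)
We have to show that $a_i h_i=a_i'$ as maps $W_{i+1}\to E_i$. By cyclic relabelling it suffices to treat a single side, say $C_0=D_{15}$ for the projection $p_3$. Here $C_0$ carries the normal nearby-cycle local system $L_0=L_*$, and its two corners are $q_0=D_{15}\cap D_{34}$ and $q_1=D_{15}\cap D_{23}$. In the coordinate on $C_*$ with external points at $0,1$ and node at $\infty$, these corners are the two external points. Both $a_0$ and $a_0'$ are endpoint shrinkings of the one-dimensional statement of Proposition (Shrinking), applied on $C_*$ to the perverse sheaf $j_{**!}L_*[1]$. Each endpoint shrinking spreads a tangential value at a $*$-point along the real interval $(0,1)\subset C_*^\circ$, and this interval is exactly the side of $P$ lying on $C_0$.

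The plan is to factor both maps through the space $\Gamma((0,1),L_*)$ of flat sections on the open interval. First, identify the tangential fiber at an endpoint with the normal nearby-cycle fiber in the tangential direction pointing into $P$. This identification uses the classical Betti variation convention fixed in the definition of $a_i$, $a_i'$, together with the orientation line $\mathfrak o_i$. Second, observe that the shrinking isomorphism $\varphi_I$ restricted to the interval is the support map of the interval with coefficient a flat section. So $a_0(w)$ is the class of the interval carrying the section $s_w$ with tangential limit $w$ at $q_0$. Likewise $a_0'(w')$ is the class of the same interval carrying the section with tangential limit $w'$ at $q_1$.

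With this factorization the relation is formal. The tangential holonomy $h_0:W_1\to W_0$ is, by definition, parallel transport along the side from the tangential base point at $q_1$ to the tangential base point at $q_0$. Hence the flat section determined by $w'$ at $q_1$ equals the one determined by $h_0w'$ at $q_0$, and therefore $a_0'(w')=a_0(h_0w')$.

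The main obstacle is checking that the conventions match, not the geometry. Three things must agree for both endpoint shrinkings: the orientation of the interval, the trivialization of $\mathfrak o_0$ and $\mathfrak o_1$, and the classical variation used to identify endpoint fibers. In particular, exchanging the two degree-one support maps acts by $-1$ on $\mathfrak o_i$, and this sign must be absorbed consistently. I would first verify the statement for constant $V$, where $h_0=1$ and both sides must equal the interval class $[0,1]$ with the orientation induced by $\eta$. Then I would use exactness of all functors involved and induction on a unipotent filtration, exactly as in the proof of Proposition~\ref{prop:two-dimensional-shrinking}, to pass to general unipotent $V$.
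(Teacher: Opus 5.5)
Your core argument is correct. The paper's proof is only a citation: \cite[Subsection~3.1]{MarkarianHodge} for the standard path, and \cite[Section~6]{MarkarianConvolution} for arbitrary pro-unipotent paths. Your interval argument covers only the first case, but the statement only needs that case. Your factorization of both endpoint shrinkings through the support map of the open side interval with a flat section is exactly that standard-path argument. It uses that supported cohomology of a $*$-extension on the half-open interval is sections on the open interval, and that $\var$ identifies the endpoint vanishing cycles with the tangential fibers. It also matches the paper's own usage in the proof of Theorem~\ref{thm:pentagram-geometric}, where $a_i$ and $a_i'$ are component shrinking applied after spreading the corner values at $q_i$ and $q_{i+1}$.

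The step you should drop is the last one. Induction on a unipotent filtration transports \emph{isomorphism} statements through extensions (five lemma), as in Proposition~\ref{prop:two-dimensional-shrinking}. It does not transport \emph{equality} of two natural maps. Suppose $a_0h_0$ and $a_0'$ agree for $V'$ and $V''$ in $0\to V'\to V\to V''\to 0$. Their difference for $V$ can still be a nonzero map factoring through $W_1(V'')\to E_0(V')$, and this off-diagonal component is exactly where the nontrivial part of the holonomy lives.

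You do not need the induction. Your factorization already proves the identity for every $V$ at once. The conventions you list do not depend on $V$: the co-orientation of the side, the trivializations of $\mathfrak o_0$ and $\mathfrak o_1$, and the use of $\var$. At most they contribute a universal sign, and the constant-coefficient check fixes it. With the paper's conventions that sign is $+1$, because $\mathfrak o_0$ and $\mathfrak o_1$ are both trivialized by the single orientation of $P$. The side on $C_0$ therefore receives the same co-orientation from either end.
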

\begin{proof}
See \cite[Subsection~3.1]{MarkarianHodge} for the standard
path and \cite[Section~6]{MarkarianConvolution} for
arbitrary pro-unipotent paths.
\end{proof}

\begin{cor}\label{cor:pentagram-pentagon}
The dashed holonomies in \eqref{eq:pentagram-diagram} satisfy
\begin{equation}
h_0h_1h_2h_3h_4=1_{W_0}.
\label{eq:holonomy-pentagon}
\end{equation}
In associator coordinates this is the pentagon equation.
More generally, the pentagram coefficient relations imply
the five-factor equation
\begin{equation}
A(b,c)A(d,e)A(a,b)A(c,d)A(e,a)=1
\label{eq:formal-five-factor-pentagon}
\end{equation}
in the completed residue algebra
\eqref{eq:five-residue-relations}, for normalized group-like series
satisfying the reversal relation
\begin{equation}
A(e_0,e_1)A(e_1,e_0)=1.
\label{eq:formal-reversal}
\end{equation}
\end{cor}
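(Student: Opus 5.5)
The first assertion follows formally from the pentagram relations and endpoint compatibility. By Theorem~\ref{thm:pentagram-geometric}, $b_ia_i=b_{i-1}a_{i-1}'$ on $W_i$, and this common map is an embedding with image $L_P(V)$. Write $\mathcal S_i=b_ia_i$. By \eqref{eq:edge-holonomy-compatibility}, $\mathcal S_ih_i=b_ia_ih_i=b_ia_i'=\mathcal S_{i+1}$. Iterating around the pentagon gives
\[
\mathcal S_0h_0h_1h_2h_3h_4=\mathcal S_5=\mathcal S_0 .
\]
Since $\mathcal S_0$ is injective, \eqref{eq:holonomy-pentagon} follows. To translate this into associator coordinates, I would express each $h_i$ as the holonomy along the side $C_i$ between its two corners, with tangential base points. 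Along $C_i$ the two residues are those of the neighbouring sides, so $h_i$ is $A$ evaluated on the pair of adjacent residues in \eqref{eq:five-residue-generators}. The cyclic order $(C_0,\dots,C_4)=(D_{15},D_{23},D_{45},D_{12},D_{34})$ then produces the order $(b,c),(d,e),(a,b),(c,d),(e,a)$ in \eqref{eq:formal-five-factor-pentagon}, up to the orientation conventions absorbed by reversal.

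For the formal statement I would avoid geometry and work purely algebraically. For an abstract normalized group-like $A$ satisfying \eqref{eq:formal-reversal}, define the five holonomies $h_i$ as $A$ on adjacent residue pairs. Define the semi-holonomy composites $\mathcal S_i$ by \eqref{eq:explicit-pentagram-column} and its rotations by $\tau$. The reflection relation \eqref{eq:explicit-mzv-reflection} at $q_0$ says that the two solid paths through $q_0$ agree. Rotating by $\tau$ gives $\mathcal S_i$ well defined at every corner.

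It then remains to prove the algebraic analogue of endpoint compatibility: the raw column of one endpoint composed with $A$ on the side equals the raw column of the other endpoint. I would prove it using group-likeness and reversal. The column \eqref{eq:explicit-pentagram-column} is built from $A_1$, the words ending in $e_1$. Reversal $A(e_0,e_1)^{-1}=A(e_1,e_0)$ converts the component factor $1+c\mathscr F_A(b,c)$ at one endpoint of a side into the corresponding factor at the other endpoint, multiplied by the side holonomy. The argument mirrors \cite[Subsection~3.1]{MarkarianHodge}, where the semi-holonomy is the regularized holonomy projected onto the $Y$-part.

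With this identity in hand, the formal argument runs as in the geometric case: $\mathcal S_0\prod h_i=\mathcal S_0$. It remains to cancel $\mathcal S_0$. Its upper block $S^{(1)}_A$ has constant term $1$ in residue degree zero, so it is left-invertible in the completed algebra. This yields \eqref{eq:formal-five-factor-pentagon}.

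The main obstacle is this algebraic endpoint compatibility. There is a subtlety: $\mathcal S$ only sees $A_Y$ (the truncation $g_Y$), while the holonomy involves all of $A$. I expect that the lower block $S^{(2)}_A$, together with $X_{21}=1$ in \eqref{eq:explicit-outer-gluing}, recovers the $e_0$-ending words, since group-likeness links $A_0$ and $A_1$ through the shuffle relations. I would first check this degree by degree in low weights, using Appendix~\ref{app:coefficient-calculations}, before attempting the general identity.
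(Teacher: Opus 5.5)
Your argument for \eqref{eq:holonomy-pentagon} is the paper's: $\mathcal S_ih_i=b_ia_ih_i=b_ia_i'=\mathcal S_{i+1}$, iterate around the pentagon, and use injectivity of $\mathcal S_0$.

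\textbf{The formal part has a genuine gap.} You set $\mathcal S_i=\tau^i(S_A)$ and want $\mathcal S_ih_i=\mathcal S_{i+1}$, that is, $S_A\,A(-b,-c)=\tau(S_A)$. This identity is false.

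In the geometric case all composites land in one cohomology space. The raw columns $\tau^i(S_A)$, by contrast, are written in the outer logarithmic frames of five different projections, and these frames do not agree.

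Your reversal argument only handles the component factor. For $T=\rev A(b,c)$, projecting $TT^{-1}=1$ onto words beginning with $b$ gives
$(1+c\mathscr F_A(b,c))B_A(b,c)=1+b\mathscr F_A(c,b)$, with $B_A(b,c)=\rev A(c,b)=A(-b,-c)$.

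The outer factor $P_A$ must also be moved to the other end of the side, and it is not invariant. Let $\varrho=\tau\sigma$ be the reflection fixing $C_0$. Put $K_\Psi=\begin{pmatrix}1&a-d\\0&1\end{pmatrix}$ and $K_\Phi=\begin{pmatrix}1&e(a-d)\\0&1\end{pmatrix}$. The gluing matrices satisfy
$K_\Psi\varrho(X)=XK_\Psi$, $K_\Psi\varrho(Y)=YK_\Psi$, $K_\Phi\varrho(c_{\mathrm{out}})=c_{\mathrm{out}}K_\Psi$ and $K_\Psi\varrho(v_{\mathrm{out}})=v_{\mathrm{out}}K_\Phi$.
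Hence $P_A=K_\Phi\varrho(P_A)$.

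Combining the two factors and using $\sigma(S_A)=S_A$, so that $\varrho(S_A)=\tau(S_A)$, gives the correct endpoint identity $S_A\,A(-b,-c)=K_\Phi\tau(S_A)$.

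Your version fails already in residue degree two. The discrepancy is $e(a-d)\,\tau(S_A^{(2)})$, with leading term $\kappa\,e(a-d)$ where $\kappa=[e_0e_1]A$. For the period associator a direct check gives $-\zeta(2)\,e(a-d)\neq0$.

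Consequently the iteration only yields $S_A\prod_{i=0}^4\tau^i\bigl(A(-b,-c)\bigr)=\bigl(\prod_{i=0}^4\tau^i(K_\Phi)\bigr)S_A$. The loop closes because $\prod_i\tau^i(K_\Phi)=1$: the factors are upper unipotent, and their off-diagonal entries sum to $[a,b]+[b,c]+[c,d]+[d,e]+[e,a]=0$.

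This cyclic relation is the one non-commutation relation of the residue algebra. It enters here and in the intertwining identities for $Y$ and $v_{\mathrm{out}}$, and your proposal never uses it. Only after this step can you cancel the invertible upper entry and negate the residues.

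\textbf{The obstacle you single out is not one.} You do not need to recover the $e_0$-ending words of $A$ from the lower block and $X_{21}=1$. The full side holonomy $B_A(b,c)$ appears automatically in the component identity above, which uses reversal alone. Group-likeness is needed only to rewrite $B_A(b,c)$ as $A(-b,-c)$ via the antipode. In any case, checking low weights would not establish the identity in all weights.
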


\begin{proof}
Put $J_i=b_i a_i=b_{i-1}a_{i-1}'$. The endpoint identity gives
\[
J_i h_i=b_i a_i h_i=b_i a_i'=J_{i+1}.
\]
Hence $J_0h_0h_1h_2h_3h_4=J_0$, and injectivity of
$J_0$ proves \eqref{eq:holonomy-pentagon}.

For a formal series, the endpoint identity of
Appendix~\ref{app:formal-endpoint-identity}, together with
the coefficient relations, gives
\[
S_A A(-b,-c)=K_\Phi\tau(S_A),\qquad
K_\Phi=\begin{pmatrix}1&e(a-d)\\0&1\end{pmatrix}.
\]
Since $\prod_{i=0}^4\tau^i(K_\Phi)=1$ by the same appendix,
iteration gives
\[
S_A\prod_{i=0}^4\tau^i\bigl(A(-b,-c)\bigr)=S_A,
\]
with factors ordered from left to right by increasing $i$.
The upper entry
of $S_A$ has constant term $1$ and is invertible.
Cancel it and negate the five residues to obtain
\eqref{eq:formal-five-factor-pentagon}.
\end{proof}

\begin{cor}\label{cor:lower-pentagram-pentagon}
Let $A\in\mathbb C\langle\!\langle e_0,e_1\rangle\!\rangle$
be normalized and group-like, and satisfy
\eqref{eq:formal-reversal}. If $\kappa=[e_0e_1]A\ne0$,
the lower pentagram relations alone imply the pentagon
equation. Under these hypotheses, the lower relations,
the full pentagram relations, and the pentagon equation
are equivalent.
\end{cor}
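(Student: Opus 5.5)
The plan is to rerun the argument of Corollary~\ref{cor:pentagram-pentagon}, with the lower relations~\eqref{eq:lower-pentagram-relations} replacing the full set and the nonvanishing of $\kappa$ compensating for the missing upper block. The equivalences then follow by closing a cycle of implications. Full pentagram $\Rightarrow$ lower is trivial. Corollary~\ref{cor:pentagram-pentagon} already gives pentagram $\Rightarrow$ pentagon under normalization, group-likeness and~\eqref{eq:formal-reversal}. So two implications remain: pentagon $\Rightarrow$ full pentagram, and lower $\Rightarrow$ pentagon.

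For pentagon $\Rightarrow$ pentagram, I would use the endpoint identity of Appendix~\ref{app:formal-endpoint-identity} in the converse direction. For a normalized group-like $A$ satisfying reversal and~\eqref{eq:formal-five-factor-pentagon}, the series $A$ is a formal associator in the Drinfeld sense; for such series the defining relations involve only pentagon and (in weight two) group-likeness. The reflection identity~\eqref{eq:explicit-mzv-reflection} is an identity in the residue algebra that holds for the period associator (Theorem~\ref{thm:pentagram-coefficients}). I would argue it holds for every $A$ in the torsor of associators with the given $\kappa$ via the Grothendieck--Teichm\"uller action. Alternatively, and more directly, I would check that $S_A\,A(-b,-c)=K_\Phi\tau(S_A)$ is itself a consequence of the hexagon-free pentagon manipulation applied to the explicit column~\eqref{eq:explicit-pentagram-column}: both entries of $S_A$ are semi-holonomies whose comparison under $\sigma$ reduces to a product of $A$-factors that the pentagon collapses. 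I expect this direct route to be the more robust one.

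For lower $\Rightarrow$ pentagon I would proceed by induction on weight. Suppose $A$ satisfies the lower relations and the pentagon up to weight $n-1$. Let $A'$ be a genuine associator (a Drinfeld associator with coupling $\kappa$) agreeing with $A$ through weight $n-1$, which exists by Drinfeld's extension theorem. The difference $\delta=A-A'$ in weight $n$ is primitive, by group-likeness, and antisymmetric under reversal. Since $A'$ satisfies all pentagram relations, subtracting the relations for $A$ and $A'$ shows that $\delta$ satisfies the linearized lower relations in residue degree $n-2$. Because the relations are quadratic, the cross terms pair $\delta$ with the weight-two coefficient $\kappa$, and lower-weight terms cancel by the inductive hypothesis. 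Reading off the weight-$n$ defect of~\eqref{eq:formal-five-factor-pentagon} then gives the linearized pentagon defect, and the claim is that it equals $\kappa$ times a nondegenerate image of the linearized lower relations.

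\textbf{Main obstacle.} The key step is to show that this linearized lower relation, multiplied by $\kappa$, forces $\delta$ to satisfy the linearized pentagon. I would extract it from the lower entry $\mathscr F_A(X,Y)_{21}$. In that entry the quadratic term $\kappa$ enters through $X_{21}=1$ multiplied against $e\,\cdot$ and $c(d-a-b)$. This should identify the degree $n-2$ lower defect with a contraction of the weight-$n$ pentagon defect of $\delta$ (in the Lie algebra~\eqref{eq:five-residue-relations}) by $\kappa$-dependent elements. Injectivity of this contraction on primitive, reversal-antisymmetric elements is the crux, and I would attempt it by filtering by the $e$-degree and comparing leading terms.
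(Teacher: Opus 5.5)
\textbf{There is a genuine gap in lower $\Rightarrow$ pentagon.} Your reduction to two implications is sound, and for pentagon $\Rightarrow$ full pentagram you should simply cite Theorem~\ref{thm:pentagon-pentagram}, as the paper does. Your ``direct route'' there is circular: the identity $S_A\,A(-b,-c)=K_\Phi\,\tau\sigma(S_A)$ holds without any pentagon input, so the version with $\tau$ in place of $\tau\sigma$ is equivalent to $\sigma(S_A)=S_A$, which is the pentagram relations themselves. Transporting the identity along the Grothendieck--Teichm\"uller torsor would also need an equivariance of these quadratic relations that you do not establish.

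The gap itself is this. You assert the crux injectivity but do not prove it, and the inductive set-up is mis-weighted. In the associator-weight-$n$ lower equations, a weight-$n$ correction $\delta$ enters only linearly, through the terms $L^{(2)}_w$ of \eqref{eq:mzv-coefficient-extraction} with $|w|=n$. This is because both factors of a quadratic term have weight at least two. A product $\kappa\,\delta$ first appears in weight $n+2$, where further unknown coefficients of $A$ also enter. So the weight-$n$ lower equations cannot be matched with $\kappa$ times the weight-$n$ pentagon defect. Filtering by $e$-degree does not supply the missing nondegeneracy.

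\textbf{The missing idea is that the endpoint identity \eqref{eq:formal-column-endpoint} of Appendix~\ref{app:formal-endpoint-identity} is unconditional.} It holds for every normalized group-like $A$ satisfying reversal, with no pentagon or pentagram hypothesis. Since $K_\Phi$ is upper unipotent, its lower row reads
\[
L_A\,A(-b,-c)=\tau\sigma(L_A),\qquad L_A=S_A^{(2)}.
\]
The residue-degree-zero term of $L_A$ is $\kappa$: only $\kappa X$ in $\mathscr F_A(X,Y)$ contributes, through $X_{21}=1$. Hence $L_A$ is invertible exactly when $\kappa\ne0$. The lower relations $\sigma(L_A)=L_A$ then give $A(-b,-c)=L_A^{-1}\tau(L_A)$. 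Since $\tau^5=1$, the five rotated factors telescope to $L_A^{-1}\tau^5(L_A)=1$, and negating the residues gives \eqref{eq:formal-five-factor-pentagon}.

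This is a short global argument. It needs no comparison associator, no Drinfeld extension theorem and no weight induction. It also shows precisely how $\kappa$ intervenes: the weight-$n$ part of $A$ is governed by $L_A$ up to weight $n+2$, not by the weight-$n$ lower equations alone.
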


\begin{proof}
Put $L_A=S_A^{(2)}$. Its constant term in residue degree
is $\kappa$: in \eqref{eq:explicit-pentagram-column},
only the term $\kappa X$ of $\mathscr F_A(X,Y)$ contributes
to this constant term, through $X_{21}=1$.
Thus $L_A$ is invertible in the completed residue algebra.
Taking the lower row of
\eqref{eq:formal-column-endpoint} gives
\[
L_A A(-b,-c)=\tau\sigma(L_A).
\]
The lower reflection relation $\sigma(L_A)=L_A$ therefore
gives
\begin{equation}
A(-b,-c)=L_A^{-1}\tau(L_A).
\label{eq:lower-pentagon-factor}
\end{equation}
Since $\tau^5=1$, the five rotated factors telescope:
\[
\prod_{i=0}^4\tau^i\bigl(A(-b,-c)\bigr)
 =\prod_{i=0}^4\Bigl(\bigl(\tau^i(L_A)\bigr)^{-1}
                  \tau^{i+1}(L_A)\Bigr)=1,
\]
with factors ordered by increasing $i$ from left to right.
Negating the residues gives
\eqref{eq:formal-five-factor-pentagon}.
The equivalence with the full pentagram relations then
follows from Theorem~\ref{thm:pentagon-pentagram} below.
\end{proof}

\begin{rem}
For the period associator, $\kappa=-\zeta(2)\ne0$.
Over a commutative coefficient algebra, the same implication
from the lower relations to pentagon holds when $\kappa$
is a unit, or after localization at $\kappa$.
\end{rem}

\subsection{From pentagon to pentagram}
\label{subsec:pentagon-to-pentagram}

For the converse, we replace the real pentagon by a relative
class with prescribed pro-unipotent boundary paths. Work over
$\mathbb C$, with the tangential base points, orientations,
and variation conventions of Subsection~\ref{subsec:all-projections}.

\begin{definition}[Boundary faces]
Let $\widetilde S$ be the real oriented blow-up of $S$
along its boundary. Write $\partial_*\widetilde S$ and
$\partial_!\widetilde S$ for the unions of the faces over
$D_*$ and $D_!$. The inclusion
$U\hookrightarrow\widetilde S$ is a homotopy equivalence;
write $\widetilde V$ for the extension of $V$ to
$\widetilde S$.
\end{definition}

\begin{prop}
\label{prop:film-relative-realization}
The mixed-extension realization and the complex orientation
of $S$ give natural isomorphisms
\begin{equation}
\begin{aligned}
H^2(S,\jjj{V})
&\simeq H^2(\widetilde S,\partial_!\widetilde S;
                     \widetilde V)\\
&\simeq H_2(\widetilde S,\partial_*\widetilde S;
                     \widetilde V).
\end{aligned}
\label{eq:film-relative-realization}
\end{equation}
With the normal orientations of Section~\ref{sec:betti-presentations},
the lifted real pentagon corresponds to its supported
cohomology class.
\end{prop}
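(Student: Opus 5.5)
The plan is to realize both isomorphisms sheaf-theoretically on the oriented blow-up and then check the pentagon class locally. Write $\pi:\widetilde S\to S$ for the blow-down and $\tilde u:U\hookrightarrow\widetilde S$ for the inclusion. Put $\widetilde S_*=\widetilde S\setminus\partial_!\widetilde S$, and let
\[
\tilde a:U\hookrightarrow\widetilde S\setminus\partial_*\widetilde S,
\qquad
\tilde b:\widetilde S\setminus\partial_*\widetilde S\hookrightarrow\widetilde S .
\]

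For the first isomorphism, the key local fact is that for a single normal crossing factor the standard model is $\pi:[0,\epsilon)\times S^1\to\mathbb C$. For this model,
\[
\pi_*\tilde u_*=j_*,
\qquad
\pi_*\bigl(\text{extension by zero from the interior}\bigr)=j_!,
\]
because the boundary circle retracts onto the punctured disk. At a crossing of two components the blow-up is a product of two such models. The $*$ and $!$ conditions are imposed in independent normal directions, as in Lemma~\ref{lem:mixed-extension-locality}. Hence
\[
\jjj{V}=b_*a_!V\simeq\pi_*\,\tilde b_!\,\tilde a_*\widetilde V,
\]
where $\widetilde V$ is extended as a local system over $\partial_*\widetilde S$ and set to zero on $\partial_!\widetilde S$. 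The corners lying in both face types are handled by the same product formula. Taking global sections, and using that $\pi$ is proper, gives
\[
H^2(S,\jjj{V})\simeq H^2(\widetilde S,\partial_!\widetilde S;\widetilde V).
\]
Naturality in $V$ is clear from the construction.

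The second isomorphism is Lefschetz duality for the compact oriented $4$-manifold with corners $\widetilde S$. Its boundary is split into the two codimension-zero pieces $\partial_!\widetilde S$ and $\partial_*\widetilde S$, which meet along the corner locus $\partial_!\cap\partial_*$. For such a decomposition, duality gives
\[
H^k(\widetilde S,\partial_!;\widetilde V)\simeq H_{4-k}(\widetilde S,\partial_*;\widetilde V).
\]
The orientation comes from the complex orientation of $S$. One checks that the local-system version applies to the coefficients, with $\widetilde V$ or its dual according to the paper's pairing conventions; for unipotent $V$ this is the same statement up to the usual identification. This matches the Verdier duality used in Proposition~\ref{prop:two-dimensional-shrinking} for constant coefficients.

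For the last assertion, lift $P$ to $\widetilde P\subset\widetilde S$. Its sides lie in $D_*$, so $\partial\widetilde P\subset\partial_*\widetilde S$. Since $P\cap D_!=\varnothing$, the lift is disjoint from $\partial_!\widetilde S$. It therefore defines a class in $H_2(\widetilde S,\partial_*;\widetilde V)$ with coefficients in flat sections, that is, an element of $S_P(V)\to H_2$. One then compares this class with $\varphi_P$ of Definition~\ref{def:disk-supported-maps}. The supported class is the image of the Thom class of the normal bundle of $P^\circ$ under the adjunction $i_{P*}i_P^!\to\mathcal G(V)$. Under the duality above, a Thom class supported near a proper submanifold corresponds to its fundamental class. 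The two orientations agree because the normal orientation was defined as the complex orientation divided by the disk orientation $\eta$.

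I expect the main obstacle to be sign and coefficient bookkeeping at the corners. There are two points to handle. First, the corner faces belong to both $\partial_*$ and $\partial_!$, and the duality must use the product model with the correct corner convention. Second, the orientation identification must match the normal-orientation line $\mathfrak o_i$ conventions. I would settle both in the product chart of a single corner $q_0$, using coordinates $(1-x,y)$. The check reduces to the one-dimensional statement that the interval $(0,1]$ relative to $\{0\}$ represents the shrinking class $\varphi_I$ of the Shrinking proposition, taken twice in a product.
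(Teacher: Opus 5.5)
This is essentially the paper's argument: the normal-circle (oriented blow-up) model computes the $*$-extension, relative cochains at $\partial_!\widetilde S$ compute the $!$-extension, and Poincar\'e--Lefschetz duality for the split boundary gives the second isomorphism and identifies the lifted pentagon with its supported class. Your version supplies the local product checks that the paper leaves implicit.

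Two small corrections. First, in the definitions of $\tilde a,\tilde b$ you should remove $\partial_!\widetilde S$, i.e.\ use your own $\widetilde S_*$; as written, $\tilde b_!\tilde a_*$ would swap the two boundary conditions, although your verbal description and your conclusion are the correct ones. Second, the cap product with $[\widetilde S,\partial\widetilde S]$ gives the duality with coefficients in $\widetilde V$ itself, so no dual is needed.
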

\begin{proof}
The normal-circle model, retaining normal monodromy,
computes the $*$-extension; relative cochains at the
complementary faces compute $!$-extension.
Poincar\'e--Lefschetz duality gives the second isomorphism
and identifies the pentagon class with its support map.
\end{proof}

\begin{definition}[Universal path coefficients]
Fix the tangential corner $q=q_0$. Write $\mathscr R_q$
for the completed universal path local system: its fiber
at $z$ is the completed linear span of paths from $q$
to $z$. Its fiber at $q$ is the completed group algebra
of $\pi_1(U,q)$. For $v\in V_q$, evaluation of transport
gives a coefficient map
\[
\mathscr R_q\longrightarrow\widetilde V,
\qquad [\gamma]\longmapsto\gamma(v).
\]
Fix a finite cellular model for
$(\widetilde S,\partial_*\widetilde S)$. Complexes with
coefficients in $\mathscr R_q$ mean the termwise
augmentation completions of the cellular coefficient
complexes; their homology is taken after completion.
On a boundary face, first use its own group algebra and
augmentation filtration, then extend coefficients to
$\mathscr R_q$ by completed tensor product. Reference paths identify path torsors
with these group algebras. Normalized bar tensors are
completed for total augmentation degree.
\end{definition}

\begin{lemma}
\label{lem:universal-relative-film}
The boundary map is an isomorphism
\[
\partial:H_2(\widetilde S,\partial_*\widetilde S;
                         \mathscr R_q)
\xrightarrow{\sim}
H_1(\partial_*\widetilde S;\mathscr R_q).
\]
\end{lemma}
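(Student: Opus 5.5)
The plan is to apply the long exact sequence of the pair $(\widetilde S,\partial_*\widetilde S)$ with coefficients in $\mathscr R_q$:
\[
H_2(\widetilde S;\mathscr R_q)\longrightarrow
H_2(\widetilde S,\partial_*\widetilde S;\mathscr R_q)
\xrightarrow{\ \partial\ }
H_1(\partial_*\widetilde S;\mathscr R_q)
\longrightarrow
H_1(\widetilde S;\mathscr R_q).
\]
It is therefore enough to prove two vanishing statements for the completed universal coefficients: $H_2(\widetilde S;\mathscr R_q)=0$, and the map $H_1(\partial_*\widetilde S;\mathscr R_q)\to H_1(\widetilde S;\mathscr R_q)$ is zero. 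For the second, it is enough to show that $H_1(\widetilde S;\mathscr R_q)=0$.

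Since $U\hookrightarrow\widetilde S$ is a homotopy equivalence, $H_k(\widetilde S;\mathscr R_q)$ is the completed group homology $H_k(\pi_1(U,q);\widehat{\mathbb Q\pi})$ with coefficients in the completed group algebra. The space $U=\M{5}$ is a $K(\pi,1)$: the fibration $\M5\to\M4$ has fiber a four-punctured sphere and base a thrice-punctured sphere, so $\pi$ is an iterated extension of free groups. Before completion, $H_k(\pi;\mathbb Q\pi)=0$ for $k>0$, because the chains with coefficients in $\mathbb Q\pi$ compute the homology of the universal cover, which is contractible. After augmentation completion the corresponding statement is the vanishing of $\varprojlim$ and $\varprojlim^1$ terms over the unipotent quotients $\mathbb Q\pi/I^n$. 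For this step I would use the finite cellular model together with the fact that $\pi$ is an iterated extension of free groups, so that the Malcev completion is well behaved (good in the sense of Serre / Hain's $1$-formality setting). Using the Serre spectral sequence of the fibration with completed coefficients, I expect to reduce the claim to the free-group case. There, for a free group $F$ the completed module $\widehat{\mathbb Q F}$ has homology only in degree $0$, equal to $\mathbb Q$. This can be checked directly from the two-term resolution $\bigoplus \widehat{\mathbb QF}\,(g_i-1)\to\widehat{\mathbb QF}$, which stays exact after completion because $\widehat{I}$ is free on the $g_i-1$.

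One point needs care: $H_0(\widetilde S;\mathscr R_q)\cong\mathbb Q$ is nonzero. This does not affect the degree-one claim, but it means that $H_1(\partial_*\widetilde S;\mathscr R_q)$ contains the classes of loops, and these must all come from $H_2$ of the pair. So injectivity of $\partial$ comes from $H_2(\widetilde S;\mathscr R_q)=0$, and surjectivity comes from $H_1(\widetilde S;\mathscr R_q)=0$.

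For the boundary term I would also check that the convention of the definition, which uses face group algebras and then a completed tensor product with $\mathscr R_q$, agrees with restricting the cellular complex of $\widetilde S$. Each face over $D_*$ is a circle bundle over a punctured line, and the corner tori are glued along the pentagon of $*$-components. By flatness of $\widehat{\mathbb Q\pi}$ over the completed face algebras, which is again a freeness statement for iterated free extensions, the two complexes become quasi-isomorphic. This makes the long exact sequence valid in the completed setting. I expect the main obstacle to be the completion issue: showing that the Mittag-Leffler conditions hold, so that no $\varprojlim^1$ terms survive. I would handle this first by noting that all the terms are finite-dimensional at each stage $\mathbb Q\pi/I^n$, since the cellular model is finite.
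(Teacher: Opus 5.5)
Your proposal is correct and has the same skeleton as the paper. The exact sequence of the pair reduces the lemma to $H_1(\widetilde S;\mathscr R_q)=H_2(\widetilde S;\mathscr R_q)=0$, and $H_0=\mathbb Q$ is harmless, as you say. You differ from the paper in how you prove this vanishing.

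\begin{itemize}
\item The paper argues dually. The continuous dual $\mathscr O_q$ is the ind-unipotent local system of path-torsor functions. Its de Rham complex is Brown's $\Omega^\bullet B(U)$, which is acyclic in positive degrees by \cite[Corollary~3.40]{BrownModuli}. A comparison with cellular cohomology on finite-dimensional subobjects, followed by discrete/linearly compact duality, then gives the homological vanishing.
\item You compute $\operatorname{Tor}^{\mathbb Q\pi}_\bullet(\mathbb Q,\widehat{\mathbb Q\pi})$ directly from the finite $K(\pi,1)$ model. You use Lyndon--Hochschild--Serre for $\pi\cong F_3\rtimes F_2$ and the two-term free-group resolution.
\end{itemize}

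Your route is elementary and self-contained: it needs no de Rham comparison and no duality, and it shows concretely why completion preserves acyclicity. The paper's route is a one-line citation that stays inside the bar-construction framework it needs anyway for the Hodge realizations.

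To turn your sketch into a proof, state the input that makes the reduction work. The monodromy of $\M{5}\to\M{4}$ is by pure mapping classes, so it acts trivially on $H_1$ of the four-punctured fiber. Hence the augmentation filtration splits, and $\widehat{\mathbb Q\pi}\cong\widehat{\mathbb Q F_2}\,\widehat\otimes\,\widehat{\mathbb Q F_3}$ as a $\widehat{\mathbb Q F_3}$-module. It follows that $H_q(F_3;\widehat{\mathbb Q\pi})$ equals $\widehat{\mathbb Q F_2}$ for $q=0$ and vanishes otherwise, and the spectral sequence collapses. Without this almost-direct-product property, "iterated extension of free groups" alone does not give the collapse.

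Some of your framing is off-target:
\begin{itemize}
\item $1$-formality is not the relevant notion.
\item The uncompleted vanishing $H_k(\pi;\mathbb Q\pi)=0$ plays no role.
\item Mittag-Leffler is not the main obstacle. Finite-dimensionality only removes $\varprojlim^1$, whereas the vanishing of $\varprojlim_n H_k(\pi;\mathbb Q\pi/I^n)$ is exactly the rational $K(\pi,1)$ statement that Brown's corollary supplies in the paper.
\item No flatness is needed for the face convention. Termwise, the completed face complexes coincide with the corresponding subcomplex of the completed complex of $\widetilde S$, so the long exact sequence of the pair is immediate.
\end{itemize}
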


\begin{proof}
As a fiber-type arrangement complement, $\M{5}$ is a
rational $K(\pi,1)$. More precisely, the continuous dual
$\mathscr O_q=\mathscr R_q^\vee$ is the ind-unipotent local
system of path-torsor functions. Its de Rham complex is
$\Omega^\bullet B(U)$, whose positive-degree cohomology
vanishes by \cite[Corollary~3.40]{BrownModuli}.
Comparison on finite-dimensional subobjects identifies
this with cellular cohomology: filtered colimits commute
with the finite cellular calculation. Exact continuous
duality between discrete and linearly compact vector
spaces therefore gives
\[
H_1(\widetilde S;\mathscr R_q)
=H_2(\widetilde S;\mathscr R_q)=0.
\]
The exact sequence of the pair proves the assertion.
This vanishing holds before evaluation in a finite local
system; it need not survive that evaluation.
\end{proof}

\begin{definition}[Boundary paths]
Let $A$ be a group-like series with constant term $1$
and zero linear terms, satisfying
\eqref{eq:formal-reversal} and
\eqref{eq:formal-five-factor-pentagon}.
Its five tangential substitutions specify de Rham paths
$\gamma_i^A$ on the faces over $C_i$, directed as the dashed
arrows. A fixed Betti--de Rham comparison makes them
completed Betti paths. In the directed endpoint frames,
their transport series is $A(-e_0,-e_1)$, whose substitutions
are the factors in the proof of
Corollary~\ref{cor:pentagram-pentagon}.
\end{definition}

\begin{rem}
The normal meridian of a face is central. Changing the
splitting into normal and tangential generators adds central
terms to the arguments of $A$, leaving it unchanged since
$\log A$ has zero linear part. Fix the inward normal directions
at the corners, with no independent normal winding.
\end{rem}

\begin{lemma}[Completed path operations]
\label{lem:completed-path-operations}
The Fox representatives of face paths, their comparison
with normalized bar chains, and the transport prisms
extend continuously to the completions above. The same
holds for the interval operations defining semi-holonomy.
After extending coefficients, their chain identities
commute with evaluation in a finite unipotent local system.
In particular, the cellular image of a bar representative
differs from the Fox representative of the same completed
face path by a boundary in that face.
\end{lemma}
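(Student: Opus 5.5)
The plan is to reduce every operation to a map between finite-rank free modules over the completed group algebras, where continuity is automatic once each map is shown to have filtration degree at least zero. Throughout, let $I_q$ denote the augmentation ideal of $\mathbb Q[\pi_1(U,q)]$.

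First, fix the cellular model of $(\widetilde S,\partial_*\widetilde S)$. In each degree the cellular coefficient complex is a finite sum of copies of $\mathbb Q[\pi_1(U,q)]$, one for each cell, with the lifts fixed by reference paths. Its boundary maps are right multiplications by elements of the group ring. These multiplications preserve the $I_q$-adic filtration, so they extend uniquely to the completions. On a face over $C_i$, proceed in the same way with the face group $\pi_1$ of the punctured circle bundle and its own augmentation ideal. The inclusion of the face group sends augmentation ideal into augmentation ideal, so the completed tensor product is defined and continuous.

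Next comes the key step. The Fox representative of a group element $g$ in a face is the $1$-chain $\sum_s (\partial g/\partial s)\,[s]$, where $s$ runs over the edge generators. Fox derivatives satisfy $\partial(I^{n})\subset I^{n-1}$. I will check this directly from the Leibniz rule $\partial(uv)/\partial s=\partial u/\partial s+u\,\partial v/\partial s$ applied to $(g_1-1)\cdots(g_n-1)$. Hence $g\mapsto\sum_s(\partial g/\partial s)[s]$ extends continuously to completed group-like paths, and to the whole completed algebra, with a shift of one in filtration degree. The normalized bar chain $[g_1|\cdots|g_n]$ maps into cellular chains by an explicit acyclic-carrier formula, and I will use the version built from Fox calculus. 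This formula sends total augmentation degree $n$ into $I^{n-1}$-multiples of cells, which gives continuity for the completed bar construction.

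The transport prisms, meaning the chain homotopies between the cellular image of a bar chain and the Fox chain, come from the contracting homotopy of the free resolution. That homotopy is again given by Fox-type formulas, so it shifts the filtration by a bounded amount, and I expect this to be the main obstacle. I would first verify the shift on words $(g_1-1)\cdots(g_n-1)$ using the standard homotopy $h(g)=\sum$ prefix terms. I would then argue by density that all identities of the form $d h+h d=\mathrm{id}-\varepsilon$ and ``cellular image minus Fox $=\partial(\text{prism})$'' hold on the completion. Both sides are continuous, and the identities hold on the dense image of the group ring.

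The interval operations defining semi-holonomy are handled the same way. They are cap products with interval cells together with restriction to endpoints, all given by group-ring multiplications, so they are continuous. Finally, take a finite unipotent local system $V$. The monodromy representation $\mathbb Q[\pi_1]\to\operatorname{End}(V_q)$ kills $I_q^N$ for large $N$, so it factors through the completion. Evaluating coefficients is therefore a continuous ring map, and applying it termwise commutes with every chain identity above. The final assertion is the prism identity specialized to a face. It states that the bar image and the Fox representative of the same completed face path differ by the boundary of the completed prism, and that prism lies in the face.
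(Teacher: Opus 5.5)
Your treatment of the Fox representatives, the bar comparison, the homotopy between bar and Fox chains, and evaluation in a finite unipotent local system follows the paper's route. Finitely many Fox operators and multiplications lose a bounded amount of augmentation degree, so these maps extend continuously, and the identities pass to the completion by density. (In bar degree two the loss is two, not one, but any bounded loss suffices.)

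\textbf{The gap: the transport prisms.} You identify them with the homotopy comparing the cellular image of a bar chain with its Fox chain, but the paper lists that homotopy separately. A transport prism is the sweep of a fiber path $P$ along a base path $\gamma$ of a forgetful fibration. Its bar formula is $K(P,\gamma)=[P\mid\beta_1]-[\beta_0\mid P']$, where $\beta_0,\beta_1$ are the endpoint paths along the marked sections and $P'=\operatorname{Hol}_\gamma(P)$, so that $P\cdot\beta_1=\beta_0\cdot P'$. This is exactly what Lemma~\ref{lem:film-successive-shrinking} needs for completed paths.

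For a completed base path, $P'$ is not even defined until you show that the action of the base fundamental group on the completed fiber group algebra extends continuously to the completed base group algebra. Your estimates for right multiplications and Fox derivatives on a fixed cellular complex say nothing about this; it is a property of the monodromy of the fibration. The missing input is that fiber monodromy fixes the punctures and acts trivially on the first homology of the fiber. Hence $\operatorname{Hol}_\gamma-1$ raises fiber augmentation degree by at least one, the $m$-th power of the base augmentation ideal raises it by $m$, and transport converges.

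\textbf{Your density step.} The argument ``both sides are continuous, and the identities hold on the dense image'' works for linear or multilinear operations such as $g\mapsto\sum_s(\partial g/\partial s)[s]$ and $[g_1|\cdots|g_n]$. The prism, however, depends nonlinearly on $\gamma$: $\gamma$ enters several times, through $\beta_0$, $\beta_1$ and $P'$, and inverses appear in $P'$. Before extending from group elements to completed group-like elements, you must rewrite it as a continuous linear map. The paper does this by linearizing repeated occurrences with the coproduct and inverses with the antipode, both continuous.

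\textbf{A minor correction.} The interval operations defining semi-holonomy are not mere group-ring multiplications. Through the Fox formula of \cite[Section~6]{MarkarianConvolution} they involve a Fox derivative and lose one degree. The loss is bounded, so continuity still holds by the same estimate.
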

\begin{proof}
Trivialize the normal circle bundle over each punctured
component and use the cellular model
$S^1\times(S^1\vee S^1)$. Its resolution is the product
of the circle resolution and the free-group Fox resolution.
On either factor, a Fox derivative satisfies
$D(\mathfrak a^n)\subset\mathfrak a^{n-1}$ for its augmentation ideal $\mathfrak a$
and $n\geq1$.
The comparison with bar chains in degrees one and two,
and the homotopy comparing path representatives, use
finitely many Fox operators and multiplication. They
therefore have bounded loss of augmentation degree and
extend continuously. Choose these maps before extending
coefficients; their chain and homotopy identities are
preserved by this extension and by evaluation.

The interval operations are continuous by the same estimate
and the Fox formula of \cite[Section~6]{MarkarianConvolution}.
The bar formula for a transport prism uses multiplication
and two bar factors. Fiber monodromy fixes the punctures
and acts trivially on their first homology; its difference
from the identity raises fiber augmentation degree.
Thus transport also extends to completed base paths.
Repeated occurrences of a path are linearized by the
coproduct, and inverses by the antipode, both continuous.
The chain identities for ordinary paths consequently
extend from the dense group algebras to their completions.
After passage to the finite cellular model, evaluation in
a finite unipotent local system annihilates a sufficiently
high augmentation power in the coefficients and preserves
these identities.
\end{proof}

\begin{definition}[Relative film]
Use Lemma~\ref{lem:completed-path-operations} to represent
the paths by completed chains in their boundary faces.

Transport the unit at $q$ around the oriented boundary,
obtaining a chain $z_A$ in $\partial_*\widetilde S$.
Its boundary is total transport minus the identity,
applied to that unit. Thus
\[
\partial z_A=0
\]
is the pentagon equation, and
Lemma~\ref{lem:universal-relative-film} defines a unique
relative class
\[
[\Gamma_A]=\partial^{-1}[z_A].
\]
Call this class the \emph{relative film} of $A$. Evaluation
in $V$, via \eqref{eq:film-relative-realization}, gives
\begin{equation}
s_{A,q}:W_q\longrightarrow H^2(S,\jjj{V}).
\label{eq:film-supported-map}
\end{equation}
Here $W_q=V_q\otimes\mathfrak o_q$ includes the normal
orientation line. The film is a completed chain class;
no embedded disk representing its boundary is required.
\end{definition}

Replace component and outer shrinking by semi-holonomy
along the paths specified by $A$. Denote the resulting
solid maps by $a_i^A,(a_i')^A,b_i^A$. The component
inclusion in the definition of $b_i^A$ is unchanged.

\begin{lemma}
\label{lem:film-successive-shrinking}
The two solid composites at $q_i$ are the evaluation
of the same relative film:
\[
b_i^Aa_i^A=s_{A,q_i}
          =b_{i-1}^A(a_{i-1}')^A.
\]
\end{lemma}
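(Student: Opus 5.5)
We mimic the proof of Theorem~\ref{thm:pentagram-geometric}, with the real pentagon replaced by the relative film, and reduce the claim to a boundary calculation in $H_1(\partial_*\widetilde S;\mathscr R_q)$. By Lemma~\ref{lem:universal-relative-film}, a class in $H_2(\widetilde S,\partial_*\widetilde S;\mathscr R_q)$ is determined by its boundary.

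\textbf{Step 1: both composites as universal relative classes.} Each composite $b_i^Aa_i^A$ and $b_{i-1}^A(a_{i-1}')^A$ should come, after evaluation at $v\in V_{q_i}$, from a relative class with universal coefficients $\mathscr R_{q_i}$. By Lemma~\ref{lem:completed-path-operations}, the component semi-holonomy $a_i^A$ is represented by a completed chain on the face over $C_i$. This chain is the transport of the unit at $q_i$ along the interval of $C_i$, cut off near the other corner by the path $\gamma^A_i$. The component inclusion and outer semi-holonomy $b_i^A$ are realized as in Lemma~\ref{lem:main-pentagon-compatibility}. That is, we take the product of the $C_i$-interval with the transverse outer interval of the projection, now run along the completed base paths specified by $A$. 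Through \eqref{eq:film-relative-realization}, the result is a relative $2$-chain $\Gamma_i$ in $(\widetilde S,\partial_*\widetilde S)$ with coefficients in $\mathscr R_{q_i}$. The main-pentagon compatibility argument, carried out at the chain level with completed coefficients, identifies its evaluation with $b_i^Aa_i^A$. The same construction, started from the other side through $q_i$, gives a chain $\Gamma_i'$ representing $b_{i-1}^A(a_{i-1}')^A$.

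\textbf{Step 2: computing the boundaries.} The boundary of the product chain $\Gamma_i$ lies in $\partial_*\widetilde S$. It consists of the two $C_i$-type edges and the two transverse edges. In the pentagon case these edges are the five sides of $P$. Here the transverse edges are the transports along the face paths $\gamma^A_{i\pm1}$ of adjacent components, and the far edges are obtained by the transport prisms of Lemma~\ref{lem:completed-path-operations}. The task is to show that $\partial\Gamma_i$ is homologous in $\partial_*\widetilde S$ to the closed boundary chain $z_A$, based at $q_i$. This uses two inputs:
\begin{itemize}
\item the pentagon equation \eqref{eq:formal-five-factor-pentagon}, so that the total boundary transport closes up;
\item the rule for changing the base point along the dashed paths, namely the endpoint identity \eqref{eq:edge-holonomy-compatibility} in its formal version from Appendix~\ref{app:formal-endpoint-identity}.
\end{itemize}
The last sentence of Lemma~\ref{lem:completed-path-operations} then allows us to replace bar representatives by Fox representatives face by face, up to boundaries within each face.

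\textbf{Step 3: uniqueness and evaluation.} Once $\partial[\Gamma_i]=[z_A]$ and $\partial[\Gamma_i']=[z_A]$, Lemma~\ref{lem:universal-relative-film} gives $[\Gamma_i]=[\Gamma_i']=[\Gamma_A]$, the latter based at $q_i$. Since the chain identities commute with evaluation in the finite unipotent local system $V$, both composites equal $s_{A,q_i}$. The film at $q_i$ is compared with the one at $q_0$ by transport along the dashed paths. By definition, $s_{A,q_i}$ is the evaluation at $q_i$, so no further compatibility is needed.

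\textbf{Main obstacle.} We expect Step 2 to be the difficult part, specifically the identification of $\partial\Gamma_i$ with $z_A$ up to a boundary in $\partial_*\widetilde S$. This requires tracking three things:
\begin{itemize}
\item the normal meridians at the corners, handled by the remark on central normal generators with inward normals fixed;
\item the signs of the orientation lines $\mathfrak o_i$;
\item the variation factors $U_*$ of \eqref{eq:normal-variation-unit}.
\end{itemize}
We would first check the constant-path case, where $A$ is the period associator and the film is the real pentagon. There Step 2 reduces to Theorem~\ref{thm:pentagram-geometric}. We would then argue that every chain operation is natural in the completed face paths, so the identity for general $A$ follows by the same formulas.
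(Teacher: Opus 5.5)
Your outline has the same architecture as the paper's proof. You represent each composite at $q_i$ by a sweep: the component path on $C_i$ is transported along the outer base path, with lateral edges on the two adjacent $*$-sections. You then show its relative boundary is $[z_A]$ and conclude by Lemma~\ref{lem:universal-relative-film} and evaluation. The gap is Step~2, which you leave open and which carries all the content.

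Your assertion that $\partial\Gamma_i$ already lies in $\partial_*\widetilde S$ is not justified. Let $P$ be the initial fiber path, $\beta_0,\beta_1$ the lateral paths, and $P'=\operatorname{Hol}_\gamma(P)$. The bar chain $K=[P\mid\beta_1]-[\beta_0\mid P']$ has boundary $[P]+P[\beta_1]-[\beta_0]-\beta_0[P']$. Its final edge is a transported path in a fiber near $t=0$. It is not a priori the prescribed $A$-paths on the two faces $D_{12}$ and $D_{45}$ over $t=0$; the far edge must become two edges to give five sides, not your four.

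The missing idea is to identify $P'$ with the prescribed broken path $Q_1\cdot Q_2$ through the positive node bridge. The paper uses the path-torsor identity $i_*P\cdot s_{2*}(\gamma)=s_{1*}(\gamma)\cdot i_*\operatorname{Hol}_\gamma(P)$ of \cite[Proposition~19]{MarkarianConvolution}. It combines this with injectivity of the fiber's pro-unipotent fundamental group into that of $U$, so the required equality is \emph{exactly} the five-path pentagon equation. Only then does $K'=K-\beta_0[Q_1\mid Q_2]$ have boundary on the five faces. The residue table then checks that, after removing the central normal shifts, these face paths are the five prescribed substitutions of $A$.

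The inputs you propose instead would not supply this. The endpoint identity \eqref{eq:formal-component-endpoint} and its column form are identities in the residue algebra, used for pentagram $\Rightarrow$ pentagon. They say nothing about chains in $\partial_*\widetilde S$, and no change of corner is needed at a fixed $q_i$. The variation factors $U_*$ do not enter this Betti statement either.

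Nor can the general case follow from the period associator by ``naturality in the face paths''. For general $A$ there is no embedded disk. The far edge landing on the prescribed paths is not automatic but equivalent to pentagon, so the argument must be a chain computation uniform in $A$.

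Step~1 also omits a point. To identify the sweep with $b_i^Aa_i^A$, the paper notes that for ordinary paths the prism and the successive-support square differ by an absolute universal $2$-cycle. That cycle vanishes because $H_2(\widetilde S;\mathscr R_q)=0$, and Lemma~\ref{lem:completed-path-operations} then passes the comparison to completed paths.
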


\begin{proof}
For $p_3$, the $*$-sections are $D_{23},D_{34}$,
the fiber over $0$ is $D_{12}\cup D_{45}$, and the
$*$-component over $1$ is $C_0=D_{15}$.
Specialize the component semi-holonomy path on $D_{15}$
to a nearby smooth fiber and sweep it along the outer
semi-holonomy path. Away from the singular endpoints,
this gives a rectangle with lateral edges on the
$*$-sections.

For the completed sweep, let $P$ be the initial fiber path,
$P'$ its transport, and $\beta_0,\beta_1$ the endpoint paths.
Writing $g\cdot h$ for concatenation in traversal order,
we have $P\cdot\beta_1=\beta_0\cdot P'$. In the normalized
path bar complex put
\[
K(P,\gamma)=[P\mid\beta_1]-[\beta_0\mid P'].
\]
The bar differential
$\partial[g\mid h]=g[h]-[g\cdot h]+[g]$ gives
\begin{equation}
\partial K(P,\gamma)
 =[P]+P[\beta_1]-[\beta_0]-\beta_0[P'].
\label{eq:completed-sweep-boundary}
\end{equation}
The four terms are the initial, endpoint, and final chains,
with transported coefficients. The formula holds for
completed paths by Lemma~\ref{lem:completed-path-operations}.

For ordinary paths, the prism and transport square have
the same boundary representatives. Their difference is an
absolute universal $2$-cycle, zero in homology by the
proof of Lemma~\ref{lem:universal-relative-film}.
The composition law for support maps
identifies the square with successive interval support
(Lemma~\ref{lem:main-pentagon-compatibility}).
Lemma~\ref{lem:completed-path-operations} extends this
comparison to completed paths, before imposing pentagon.
Thus
\eqref{eq:completed-sweep-boundary} also computes the
boundary of two-step semi-holonomy.

The prescribed terminal fiber path concatenates the paths
on $D_{12}$ and $D_{45}$, using the positive node bridge:
in $uv=t$, take $u,v>0$ over $t>0$. Together with the fixed
inward tangential directions, this specifies the normal
winding. To identify the transported path with this one,
apply the intrinsic path-torsor identity of
\cite[Proposition~19]{MarkarianConvolution}: for fiber path
$P$ and base path $\gamma$,
\[
i_*P\cdot s_{2*}(\gamma)
 =s_{1*}(\gamma)\cdot i_*\operatorname{Hol}_{\gamma}(P).
\]
The fiber pro-unipotent fundamental group injects into
that of $U$, so equality with the prescribed broken path
is precisely the five-path pentagon equation.
For the boundary substitutions, set $y=(1-t)x$ in the
connection of Appendix~\ref{app:logarithmic-direct-image}
and write its tangential part as
$R_0\,d\log z+R_1\,d\log(z-1)$. The normal and tangential
residues are
\[
\begin{array}{c|c|c|c}
\text{face}&z&n&(R_0,R_1)\\ \hline
D_{15}&x&e&(b,c)\\
D_{23}&t&b&(d,e)\\
D_{34}&t&c&(a-c,e)\\
D_{45}&x&d&(b,a-d)\\
D_{12}&w&a&(c,d-a)
\end{array}
\]
The last row uses $x=1+t\xi$, $w=\xi/(\xi-1)$.
In each row $n$ commutes with the tangential residues.
Removing its shifts gives the five prescribed substitutions,
with reversal for
backward traversal.

In the fixed collars write $P'=Q_1\cdot Q_2$ in traversal
order, and replace the prism's final edge by this broken edge:
\[
K'=K-\beta_0[Q_1\mid Q_2].
\]
Then
\[
\partial K'
 =[P]+P[\beta_1]-[\beta_0]
   -\beta_0[Q_1]-(\beta_0\cdot Q_1)[Q_2].
\]
This boundary is supported on the five components
\[
D_{15},\quad D_{23},\quad D_{45},\quad D_{12},\quad D_{34},
\]
with the orientations and normal directions of $z_A$.
Changing representatives within a face changes no relative class.

Specialization in the initial collar supplies the component
inclusion in $b_0^A$. The support comparison and
\cite[Section~6]{MarkarianConvolution} therefore identify
this chain, with relative boundary $[z_A]$, with $b_0^Aa_0^A$.
For $p_1$ the construction starts on $D_{34}$ and has the
same five oriented boundary paths. By
Lemma~\ref{lem:universal-relative-film}, both universal
relative classes equal $[\Gamma_A]$. Evaluation in $V$
and \eqref{eq:film-relative-realization} give the result.
Relabelling treats every corner; $\mathfrak o_i$ includes
the sign from exchanging the interval directions.
\end{proof}

\begin{theorem}
\label{thm:pentagon-pentagram}
For normalized group-like series satisfying
\eqref{eq:formal-reversal}, the pentagon equation and
the pentagram coefficient relations are equivalent.
\end{theorem}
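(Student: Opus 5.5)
The proof has two directions, and the first is already done. The implication from pentagram to pentagon is Corollary~\ref{cor:pentagram-pentagon}. For normalized group-like $A$ satisfying \eqref{eq:formal-reversal}, the coefficient relations together with the formal endpoint identity of Appendix~\ref{app:formal-endpoint-identity} give \eqref{eq:formal-five-factor-pentagon}. So only the converse remains: assuming \eqref{eq:formal-five-factor-pentagon}, we must deduce \eqref{eq:explicit-mzv-reflection}. The plan is to redo the argument of Theorem~\ref{thm:pentagram-coefficients} with the geometric pentagon $P$ replaced by the relative film $[\Gamma_A]$.

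First, I will use the pentagon equation to make the film exist. Under \eqref{eq:formal-five-factor-pentagon} and reversal, the five boundary paths $\gamma_i^A$ close up, so $\partial z_A=0$. Lemma~\ref{lem:universal-relative-film} then gives the unique class $[\Gamma_A]$ and hence the maps $s_{A,q}$ of \eqref{eq:film-supported-map}. Lemma~\ref{lem:film-successive-shrinking} shows that at each corner $q_i$ the two solid composites built by semi-holonomy along the $A$-paths both equal $s_{A,q_i}$. At $q_0$ this gives $b_0^Aa_0^A=b_{-1}^A(a_{-1}')^A$, which is the formal analogue of \eqref{eq:reflection-pentagram-relation}.

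Next, I will evaluate both sides in the universal logarithmic realization, that is, in the completed residue algebra \eqref{eq:five-residue-relations}. Repeating Proposition~\ref{prop:solid-composite-formula} with semi-holonomy along the $A$-paths in place of the period paths, the composite through $D_{15}$ has matrix $S_{\widetilde A}U(e)U(c)$ with $\widetilde A=A(-e_0,-e_1)$. The gluing data \eqref{eq:explicit-outer-gluing} and \eqref{eq:explicit-component-gluing} are unchanged, since they depend only on the connection and not on the choice of paths. The composite through $D_{34}$ is the $\sigma$-relabelled expression. The frame transitions of Lemma~\ref{lem:universal-graded-change} are both $\varepsilon=-1$ and independent of $A$, so the two signs cancel. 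The factor $U(e)U(c)$ is invertible and $\sigma$-invariant, so it can be removed. Negating the five residues, which uses conjugation by $\operatorname{diag}(-1,1)$, then yields $S_A=\sigma(S_A)$, i.e.\ both the upper and lower families.

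The main obstacle is the frame lemma. Lemma~\ref{lem:universal-graded-change} was established using the period associator, and I need its transitions to hold for an arbitrary $A$. I would argue that $C_\sigma^W$ and $C_\sigma^2$ are determined by the graded pieces and the orientation-line sign alone, and are therefore independent of the chosen paths, as the remark after Theorem~\ref{thm:pentagram-coefficients} indicates. If this argument falls short, a fallback is purely algebraic: both sides of \eqref{eq:explicit-mzv-reflection} are linear in the coefficients of $A$ in each factor. One could then try to show directly that $S_A-\sigma(S_A)$ lies in the ideal generated by the pentagon and hexagon-free consequences of \eqref{eq:formal-five-factor-pentagon}, using $S_AA(-b,-c)=K_\Phi\tau(S_A)$ from Appendix~\ref{app:formal-endpoint-identity} in reverse. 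This is harder, so I would first pursue the geometric route.
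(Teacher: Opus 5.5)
\paragraph{Gap: the matrix of the film composite.}
Your architecture matches the paper's. The forward direction is Corollary~\ref{cor:pentagram-pentagon}. For the converse, the pentagon gives $\partial z_A=0$, Lemma~\ref{lem:universal-relative-film} gives the film, Lemma~\ref{lem:film-successive-shrinking} gives reflection in the full cohomology group, and the frame signs and variation factors cancel because they depend only on the logarithmic coefficient object.

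The gap is where you get the matrix of the film composite by ``repeating Proposition~\ref{prop:solid-composite-formula} with semi-holonomy along the $A$-paths''. That proposition computes $(\varphi^F)^{-1}\mathcal S$ through the Hodge defects $(u^F)^{-1}u$ and then inserts the defect formula \eqref{eq:outer-associator-defect}. Both inputs concern the Hodge realization of the genuine Betti maps, which is governed by the period associator. For an arbitrary $A$ the maps $a_i^A,b_i^A$ carry no Hodge structure, so there is nothing to repeat.

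What is needed is a formula for raw semi-holonomy along an arbitrary completed path in terms of its transport series. This is the actual content of the paper's proof:
\begin{itemize}
\item naturality in finite logarithmic gluing data forces raw semi-holonomy to have the form $1+c\,h(X,Y)v$;
\item specializing to the $*$-extension, $v=1$, $c=e_1$, determines $h$, because $h\mapsto e_1h$ is injective, and there the Fox formula gives the rightmost-letter expression;
\item for $T_A=A(-e_0,-e_1)$, group-likeness gives $\rev(T_A^{-1})=A$, so each action is $R_\nu(A_Y)=1+c_\nu\mathscr F_A(X_\nu,Y_\nu)v_\nu$, and $R_{\mathrm{out}}(A_Y)\,J\,R_{\mathrm{comp}}(A_Y)$ is exactly $S_A$.
\end{itemize}
This is where group-likeness enters the converse, and it produces $S_A$ directly, with no residue negation. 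Your $S_{\widetilde A}U(e)U(c)$ followed by conjugation by $\operatorname{diag}(-1,1)$ is the period-case bookkeeping carried over. The final identity would agree, since negation sends $S_{\widetilde A}$ to $S_A$ and commutes with $\sigma$. But your formula is asserted rather than derived.

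\paragraph{The frame lemma and the fallback.}
The frame lemma, which you call the main obstacle, is handled roughly as you suspect, though not by grading alone. Grading only makes the transition upper triangular with scalar diagonal and a degree-two corner $B_\sigma$; killing $B_\sigma$ uses the period column. Since the transition is a property of the coefficient object and does not involve the paths, that one computation applies to every $A$.

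Your fallback, by contrast, cannot work as described. The identity $S_AA(-b,-c)=K_\Phi\tau(S_A)$ already presupposes $\sigma(S_A)=S_A$. The unconditional form is \eqref{eq:formal-column-endpoint}, with $\varrho=\tau\sigma$. Applying $\varrho$ to it, and using $\varrho(K_\Phi)=K_\Phi^{-1}$ and $A(-c,-b)=A(-b,-c)^{-1}$, returns the same identity. So by itself it cannot force $\sigma$-invariance, and some further input, such as the film, is required.
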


\begin{proof}
For the converse to Corollary~\ref{cor:pentagram-pentagon},
Lemma~\ref{lem:film-successive-shrinking} gives reflection
in the full space $H^2(S,\jjj{V})$. Express the composite
in the fixed universal logarithmic frames of
Subsections~\ref{subsec:associator-formula}--\ref{subsec:coefficient-relations}.
Naturality in finite logarithmic gluing data writes raw
semi-holonomy as $1+c\,h(X,Y)v$, by the argument of
\cite[Section~2]{MarkarianDefect}. Specialization to the
$*$-extension, $v=1$, $c=e_1$, determines $h$, since
$h\mapsto e_1h$ is injective on universal corner series.
There the Fox formula of \cite[Section~6]{MarkarianConvolution}
gives the rightmost-letter expression for transport.
For $T_A=A(-e_0,-e_1)$, group-likeness gives
$\rev(T_A^{-1})=A$; hence either transport action is
\[
R_\nu\bigl((\rev(T_A^{-1}))_Y\bigr)=R_\nu(A_Y)
 =1+c_\nu\mathscr F_A(X_\nu,Y_\nu)v_\nu,
\]
where $A=1+A_0e_0+A_1e_1$.
The composition $R_{\mathrm{out}}(A_Y)J
R_{\mathrm{comp}}(A_Y)$ is exactly
\eqref{eq:explicit-pentagram-column}.
Frame transitions and normal variation conversions depend
only on the logarithmic coefficient object. The commuting
variation factors cancel as in
Theorem~\ref{thm:pentagram-coefficients}, giving
\[
S_A(a,b,c,d,e)=S_A(b,a,e,d,c).
\]
Finite regular quotients detect this identity in the
completed algebra. Both blocks occur because reflection
holds in the full relative cohomology group.
\end{proof}

\begin{rem}
The film is relative, so $\pi_2(U)=0$ causes no obstruction;
rational asphericity supplies the universal acyclicity
comparing its fillings. Standard boundary paths recover
the supported real pentagon. For general $A$, its image
is not identified with the original $L_P(V)$.
\end{rem}

\subsection{The convolution specialization}
\label{subsec:two-projections}

\begin{definition}\label{def:convolution-coefficient-object}
Use the target coordinates
\[
X=\frac1{1-y},\qquad Y=\frac{x-y}{x(1-y)}
\]
for $p_4,p_2$, respectively; then $p_3$ has coordinate
$t=Y/X$. For iterated Kummer extensions $\mathbb V_1,
\mathbb V_2$ on $\M{4}$, put
\[
\mathbb F_i=j_{**!}\mathbb V_i[1],\qquad
\mathbb V=p_4^*\operatorname{inv}^*\mathbb V_1
 \otimes p_2^*\mathbb V_2,\qquad
\operatorname{inv}(z)=z^{-1}.
\]
Let $\mathscr K_{\mathrm{conv}}$ be the extension of
$\mathbb V[2]$ with the boundary conditions of
Subsection~\ref{subsec:mixed-extension}, replacing the
$!$-extension along $D_{24}$ by the smooth extension.
\end{definition}

On $D_{24}^\circ$ one has $X=Y=1/(1-y)$; both
coefficient maps extend across this divisor with values
in $\M{4}$. Thus $\mathbb V$ extends smoothly across
$D_{24}^\circ$. Retaining the prescribed conditions on
the other boundary components, adjunction gives a natural morphism
\[
\jjj{\mathbb V}[2]\longrightarrow\mathscr K_{\mathrm{conv}}.
\]
It is an isomorphism near $P$, since $P\cap D_{24}=\varnothing$.
On Betti realizations, the induced map on cohomology
therefore carries $\varphi_P$ to the supported-class map
on $P$ with convolution coefficients. Naturality makes
this identification compatible with proper direct image
and the nearby- and vanishing-cycle maps. This is the
comparison with the original mixed extension used below.

\begin{prop}\label{prop:convolution-supported-square}
The two projections give
\[
Rp_{3*}\mathscr K_{\mathrm{conv}}\simeq\mathbb F_1*\mathbb F_2,
\qquad
Rp_{4*}\mathscr K_{\mathrm{conv}}
 \simeq\operatorname{inv}^*\mathbb F_1
       \otimes H^0(\mathbb P^1,\mathbb F_2).
\]
Their comparison is the convolution shrinking square of
\cite[Subsection~3.3]{MarkarianDefect}. Its two Betti
composites are the supported-class map on $P$.
\end{prop}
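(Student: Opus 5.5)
I plan to compute both direct images fiberwise, using proper base change along $p_3$ and $p_4$, and then to identify the comparison square through the real pentagon $P$.

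\emph{The projection $p_3$.} In coordinates, $t=Y/X$. The fiber over $t$ is parametrized by $X$, with $Y=tX$. The coefficient object restricts to $\operatorname{inv}^*\mathbb V_1(X)\otimes\mathbb V_2(tX)$. This is the multiplicative convolution kernel $\mathbb V_1(X^{-1})\otimes\mathbb V_2(tX)$, after the substitution $X\mapsto X^{-1}$, which turns it into $\mathbb V_1(u)\otimes\mathbb V_2(t/u)$.

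Next I check the boundary conditions on the fiber points. The $*$-points $D_{23},D_{34}$ of the $p_3$ fiber correspond to $X=0,\infty$, that is, to the points $0,\infty$ of the convolution variable. These are the $*$-ends coming from $j_{**!}$ at $0$ on the two factors. The $!$-points are $D_{25}$ and $D_{24}$, so $!$ survives only along $D_{25}$. Along $D_{24}$ we now have the smooth extension. This is exactly where $X=Y=1$ in the fiber, and there the two factors hit their own $1$-points simultaneously.

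So the stalk computation should reproduce the definition of $\mathbb F_1*\mathbb F_2$ in \cite{MarkarianConvolution,MarkarianDefect}, namely $R m_!$, resp.\ $Rm_*$ with the mixed conditions, of $\mathbb F_1\boxtimes\mathbb F_2$. I will organize this by a base-change diagram over $\mathbb G_m^2\to\mathbb G_m$ and check the boundary fibers $t=0,1,\infty$ against the table of Subsection~\ref{subsec:mixed-extension}, modified at $D_{24}$.

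\emph{The projection $p_4$.} The coordinate on the base is $X=1/(1-y)$ and the fiber is parametrized by $Y$. The coefficient object splits as the constant $\operatorname{inv}^*\mathbb V_1(X)$ tensored with $\mathbb V_2(Y)$. The projection formula therefore gives $\operatorname{inv}^*\mathbb F_1\otimes Rp_{4*}(\text{fiber } \mathbb V_2)$.

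Two things need checking here. First, on each fiber the conditions on the $Y$-line are $*,*,!$ at $0,1,\infty$, because the smooth $D_{24}$ condition is now horizontal. This gives $H^\bullet(\mathbb P^1,j_{**!}\mathbb V_2)$, which is concentrated in degree $0$ by the shrinking proposition. Second, the base conditions reproduce $j_{**!}$ on $\operatorname{inv}^*\mathbb V_1$, after matching the fiber divisors over $X=0,1,\infty$. This is bookkeeping with the dihedral labels, of the same kind as in the table of Lemma~\ref{lem:eleven-pentagon-disk}.

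\emph{The comparison square.} Each projection gives an outer shrinking followed by an inner one. For $p_3$ this is the interval $(0,1]$ in $t$, then the component interval. For $p_4$ it is the interval in $X$, then the interval in $Y$. I will apply Lemma~\ref{lem:main-pentagon-compatibility}, in the form used for Theorem~\ref{thm:pentagram-geometric}, to both projections. Each composite then equals the supported-class map $\varphi_P$ spread from the corner, via the product decomposition of $P^\circ$ in the relevant coordinates. Since $\jjj{\mathbb V}[2]\to\mathscr K_{\mathrm{conv}}$ is an isomorphism near $P$, and naturality is compatible with $p_*$, $\psi$ and $\phi$, both composites coincide with the supported-class map on $P$ for $\mathscr K_{\mathrm{conv}}$. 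That square is then the convolution shrinking square of \cite[Subsection~3.3]{MarkarianDefect}.

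\emph{Main obstacle.} The hardest step is matching orientations and signs, including the $\operatorname{inv}$ and the variable change $X\mapsto X^{-1}$, with the conventions of \cite{MarkarianDefect}. I would handle this by first checking constant coefficients, where both composites must send $1$ to $[P]$. I would then extend by exactness and induction on the unipotent filtration, as in Proposition~\ref{prop:two-dimensional-shrinking}.
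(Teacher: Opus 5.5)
\paragraph{The $p_3$ fiber bookkeeping is wrong, and it hides where the statement has content.}
A smooth $p_3$-fiber meets only the divisors $D_{i3}$. In your coordinate $X$ these are $D_{13}$ at $X=0$ and $D_{35}$ at $X=\infty$, both $!$, and $D_{23}$ at $X=1$ and $D_{34}$ at $X=1/t$, both $*$.
\begin{itemize}
\item In $u=1/X$ the $*$-points are the $1$-ends $u=1$ and $v=t/u=1$ of the two factors. They are not the $0$-ends.
\item At $u=0$ and $u=\infty$ the $*$-end of one factor meets the $!$-end of the other, so the stalk vanishes.
\item $D_{24}$ and $D_{25}$ are not fiber points at all. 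They are components of $p_3^{-1}(1)=D_{15}\cup D_{24}$ and $p_3^{-1}(\infty)=D_{14}\cup D_{25}$. Your fiber would have rank $\operatorname{rank}V$ rather than $2\operatorname{rank}V$.
\end{itemize}
Hence $\mathscr K_{\mathrm{conv}}$ and the original mixed extension have the same smooth $p_3$-fibers. Base change over $\mathbb G_m^2\to\mathbb G_m$ therefore only identifies restrictions to $\M{4}$. All the content lies in the extension across $t=0,1,\infty$, and matching special fibers against a table does not construct an isomorphism of complexes.

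The same problem affects $p_4$. Near $D_{15}$, which lies over $X=1$ and along which both factors have monodromy, the $*$-extension of the tensor product is not the tensor product of the extensions. For constant coefficients, $j_*\mathbb Q\otimes j_*\mathbb Q\neq j_*\mathbb Q$. So the projection formula is not available on $S$ as it stands.

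\paragraph{The missing idea is a global comparison with the external product.}
Consider $(p_4,p_2)\colon S\to\mathbb P^1_X\times\mathbb P^1_Y$.
\begin{itemize}
\item $D_{13},D_{15},D_{35}$ are the exceptional curves over $(0,0)$, $(1,1)$, $(\infty,\infty)$.
\item $D_{24}$ is the proper transform of the diagonal.
\item The remaining six divisors are the lines $X,Y\in\{0,1,\infty\}$, with exactly the conditions of $\operatorname{inv}^*\mathbb F_1\boxtimes\mathbb F_2$.
\end{itemize}
Contract only $D_{15}$. This gives the blow-up $S'$ at $(0,0)$ and $(\infty,\infty)$, through which both $p_4$ and $p_3$ (that is, $t=Y/X$) factor. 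One then proves
\[
R\gamma_*\mathscr K_{\mathrm{conv}}\simeq b^*(\operatorname{inv}^*\mathbb F_1\boxtimes\mathbb F_2).
\]
Away from $(1,1)$ this is the matching of conditions. The external product is smooth across the diagonal, which is why $D_{24}$ carries the smooth extension. At $(1,1)$ both sides are $*$-extensions: the external product is $*$ along $X=1$ and $Y=1$, with vanishing costalk at the crossing. Both pushforward formulas then follow from \cite[Subsection~3.1]{MarkarianDefect}. For $p_4$ one also uses that the external product has zero stalks at the two blown-up points.

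\paragraph{The comparison square.}
The relabelled Lemma~\ref{lem:main-pentagon-compatibility} for $p_4$ concerns its mixed fiber $D_{12}\cup D_{35}$ over $X=\infty$, for the original extension. The convolution $p_4$ route is different: it shrinks in $v=Y$, then in $u=1/X$ towards $u=1$, which is the $(*,*)$ fiber $D_{23}\cup D_{15}$. What you need is the explicit orientation-preserving identification $y=1-u$, $x=(1-u)/(1-uv)$ of $(0,1)^2$ with $P^\circ$, with $t=uv$. Under it, both routes are iterated interval supports on the same square.

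Finally, your sign strategy does not work as stated. Agreement on constant coefficients plus induction on a unipotent filtration proves that a natural map is an isomorphism, not that two natural maps coincide. The product-support argument works directly for all coefficients, so that step is unnecessary.
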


\begin{proof}
Contracting $D_{15}$ gives the blow-up $S'$ of
$\mathbb P^1_X\times\mathbb P^1_Y$ at $(0,0)$ and
$(\infty,\infty)$. For the contraction $\gamma:S\to S'$
and the blow-up map $b$, one has
\[
R\gamma_*\mathscr K_{\mathrm{conv}}
 \simeq b^*(\operatorname{inv}^*\mathbb F_1
                    \boxtimes\mathbb F_2).
\]
The two objects agree away from $(1,1)$ and are both
$*$-extensions across that point: the external product
is $*$-extended at $X=1$ and $Y=1$, and its costalk at
their intersection vanishes. The pushforward assertions
now follow from \cite[Subsection~3.1]{MarkarianDefect}.

Put $u=1/X$ and $v=Y$, so that $t=uv$. The inverse
coordinate change
\[
y=1-u,\qquad x=\frac{1-u}{1-uv}
\]
identifies $(0,1)^2$ with $P^\circ$, preserving the
orientation of Subsection~\ref{subsec:two-dimensional-shrinking}.
The $p_4$ route shrinks first in $v$, then in $u$;
the $p_3$ route pushes along $uv=t$, then shrinks in $t$.
Both forget support on $P$, identifying our Betti maps
with those of the cited square. The beta factor and
the Hodge defect identity are consequently those of
\cite[Subsections~3.2--3.3]{MarkarianDefect}.
\end{proof}

\begin{cor}\label{cor:pentagram-double-shuffle}
A normalized group-like series satisfying reversal and
the pentagram coefficient relations satisfies Racinet's
regularized double shuffle relations.
\end{cor}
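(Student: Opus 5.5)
The plan is to reduce the corollary to results already in hand and to Furusho's theorem. Let $A$ be normalized, group-like, and satisfy the reversal relation \eqref{eq:formal-reversal} together with the pentagram coefficient relations. By Corollary~\ref{cor:pentagram-pentagon}, under exactly these hypotheses, $A$ satisfies the five-factor equation \eqref{eq:formal-five-factor-pentagon} in the completed residue algebra with relations \eqref{eq:five-residue-relations}.

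The first step is to recognise \eqref{eq:formal-five-factor-pentagon} as the standard pentagon equation in $\widehat{U}\mathfrak p_5$. The spherical presentation of the residues $r_{ij}$ is the Drinfeld--Kohno presentation of $\mathfrak p_5$ on the sphere. Choosing the standard identification of $(a,b,c,d,e)$ with the five cyclically adjacent generators $r_{12},r_{23},r_{34},r_{45},r_{15}$, the five factors become Drinfeld's $\Phi_{\bullet\bullet\bullet}$ substitutions. Since \eqref{eq:five-residue-relations} is the standard dihedral form of that presentation, this should be a relabelling of the form $A(t_{12},t_{23})A(t_{34},t_{45})\cdots=1$. The sign convention $A(e_0,e_1)=A_{\mathrm H}(e_0,-e_1)$ and the right-action orientation must be tracked. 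Reversal \eqref{eq:formal-reversal} absorbs any orientation reversal of the cyclic order, and group-likeness absorbs the inversion of factors. I expect this bookkeeping to be the main obstacle. In particular, one must check that the convention $A=G_1^{-1}G_0$ with $dt/(t-1)$ matches the one in \cite{FurushoDoubleShuffle}, and if not, replace $A$ by $A(e_0,-e_1)$ or by $A^{-1}$. Such a replacement preserves the hypotheses and the double shuffle relations, since Racinet's relations are stable under the corresponding normalization change.

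Next, I would apply \cite[Theorem~1.1]{FurushoDoubleShuffle}. That theorem states that a group-like series with vanishing linear terms satisfying the pentagon equation satisfies the regularized double shuffle relations (Racinet's $\mathrm{DMR}_\mu$ with $\mu=[e_0e_1]$ coefficient up to normalization). Furusho's hypothesis requires the pentagon alone, not the hexagons, and normalized means zero linear terms, which matches our Definition~\ref{def:pentagram-coefficient-relations}. The conclusion then follows.

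As a consistency check I would also mention the Hodge-theoretic route for the period associator. Proposition~\ref{prop:convolution-supported-square} identifies the supported-class map on $P$ with the convolution shrinking square of \cite{MarkarianDefect}, whose Hodge defect identity yields double shuffle there. This is not needed for the formal statement, which rests only on Corollary~\ref{cor:pentagram-pentagon} and Furusho.
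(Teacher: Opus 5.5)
Your proposal is correct and is the paper's argument: for a general series the paper applies Corollary~\ref{cor:pentagram-pentagon} and then \cite[Theorem~1.1]{FurushoDoubleShuffle}, and it uses the convolution square of Proposition~\ref{prop:convolution-supported-square} only for the period associator, matching your closing remark. Your fallback of replacing $A$ by $A(e_0,-e_1)$ or $A^{-1}$ is unnecessary, since $A=G_1^{-1}G_0$ for $e_0\,dt/t+e_1\,dt/(t-1)$ is already Drinfeld's convention (so $[e_0e_1]A=-\zeta(2)$), and it would not be harmless: the stuffle coproduct is not compatible with $e_1\mapsto-e_1$, so the invariance of Racinet's relations you assert fails for that substitution.
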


\begin{proof}
For the period associator, the proposition identifies
the Hodge comparison with the one giving double shuffle
in \cite[Subsection~3.3]{MarkarianDefect}.
For an arbitrary such series, apply
Corollary~\ref{cor:pentagram-pentagon} and
\cite[Theorem~1.1]{FurushoDoubleShuffle}.
\end{proof}

\subsection{Shuffle relations}
\label{subsec:shuffle}

In this subsection the two equations
\eqref{eq:upper-pentagram-relations} and
\eqref{eq:lower-pentagram-relations} are considered for arbitrary
formal series, without the group-likeness assumption of
Definition~\ref{def:pentagram-coefficient-relations}.
We impose the stronger normalization
\begin{equation}
A(e_0,0)=A(0,e_1)=1,
\label{eq:pure-power-normalization}
\end{equation}
which means that all nonconstant pure-power coefficients vanish.
For group-like series, this condition is equivalent to the
vanishing of the linear coefficients.

\begin{prop}\label{prop:five-cycle-shuffle}
Let $A\in\C\langle\!\langle e_0,e_1\rangle\!\rangle$
satisfy \eqref{eq:pure-power-normalization}, reversal
\eqref{eq:formal-reversal}, and the five-cycle equation
\eqref{eq:formal-five-factor-pentagon} in the completed residue
algebra \eqref{eq:five-residue-relations}. Then $A$ is group-like.
\end{prop}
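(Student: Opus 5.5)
Write $\Delta$ for the coproduct with $e_0,e_1$ primitive. The plan is to show $\Delta A=A\otimes A$ by applying $\Delta$ to the five-cycle equation \eqref{eq:formal-five-factor-pentagon}. The residue algebra \eqref{eq:five-residue-relations} is a completed enveloping algebra, so it carries a compatible coproduct with $a,\dots,e$ primitive. Every substitution $A(u,v)$ with $u,v\in\{a,\dots,e\}$ is a continuous algebra map compatible with $\Delta$. Put
\[
D(e_0,e_1)=(A\otimes A)^{-1}\Delta(A),
\]
a series in $\C\langle\!\langle e_0,e_1\rangle\!\rangle^{\hat\otimes2}$ with constant term $1$. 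The goal is $D=1$. I argue by induction on the total degree: suppose $D\equiv 1$ modulo degree $n$, and let $\delta$ be the degree-$n$ component of $D-1$.

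First I use reversal \eqref{eq:formal-reversal} and normalization. From $A(e_0,e_1)A(e_1,e_0)=1$, applying $\Delta$ and keeping the lowest order gives $\delta(e_0,e_1)+\delta(e_1,e_0)=0$. The normalization \eqref{eq:pure-power-normalization} gives $A(e_0,0)=1$, hence $\Delta A(e_0,0)=1$, so $\delta(e_0,0)=\delta(0,e_1)=0$. Thus $\delta$ has no pure-power part. Next I apply $\Delta$ to \eqref{eq:formal-five-factor-pentagon}. Each factor satisfies $\Delta(A(u,v))=(A\otimes A)(u,v)\,D(u,v)$, and at lowest order $D(u,v)$ is central modulo higher terms. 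Expanding the product then yields the linearized relation
\[
\delta(b,c)+\delta(d,e)+\delta(a,b)+\delta(c,d)+\delta(e,a)=0
\]
in degree $n$ of the tensor square of the residue algebra. This is the linearized pentagon for the tensor-valued element $\delta$.

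Now I extract $\delta=0$. Specialize the residue algebra by sending some generators to $0$, in a way compatible with \eqref{eq:five-residue-relations}. For example, set $d=e=0$. The relations then force $[a,c]=0$ and $[a,b]+[b,c]=0$, which makes the quotient generated by $a,b,c$ with these constraints. Under this map $\delta(d,e)\mapsto0$ and $\delta(c,d)\mapsto\delta(c,0)=0$, and $\delta(e,a)\mapsto\delta(0,a)=0$. So the relation becomes $\delta(a,b)+\delta(b,c)=0$. A second specialization, $c=0$ as well, leaves $\delta(a,b)=0$, provided the quotient with $c=d=e=0$ is free on $a,b$. Check: with $c=d=e=0$ the relations reduce to $[a,b]=0$. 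So this only gives the commutative image, which is too weak.

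\textbf{Main obstacle.} The hardest step is finding a specialization, or a filtration argument, that retains a free Lie algebra on two generators. I would try the following. Inside the residue Lie algebra, $r_{15}=e$ and $r_{25}=c-a-e$ generate a free Lie subalgebra, the fiber of the projection forgetting marking $5$. So I would apply the retraction onto this fiber. Forgetting marking $5$ sends $r_{i5}$ to a section and the other $r_{ij}$ to the base residues. In that image four of the five factors become substitutions of two commuting or pure-power arguments, where $\delta$ vanishes by the normalization above. The remaining factor becomes $\delta$ evaluated on two free generators, up to a linear change of variables that is absorbed by $\delta$ having no linear or pure part. If this works, it forces $\delta=0$, closing the induction. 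Reversal is used to orient the surviving factor correctly. If the retraction does not isolate a single factor, I would instead use the standard Drinfeld argument: the linearized pentagon for a $\mathfrak{t}_5$-valued $2$-cocycle, with pure parts zero, has only trivial solutions in the relevant grading.
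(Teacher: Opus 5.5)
\textbf{There is a genuine gap}, at exactly the step you flag as the main obstacle.

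Your set-up is sound. Applying $\Delta$ to \eqref{eq:formal-five-factor-pentagon} and linearizing does give $\delta(b,c)+\delta(d,e)+\delta(a,b)+\delta(c,d)+\delta(e,a)=0$, with $\delta$ antisymmetric and without pure-power part. But none of your proposed specializations extracts $\delta=0$ from this.

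\emph{The fiber retraction does not exist.} There is no Lie-algebra retraction of $\mathfrak p_5$ onto the fiber ideal generated by the $r_{i5}$. Such a retraction $\rho$ would force $[r_{12},f]=[\rho(r_{12}),f]$ for every fiber element $f$. Now $\operatorname{ad}r_{12}$ kills $r_{35}$ and $r_{45}$. In the free fiber algebra, the only element commuting with both $r_{35}$ and $r_{45}$ is $0$. So $\rho(r_{12})=0$, which would give $[r_{12},r_{15}]=0$; but $[e,a]\neq0$.

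\emph{The actual projection gives nothing new.} Forgetting marking $5$ sends $d,e\mapsto0$ and $c\mapsto\bar a$, since $\bar r_{34}=\bar r_{12}$ in $\mathfrak p_4$. Three factors drop, but two survive, and your relation collapses to $\delta(\bar b,\bar a)+\delta(\bar a,\bar b)=0$, i.e.\ reversal again. By rotation the same happens for all five forgetful projections. A factor drops only when one of its arguments becomes $0$; nothing you have shows that $\delta$ vanishes on commuting arguments. Killing enough generators to isolate a single factor forces the survivors to commute, as your $c=d=e=0$ attempt showed.

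\emph{The fallback is not citable.} There is no standard ``Drinfeld argument'' of the form you describe. Scalar series satisfying the linearized five-cycle, antisymmetry and vanishing pure parts do exist, for instance $[e_0,e_1]$ or any element of $\mathfrak{grt}_1$. So the vanishing of $\delta$ would have to come from its specific tensor structure, and you give no argument for that.

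\textbf{The missing idea} is to map the residue algebra into a direct sum of two free Lie algebras rather than into one. In $\mathbb C\langle\!\langle x,y\rangle\!\rangle\widehat\otimes\mathbb C\langle\!\langle u,v\rangle\!\rangle$, the substitution $(a,b,c,d,e)=(x+u,y,0,u,y+v)$ respects \eqref{eq:five-residue-relations}. It is the sum of the residue actions of two adjacent forgetful maps. The five-cycle becomes $A(y,0)A(u,y+v)A(x+u,y)A(0,u)A(y+v,x+u)=1$. By normalization and reversal this reads $A(x+u,y+v)=A(u,y+v)A(x+u,y)$.

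Setting $y=0$ shows that $A$ is unchanged when an element commuting with both arguments is added to its first argument. Reversal gives the same for the second argument. Hence $A(x+u,y+v)=A(u,v)A(x,y)$, which is literally $\Delta A=A\otimes A$.

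This is the paper's argument. It is nonlinear and immediate: the tensor square appears as the target of the substitution, so you need no coproduct on the residue algebra, no linearization and no induction on degree. Pushing your linearized relation along the same map would also give $\delta=0$, via primitivity in the tensor square and the counit conditions on $D$, but that detour is unnecessary.
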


\begin{proof}
Work in the completed tensor product
\[
\C\langle\!\langle x,y\rangle\!\rangle
\widehat\otimes
\C\langle\!\langle u,v\rangle\!\rangle.
\]
Thus $[x,u]=[x,v]=[y,u]=[y,v]=0$.
The substitution
\begin{equation}
(a,b,c,d,e)=(x+u,y,0,u,y+v)
\label{eq:shuffle-residue-specialization}
\end{equation}
respects the five disjoint-pair commutators in
\eqref{eq:five-residue-relations}. The remaining relation becomes
\[
[x,y]+[u,v]+[y+v,x+u]=0.
\]
The five-cycle therefore gives
\[
A(y,0)A(u,y+v)A(x+u,y)A(0,u)A(y+v,x+u)=1.
\]
Normalization and reversal reduce this to
\begin{equation}
A(x+u,y+v)=A(u,y+v)A(x+u,y).
\label{eq:shuffle-mixed-identity}
\end{equation}

Set $y=0$. The same identity gives
\begin{equation}
A(x+u,v)=A(u,v),\qquad [x,u]=[x,v]=0.
\label{eq:shuffle-central-shift}
\end{equation}
Consequently $A$ is unchanged on adding to its first argument
an element commuting with both arguments. Reversal gives
the analogous identity for the second argument:
\[
A(u,v+z)=A(v+z,u)^{-1}=A(v,u)^{-1}=A(u,v)
\quad\text{if }[z,u]=[z,v]=0.
\]
These identities apply to both factors on the right of
\eqref{eq:shuffle-mixed-identity}. Hence
\begin{equation}
A(x+u,y+v)=A(u,v)A(x,y)=A(x,y)A(u,v).
\label{eq:shuffle-tensor-identity}
\end{equation}
The left side is $\Delta A$ for the coproduct with primitive
generators, and the right side is $A\otimes A$. This proves
group-likeness. Equivalently, if $A_w$ denotes a word coefficient,
\[
A_rA_s=\sum_{w\in\operatorname{Sh}(r,s)}A_w,
\]
where $\operatorname{Sh}(r,s)$ counts the shuffles of $r,s$
with their multiplicities.
\end{proof}

\begin{cor}[Pentagram and shuffle]
\label{cor:pentagram-shuffle}
Let $A$ satisfy \eqref{eq:pure-power-normalization}, reversal,
and both pentagram coefficient equations. Then $A$ satisfies
shuffle and regularized double shuffle.
\end{cor}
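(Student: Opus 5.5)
The plan is to reduce to the five-cycle equation \eqref{eq:formal-five-factor-pentagon} without assuming group-likeness, and then apply Proposition~\ref{prop:five-cycle-shuffle}. Group-likeness is exactly the shuffle relation. Once it is established, regularized double shuffle follows from Corollary~\ref{cor:pentagram-double-shuffle}, that is, from the pentagon equation together with \cite[Theorem~1.1]{FurushoDoubleShuffle}. So the core task is to derive the five-cycle equation from the two pentagram coefficient equations, reversal, and the pure-power normalization \eqref{eq:pure-power-normalization}.

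For this I will rerun the proof of Corollary~\ref{cor:pentagram-pentagon} and check where group-likeness was used. That argument rests on the formal endpoint identity of Appendix~\ref{app:formal-endpoint-identity}, in the form
\[
S_A A(-b,-c)=K_\Phi\,\tau(S_A),
\]
and on the telescoping $\prod_{i=0}^4\tau^i(K_\Phi)=1$. I would first verify that the endpoint identity is a formal consequence of the explicit column \eqref{eq:explicit-pentagram-column}, the residue relations \eqref{eq:five-residue-relations}, the reflection relation $\sigma(S_A)=S_A$, and reversal. The identity is linear in the coefficients of $A$ on the left factor of $S_A$ and involves $A(-b,-c)$ multiplicatively, so any use of group-likeness would enter through the normalization of low-degree terms. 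The natural replacement is \eqref{eq:pure-power-normalization}. Under it, the pure-power contributions to $\mathscr F_A$ vanish, in particular the terms $\mathscr F_A(X,Y)$ built only from $Y$ and the terms $\mathscr F_A(b,c)$ built only from $c$. Granting the endpoint identity, the iteration proceeds exactly as before: the upper entry of $S_A$ has constant term $1$, so it can be cancelled, and negating the residues gives \eqref{eq:formal-five-factor-pentagon}.

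If the quadratic coefficient $\kappa$ is nonzero, there is a shorter route through Corollary~\ref{cor:lower-pentagram-pentagon}: the lower relation alone gives $A(-b,-c)=L_A^{-1}\tau(L_A)$, provided the lower row of the endpoint identity holds without group-likeness. The statement includes $\kappa=0$, however, so the general route through the upper entry is the one I would rely on.

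The main obstacle is the endpoint identity for non-group-like $A$. In the appendix it may have been derived using $\rev(T_A^{-1})=A$ or the antipode, both of which presuppose group-likeness. If so, my first attempt would be a direct proof. I would compare coefficients of a fixed word in $A$ on both sides and use that $\tau$ acts on $(a,b,c,d,e)$ by rotation. The aim would be to show that the only terms that fail to cancel are pure powers, which vanish by \eqref{eq:pure-power-normalization}. Reversal \eqref{eq:formal-reversal} would supply the inverse $A(-b,-c)^{-1}=A(-c,-b)$ needed to move factors across.
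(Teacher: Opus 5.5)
Your skeleton is the right one: endpoint identity, telescoping with $\prod_{i=0}^4\tau^i(K_\Phi)=1$, cancelling the invertible upper entry, then Proposition~\ref{prop:five-cycle-shuffle} for shuffle and Corollary~\ref{cor:pentagram-double-shuffle} for double shuffle. However, the key step uses the wrong factor, and the repair you suggest cannot work. The identity $S_AA(-b,-c)=K_\Phi\tau(S_A)$ is \emph{not} available before group-likeness. With $T=\rev A(b,c)$, the endpoint calculation produces $T^{-1}$. The passage $T^{-1}=A(-b,-c)$ is exactly the antipode identity, and that is where group-likeness enters.

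The defect $A(-b,-c)-(\rev A(b,c))^{-1}$ is not concentrated on pure powers. In weight three it equals $(\rev A_3-A_3)(b,c)$. Reversal together with \eqref{eq:pure-power-normalization} allows, for example, $A=\exp(L)$ with $L=\mu(e_0e_1e_1-e_1e_0e_0)$, for which this is nonzero on mixed words. Forcing it to vanish from the reflection relation would mean proving part of group-likeness first, which is circular. In fact \eqref{eq:pure-power-normalization} plays no role in the endpoint identity. It is used only inside Proposition~\ref{prop:five-cycle-shuffle}, to remove the factors $A(y,0)$ and $A(0,u)$. The same objection applies to your $\kappa\ne0$ shortcut, which also relies on the group-like form of the lower row.

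The missing idea is to keep the factor that the calculation actually produces. Put $B_A(b,c)=\rev A(c,b)$. Reversal gives $B_A(b,c)=(\rev A(b,c))^{-1}$. Projecting $TT^{-1}=1$ onto words beginning with $b$ then gives $(1+c\mathscr F_A(b,c))B_A(b,c)=1+b\mathscr F_A(c,b)$, using reversal only. Combined with $P_A=K_\Phi\varrho(P_A)$ and $\sigma(S_A)=S_A$, this yields $S_AB_A(b,c)=K_\Phi\tau(S_A)$, which is \eqref{eq:formal-column-endpoint-general}. Telescoping and cancelling the upper entry give $\prod_{i=0}^4\tau^i(B_A(b,c))=1$.

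The final step also differs from yours: negating residues does not help here. Instead, apply word reversal. It preserves the defining ideal of the residue algebra, because it negates commutators. Then use $\rev B_A(s,t)=A(t,s)=A(s,t)^{-1}$. This turns the product into $\bigl(A(b,c)A(d,e)A(a,b)A(c,d)A(e,a)\bigr)^{-1}=1$, which is the five-cycle equation required by Proposition~\ref{prop:five-cycle-shuffle}. From there your concluding steps go through.
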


\begin{proof}
Put $B_A(b,c)=\rev A(c,b)$. The endpoint calculation
\eqref{eq:formal-column-endpoint-general}, which uses reversal
but not group-likeness, gives
\[
S_A B_A(b,c)=K_\Phi\tau\sigma(S_A)=K_\Phi\tau(S_A).
\]
Iterating five times and using
$\prod_{i=0}^4\tau^i(K_\Phi)=1$ gives
\[
S_A\prod_{i=0}^4\tau^i\bigl(B_A(b,c)\bigr)=S_A.
\]
The upper entry of $S_A$ has constant term $1$, so it can
be cancelled. Word reversal preserves the defining ideal
of the residue algebra. Applying it to the resulting identity
and using $\rev B_A(s,t)=A(t,s)=A(s,t)^{-1}$ gives
\[
\bigl(A(b,c)A(d,e)A(a,b)A(c,d)A(e,a)\bigr)^{-1}=1.
\]
Proposition~\ref{prop:five-cycle-shuffle} now gives shuffle.
Having established group-likeness, apply
Corollary~\ref{cor:pentagram-double-shuffle}.
\end{proof}

\begin{rem}
The substitution \eqref{eq:shuffle-residue-specialization}
is the residue action on a tensor product of pullbacks
along two adjacent forgetful maps, $p_4$ and $p_3$.
On the first factor its five residues are $(x,y,0,0,y)$;
the formulas following \eqref{eq:five-residue-generators}
give $r_{i4}=0$. On the second factor they are $(u,0,0,u,v)$,
and $r_{i3}=0$. Adding the two actions gives
\eqref{eq:shuffle-residue-specialization}.
Thus \eqref{eq:shuffle-tensor-identity} expresses tensor
compatibility on these coefficient systems, in parallel
with the convolution specialization of the preceding subsection.
\end{rem}

\begin{rem}
If $[e_0e_1]A\ne0$, the lower pentagram equation alone
suffices in Corollary~\ref{cor:pentagram-shuffle}:
the lower entry of $S_A$ is then invertible, and its
endpoint identity gives the same five-cycle by cancellation.
\end{rem}

\appendix

\section{The second column of the Betti comparison}
\label{app:betti-comparison}

We prove the second-column formula in
Proposition~\ref{prop:comparison-betti-presentations}.
Keep $p=p_3$, $f=1-t$, and the displaced disk $\Sigma$ of
Subsection~\ref{subsec:two-dimensional-shrinking}. The coefficient
fiber $V_b=V_{q_c}$ identifies $S_\Sigma(V)$ with $V_b$;
transports inside the disk are independent of the path.
The fiber $V_f$ is used in the relative-cocycle presentation
of Subsection~\ref{subsec:projection-vanishing-cycles}:
\[
[c(z,q)]\longleftrightarrow(z,q)\in V_f\oplus V_f,
\qquad
E=V_f,\quad Q=V_f.
\]
In these coordinates,
\[
a(z)=(z,0),\qquad \rho(z,q)=q,\qquad
\can(z,q)=((T_*-1)z,q),
\]
where the last pair is read in the presentation
$\vartheta_\ell:E\oplus Q\xrightarrow{\sim}\Phi$.
We also retain
\[
\var\iota=a,\qquad \var\can=M_1-1,
\]
with $M_1$ the positive base monodromy at $t=1$.

The calculation has two parts. We first describe the fiber
traces of the displaced disk and compute their relative
cocycles with common coefficient paths. We then fold the
two sides of the base onto $I=(0,1]$. The correction away
from $t=1$ is a coboundary near $t=0$, while the normal cell
on $C_!$ determines the quotient coordinate in $Q$.
Together with the nearby trace, this determines the
vanishing-cycle class.

\subsection{Coefficient paths and fiber cocycles}

Choose small neighborhoods of the three singular values
and contractible complex bands around the remaining real
arcs. Over each band, a smooth trivialization of $p$
preserving its four marked sections lifts retraction
onto the real arc. Taper this retraction to the identity
near the singular neighborhoods and transport coefficients
through the homotopy. This preserves $\varphi_\Sigma$
and supplies fiber traces on the real arcs, while retaining
the pieces near singular fibers. The deformed representative
need not be an embedded disk.

All local models use the outward normal signs of
Subsection~\ref{subsec:two-dimensional-shrinking}.
Interpolation in these strict normal half-spaces avoids
$D_!$ and preserves the relative boundary in $D_*$,
so the supported class is unchanged.

On the smooth locus choose one horizontal connection
preserving the marked sections. Use it for both folds
and all coefficient transports. We prescribe it
along the fiber isotopy below and extend it to the
remaining bands. In every homotopy we retain the complete
prism chain, including its cutting and endpoint faces.
Internal faces then cancel by
$\partial\mathsf P+\mathsf P\partial=H_1-H_0$;
faces on $D_*$ are read as relative faces.

Take
\[
t_+=\frac32,\qquad t_0=\frac34.
\]
The corresponding fiber punctures are
\[
-2,0,1,\infty
\quad\text{and}\quad
0,1,4,\infty,
\]
respectively. We identify the fiber cohomology at $t_0$
with $\Psi$ by transport along the real interval towards
$t=1$.

In the fiber at $t_+$, let $A_+$ be the inner arc
$0\to1$, and let $B_+$ be the outer arc
\[
1\longrightarrow+\infty
\longrightarrow-\infty\longrightarrow0.
\]
The detours are those inherited from the displacement
of $\Sigma$: the detour at $\lambda=-2$ is upper in $x$,
and the detour at infinity is upper in $1/x$.
With the orientation
\[
\eta=
\frac{dt\wedge dx}{t(1-t)x(1-x)},
\]
the required fiber trace over $t>1$ is $B_+-A_+$.

The arc $A_+$ lies in
$P_{14325}$, whose two infinity
edges have the lower bypass. In its stated orientation,
$B_+$ passes successively through
\[
P_{13425}\ \xrightarrow{\ D_{13}\ }
P_{13524}\ \xrightarrow{\ D_{35}\ }\ P_{14235}.
\]
These are the intervals $(1,+\infty)$, $(-\infty,\lambda)$,
and $(\lambda,0)$, respectively. Their infinity edges have
the upper bypass.

These signs must be read in normal coordinates on the
compactification. For example, near $D_{13}\cap D_{24}$
put
\[
\nu=1/y,\qquad v=y/x=1-t,\qquad u=1/x=\nu v.
\]
Here $D_{13}=\{\nu=0\}$ and $D_{24}=\{v=0\}$.
The outward $D_{13}$ displacement has
$\operatorname{Im}\nu<0$. Its infinity bypass is therefore
upper in $u$ for $t>1$, and lower in $u$ for $0<t<1$.
At $D_{35}$, the outward displacement has
$\operatorname{Im}(y-1)<0$; since $y=(1-t)x$, its bypass
is upper in $x$ for $t>1$ and lower for $0<t<1$.
Consequently the three-chamber trace over $I$ is the
oriented outer arc
\[
0\longrightarrow-\infty\longrightarrow+\infty
\longrightarrow1,
\]
with a lower $x$-bypass at $\lambda$ and a lower
$u$-bypass at infinity. In particular, its infinity
detour lies in the upper $x$-half-plane.
In this orientation the three chambers are
\[
P_{13245}\ \xrightarrow{\ D_{13}\ }
P_{12453}\ \xrightarrow{\ D_{35}\ }\ P_{12435},
\]
corresponding to $(-\infty,0)$, $(\lambda,+\infty)$,
and $(1,\lambda)$, traversed with decreasing $x$.

Along $D_{25}$ the transition
$1/y=(1/x)(x/y)$ similarly changes the sign of the
$1/y$-normal on passing through $D_{13}$. It gives
the upper base bypass on both outer branches, $x<0$
and $x>1$, and the lower bypass for $0<x<1$.

Denote the lower and upper transports from $t_+$ to
$t_0$ by $\mathcal T_-$ and $\mathcal T_+$. With positive
base monodromy,
\[
\mathcal T_+=M_1\mathcal T_-.
\]
For the lower transport we use
\[
t_\theta=
\frac{3e^{i\theta}}{3e^{i\theta}-1},
\qquad
\lambda_\theta=1-3e^{i\theta},
\qquad 0\leq\theta\leq\pi.
\]
It is realized on the fibers by
\[
x\longmapsto
1+(x-1)e^{i\theta\chi(|x-1|)},
\]
where $\chi$ is zero for $r\leq2$ and $r\geq4$,
equals $1$ at $r=3$, and is linear on the intervening
intervals. This isotopy fixes $[0,1]$ and infinity.

In the target fiber choose the coefficient base point
$x=\frac12$. Use short paths to $0$ and $1$.
For the two other points use a shared stem from
$\frac12$ to $\frac12+6i$. The path to infinity continues
upwards; the path to $\lambda=4$ runs right to
$\frac{17}4+6i$, descends to $\frac{17}4+\frac14i$,
and approaches a small
peripheral circle about $4$ from its upper-right quadrant.
Take the infinity detours outside a circle containing
this finite part of the paths. These paths have cyclic
order $0,1,\lambda,\infty$ and give the meridian relation
\[
g_0g_1g_\lambda g_\infty=1.
\]

There are four coefficient transports
\[
h_I,h_A,h_B,h_Q:V_b\longrightarrow V_f.
\]
Their coefficients are read at the following intersections:
\[
\begin{array}{c|l}
h_I&
\text{the $g_1$-intersection of the class over }I,
\\
h_A&
\text{the $g_1$-intersection of }\mathcal T_-A_+,
\\
h_B&
\text{the $g_1$-intersection of }\mathcal T_-B_+,
\\
h_Q&
\text{the remaining intersection of }\mathcal T_-B_+
\text{ with the path to }\lambda.
\end{array}
\]
Transport coefficients first through $\Sigma$, then along
the stated fiber isotopy and coefficient path. The cocycles
below include the intersection signs.

For $b\in V_b$, let $\xi_{I,b}$ be the fiber class of
$\Sigma$ over $t_0$, and put
\[
\xi_b=\mathcal T_-B_+(b).
\]
Before restoring the gauge $w_\lambda=0$, the relevant
entries are the following. The $z_1$ column records the
signed meridian intersections; the other two columns
record the relative terms on the chosen peripheral paths.
Thus the quotient coordinate is $w_\infty-w_\lambda$.
\[
\begin{array}{c|ccc}
&z_1&w_\lambda&w_\infty\\ \hline
\xi_{I,b}&h_Ib&0&-h_Qb\\
\xi_b&-h_Bb&h_Qb&0.
\end{array}
\]
For $\xi_{I,b}$, the path to $\lambda$ misses the support,
the path to infinity meets its upper detour once negatively,
and the short meridian at $1$ gives $+h_Ib$. For $\xi_b$,
that meridian gives $-h_Bb$, and the descending branch
towards $\lambda$ meets the transported arc once positively.
The two further intersections on the shared stem bound
a region without punctures. Their opposite signs and
homotopic coefficient paths make their contributions cancel.

The equality of the peripheral coefficients in the two
rows uses continuation along $D_{24}$. At the mixed node
put $u=1/x$, so $f=uy$, $C_!=\{u=0\}$ and
$C_*=\{y=0\}$. On the two branches of the cut disk write
$y=\delta r$, where $\delta=\pm1$ and $r\geq0$.
Choose a positive function $k(r)$ with $k(0)>0$ and
$k(r)=r$ outside a small node neighborhood. The branchwise
displacements
\begin{equation}
F_\delta(u,r)=
\bigl(u+i\epsilon\delta k(r),\,\delta r\bigr)
\label{eq:mixed-node-displacement}
\end{equation}
give
\[
f\circ F_\delta=\delta ru+i\epsilon r k(r).
\]
Both branches use the upper $f$-half-plane, hence the lower
$t$-half-plane. Their distinct node endpoints
$(i\epsilon\delta k(0),0)\in D_*\setminus D_!$
avoid the removed node.

We check the coefficient paths themselves in coordinates
$(f,y)$, where the fiber punctures are $0,f,1,\infty$.
The surviving intersection with the path to $\lambda$
comes from a point $x=1+r$ of $B_+$, with $3<r<4$.
Put $\mu=4-r$. Under the specified lower transport,
\[
f_\theta=\frac1{1-3e^{i\theta}},\qquad
y_\theta=\frac{1+r e^{i\mu\theta}}{1-3e^{i\theta}}.
\]
For $0<\theta<\pi$, the imaginary numerator of the
second expression after rationalization is
\[
3\sin\theta+r\sin(\mu\theta)
 +3r\sin((1-\mu)\theta)>0.
\]
Moreover,
\[
|f_\theta|\leq\tfrac12,\qquad
|y_\theta|\geq\frac{r-1}{4}>\tfrac12.
\]
Choose $\rho_0$ strictly between $1/2$ and $(r-1)/4$.
After a common initial path in $\Sigma$, compare the
two coefficient paths from $f=-1/2$, $y>1$ to the top
of the shared stem. The disk continuation uses the upper
$f$ bank. The $B_+$ continuation first passes through
the $D_{13}$ bypass, which is upper in $y$, and then
follows the displayed isotopy. Both paths can therefore
be taken in the product of
\[
\{0<|f|<\rho_0,\ \operatorname{Im}f\geq0\}
\quad\text{and}\quad
\{\operatorname{Im}y>0,\ |y|>\rho_0\},
\]
with small neighborhoods added at their real endpoints.
The factors are contractible and the product avoids
$y=0,f,1$ and $f=0,1$. The paths are therefore homotopic
in $U$ relative to their endpoints, and their final paths
down the shared stem coincide. Thus the same transport
$h_Q$ occurs in both rows for arbitrary $V$.

Consequently, the relative cocycles are
\begin{equation}
\label{eq:comparison-fiber-cocycles}
\begin{aligned}
\xi_{I,b}
 &=c(h_Ib,-h_Qb),\\
\xi_b
 &=c\bigl(-h_Bb-(T_1-1)h_Qb,-h_Qb\bigr),\\
\mathcal T_-A_+(b)
 &=a(h_Ab).
\end{aligned}
\end{equation}
Here $T_1$ is the fiber monodromy about $x=1$.
The term $(T_1-1)h_Qb$ comes from restoring the gauge
$w_\lambda=0$.

\subsection{Folding the supported class}

\begin{proof}[Proof of Proposition~\ref{prop:comparison-betti-presentations}]
The cut constructed below separates the upper and lower
folds. We keep its endpoint faces until their compact-support
correction has been computed, and then read the nearby
trace and the relative normal cell at $t=1$.

We first join the two folds at infinity. Use the base
coordinate
\[
w=\frac{t}{1-t}=x\nu-1,\qquad \nu=1/y.
\]
Near $D_{14}\cap D_{25}$ the displacement is
\[
(x,\nu)=(s_1+i\epsilon,s_2-i\epsilon),\qquad
w=s_1s_2+\epsilon^2-1+i\epsilon(s_2-s_1).
\]
The cut $s_1=s_2$ separates the upper and lower
base half-planes. On it,
\[
w=s_1^2+\epsilon^2-1\in(-1,0)
\]
for a sufficiently small node neighborhood.

At the cut endpoints $s_1=\pm a$, put
$\lambda_c=a^2+\epsilon^2<1$. The fiberwise rotation
\[
x=\sqrt{\lambda_c}e^{i\theta},\qquad
\nu=\sqrt{\lambda_c}e^{-i\theta}
\]
joins them to real points by taking $\theta$ to $0$
on the positive branch and to $\pi$ on the negative
branch. It keeps $x\nu=\lambda_c$ fixed. Continue the
cut along
\[
x=\pm\sqrt\lambda,\qquad y=\pm\lambda^{-1/2},
\qquad \lambda_c\leq\lambda\leq1.
\]
Its two ends lie on $D_{12}$ and $D_{45}$ over $t=0$.
The prescribed fiber rotations extend to a thin strip
by relative isotopy, since they avoid $D_!$ and keep
the endpoints on $D_*$. We use this further deformation
of the representative. The cut is then the image of
a proper arc in the disk. It separates
the two pieces whose traces over $t>1$ are $B_+$
and $A_+$, respectively; the bypass signs calculated
above specify the upper fold on the first and the
lower fold on the second.

Away from the mixed-fiber neighborhood, the folds
have the explicit base formulas
\[
H_s^\pm(w)=|w|\exp\bigl(i(1-s)\arg_\pm w\bigr),
\qquad 0\leq s\leq1,
\]
where the arguments approach $\pm\pi$ on the two banks
of the negative axis. They end on the positive
$w$-axis, hence on $I$, and are lifted with the common
horizontal connection fixed above. The infinity collar
lies in the corresponding open half-planes, so these
folds avoid $w=-1$. For $t>1$ we have $|w|>1$.
The loop comparing the upper and lower folds is the
positive meridian about $t=1$, giving
$\mathcal T_+=M_1\mathcal T_-$.
The piece in the mixed-fiber neighborhood is retained;
its normal cell is calculated below.

The straightened real strips over $t<0$ fold into
$0<t<\frac12$. The displaced pieces in the infinity
neighborhood need not satisfy this precise bound.
Their $w$-coordinates remain bounded, however, since
that fixed neighborhood avoids the mixed fiber, where
$w$ has its pole. Their folds therefore end on a
bounded segment of the positive $w$-axis. All these
pieces contribute no germ at $t=1$; their cutting and
endpoint faces are retained in the completed correction.

On the cut, $|w|<1$, and the two folds bound the disk
\[
w\longmapsto\zeta w,\quad |\zeta|\leq1,
\qquad
t\longmapsto\frac{\zeta t}{1-t+\zeta t}.
\]
For the fixed nonzero displacement, the cut and its
continuations satisfy $|w|\leq1-\epsilon^2$.
This bounds the cut filling; it does not bound every
displaced piece. After folding, the other retained
pieces lie on a bounded positive real segment $[0,R_c]$.
Choose $\rho_c$ with $1-\epsilon^2<\rho_c<1$ and let $B_0$
be a relatively compact contractible neighborhood of
\[
\{|w|\leq1-\epsilon^2\}\ \cup\ [0,R_c],
\]
obtained by adjoining a thin neighborhood of $[0,R_c]$
to $\{|w|<\rho_c\}$. It can be chosen to avoid $w=-1$.
The completed correction, including its endpoint faces,
has compact support in $B_0$. This neighborhood contains
$t=0$ and avoids $t=1,\infty$.

We calculate its endpoint faces. Put
$\Xi=\Psi_t(\mathcal F)$, the nearby-cycle space at
$t=0$, and $D_0=M_0-1$. On $B_0$, $\mathcal F$ is a
derived $*$-extension. Compute compact support using a
closed disk relative to its boundary circle. Before the
perverse shift, restriction from the disk to its boundary
identifies both cochain complexes with the circle complex
$[\Xi\xrightarrow{D_0}\Xi]$. The compact-support complex
is therefore the cone of this identity, with the perverse
shift taken into account. Ordering its degree-zero terms
as the cut face and the stalk term gives
\[
\Xi\xrightarrow{\ h\mapsto(D_0h,h)\ }
\Xi\oplus\Xi
\xrightarrow{\ (z,h)\mapsto z-D_0h\ }\Xi.
\]
The terms have degrees $-1,0,1$. This complex is contractible:
a contracting homotopy is $(z,h)\mapsto h$ in degree zero
and $k\mapsto(k,0)$ in degree one.
Read the coefficient $h$ on the upper bank and orient
the angular jump from the upper bank to the lower bank.
For $|w|<1$ the lower-bank coefficient is $M_0h$;
equivalently, the upper transport is $M_0^{-1}$ times
the lower transport. The angular endpoint difference
is therefore $D_0h$. The other cutting faces cancel
as faces of the common prism chain. Consequently the
actual residual term is
\begin{equation}
(D_0h,h)=d(h).
\label{eq:zero-collar-coboundary}
\end{equation}
The same holds for sums of strips with transported
coefficients. It removes the correction, including its
monodromy-invariant part, without adding a first-summand term.

Folding reverses the base orientation on $t>1$, so its
trace $B_+-A_+$ contributes $-M_1\xi_b+a(h_Ab)$.
It follows that near the right end of $I$ the folded
nearby-cycle class is
\[
\xi_{I,b}-M_1\xi_b+a(h_Ab).
\]
Put
\[
e_b=h_Ib+h_Bb+(T_1-1)h_Qb.
\]
Equation~\eqref{eq:comparison-fiber-cocycles} gives
$\xi_{I,b}-\xi_b=a(e_b)$, so the preceding expression is
\begin{equation}
\label{eq:comparison-nearby-trace}
a(e_b+h_Ab)-(M_1-1)\xi_b.
\end{equation}

We also compute the relative term on $C_!=D_{24}$.
The orientation near the mixed node is
\[
\eta=\frac{du\wedge dy}{y(1-uy)(1-u)}
     =\frac{df\wedge dy}{y^2(1-f)(1-u)}.
\]
Both branches of \eqref{eq:mixed-node-displacement}
have the same product orientation in $(f,y)$;
passing to $t=1-f$ gives the negative normal cell.
Cut the normal annulus along the ray $f>0$ and read
coefficients on the bank determined by the lower
continuation. The two banks have coefficients
$\rho\xi_b$ and $T_!\rho\xi_b$, where $T_!$ is normal
monodromy on the component term. The signed relative
cell has coefficient $-\rho\xi_b$ and boundary
\[
(1-T_!)\rho\xi_b.
\]
Since the original extension is $!$ along this component,
the Milnor map $\can$ identifies its normal
nearby and vanishing terms. The quotient coordinate of
the folded class is consequently
\begin{equation}
\label{eq:comparison-quotient}
-\rho\xi_b=h_Qb.
\end{equation}
This records the relative cell itself, including its
monodromy-invariant part.

In the pushout complex, a negative normal cell with
nearby coefficient $\xi=c(z,q)$ is explicitly
\[
[(-\xi,0)]=-\can\xi=(-\delta_*z,-q).
\]
The term $-\delta_*z$ already records its face on the
$*$-component. The separated node endpoints of
\eqref{eq:mixed-node-displacement} lie in the relative
boundary and introduce no further face.

The map
\[
(\var,\pi):
\Phi\longrightarrow\Psi\oplus Q
\]
is injective: its kernel lies in $\iota E$, and
$\var\iota=a$ is injective. Since $\var\can=M_1-1$
and $\pi\can=\rho$, equations~\eqref{eq:comparison-nearby-trace}
and~\eqref{eq:comparison-quotient} therefore give
\[
\varphi_I^{-1}\varphi_\Sigma(b)
=
\iota(e_b+h_Ab)-\can\xi_b.
\]
Finally, applying
$\can(z,q)=((T_*-1)z,q)$ to
\eqref{eq:comparison-fiber-cocycles} yields
\[
\vartheta_\ell^{-1}\varphi_I^{-1}\varphi_\Sigma(b)
=
\left(
\bigl[h_I+h_A+T_*h_B+T_*(T_1-1)h_Q\bigr]b,\,
h_Qb
\right).
\]
Together with the main-pentagon identification, this
proves the proposition.
\end{proof}

\section{Logarithmic calculations}
\label{app:logarithmic-calculations}

\subsection{The logarithmic direct image}
\label{app:logarithmic-direct-image}

We verify \eqref{eq:explicit-outer-gluing} and
\eqref{eq:explicit-component-gluing} in the conventions
of Subsection~\ref{subsec:associator-formula}.
In this calculation $V$ denotes the graded de Rham
coefficient space of the universal coefficient object.

\begin{proof}
Use the coordinates $(0,\infty,1,x,y)$ and write the
universal connection as $\nabla=d-\Omega$, where
\[
\Omega=r_{14}\,d\log x+r_{34}\,d\log(x-1)
 +r_{15}\,d\log y+r_{35}\,d\log(y-1)
 +r_{45}\,d\log(x-y).
\]
Substitute $y=(1-t)x$ and $\lambda=(1-t)^{-1}$.
For this calculation put
\[
\begin{gathered}
\alpha=r_{23},\quad\beta=r_{34},\quad\gamma=r_{35},\\
s=\alpha+\beta+\gamma=-r_{13},\quad
\delta=r_{15}+r_{35},\quad\eta=r_{45}.
\end{gathered}
\]
Then
\[
\Omega=\alpha\,d\log x+\beta\,d\log(x-1)
 +\gamma\,d\log(x-\lambda)
 +\delta\,d\log(1-t)+\eta\,d\log t.
\]
We use signed peripheral coordinates: $w_\lambda,w_\infty$
are the negatives of the local primitive coordinates
in the usual mapping fiber. With the common sign of
the degree-zero term fixed accordingly, the global
relative logarithmic complex is
\begin{equation}
C^0=V,\qquad
C^1=V(-1)^{\oplus3}\oplus V^{\oplus2},\qquad
C^2=V(-1)^{\oplus2},
\label{eq:relative-logarithmic-fiber-complex}
\end{equation}
with differentials
\[
\begin{aligned}
d^0(v)&=(\alpha v,\beta v,\gamma v;v,v),\\
d^1(z_0,z_1,z_\lambda;w_\lambda,w_\infty)
 &=\bigl(z_\lambda-\gamma w_\lambda,\,
 z_0+z_1+z_\lambda-s w_\infty\bigr).
\end{aligned}
\]
Each residue has Tate degree one, so both differentials
preserve the total grading.
Every class has a unique representative with
$w_\lambda=0$, namely
\[
(sq-z,z,0;0,q).
\]
Consequently its first cohomology is $V(-1)\oplus V$,
with
\[
F^pH^1=F^{p-1}V\oplus F^pV,\qquad
W_mH^1=W_{m-2}V\oplus W_mV.
\]
The corresponding differential-form representative is
\[
\omega(z,q)=(sq-z)\frac{dx}{x}
       +z\frac{dx}{x-1},\qquad w_\lambda=0,
\quad w_\infty=q.
\]
Put $B_\lambda=\eta/(\lambda-1)-(\delta+\eta)/\lambda$.
The moving boundary $x=\lambda$ contributes the value of
the fiber form on $\partial_x$. Differentiating the cocycle
therefore produces the boundary value
\[
w_\lambda'=\omega(z,q)(\partial_x)|_{x=\lambda}
 =\frac{sq}{\lambda}+\frac{z}{\lambda(\lambda-1)}.
\]
Put
\[
B_x=\frac{\alpha}{x}+\frac{\beta}{x-1}
          +\frac{\gamma}{x-\lambda},\qquad
h=\frac{sq}{\lambda}+\frac{z}{\lambda(\lambda-1)}.
\]
Restoring $w_\lambda=0$ shows that the coefficient of
$\Omega_{\mathrm{GM}}$ is represented by the form
\[
B_\lambda\omega-\frac{\gamma\omega}{x-\lambda}+B_xh\,dx.
\]
Its pole at $x=\lambda$ cancels. The residue at $x=1$
and the retained peripheral entry at $\infty$ give
\[
z'=\left(B_\lambda+\frac{\gamma}{\lambda-1}\right)z+\beta h,
\qquad q'=B_\lambda q+h.
\]
Compatibility with the residue at $\infty$ uses
$[s,B_\lambda]=0$, which follows from
$[r_{13},r_{45}]=[r_{13},r_{15}+r_{35}]=0$.
Thus the Gauss--Manin connection is
$d-\Omega_{\mathrm{GM}}$, with
\[
\Omega_{\mathrm{GM}}=
\begin{pmatrix}
B_\lambda+\dfrac{\gamma}{\lambda-1}
 +\dfrac{\beta}{\lambda(\lambda-1)}
 &\dfrac{\beta s}{\lambda}\\[6pt]
\dfrac1{\lambda(\lambda-1)}&B_\lambda+\dfrac{s}{\lambda}
\end{pmatrix}d\lambda.
\]
The residues of $\Omega_{\mathrm{GM}}$ at $t=0,1$ are $X,Y$ in
\eqref{eq:explicit-outer-gluing}. They have total Tate degree
one, so the limiting weight filtration is unchanged; the
Hodge filtration extends in the trivial logarithmic bundle.
Thus nearby cycles have the same displayed filtrations.

For vanishing cycles, apply logarithmic localization
to the same relative complex. In logarithmic variation
coordinates, the normal map $\can$ on $D_{15}$ is
multiplication by its residue $r_{15}$, so the logarithmic
version of the pushout of
Subsection~\ref{subsec:projection-vanishing-cycles} is the cokernel of
\[
V(-1)\longrightarrow
  [V(-1)\oplus V]\oplus V(-2),\qquad
u\longmapsto((u,0),-eu).
\]
The quotient map $[(z,q,v)]\mapsto(v+ez,q)$ is graded
and identifies $\Phi_{\mathrm{dR}}$ with $V(-2)\oplus V$
preserving $F$ and $W$. In these coordinates
$\can=c_{\mathrm{out}}=\operatorname{diag}(r_{15},1)$.
Logarithmic variation sends the first summand to the
interval class, so $v_{\mathrm{out}}J=\binom10$.
Together with $v_{\mathrm{out}}c_{\mathrm{out}}=Y$,
this gives the displayed $v_{\mathrm{out}}$.

Finally, taking the normal limit with
$f=y/x$ on $D_{15}$ gives
\[
\Omega=r_{15}\,d\log f+r_{23}\,d\log x
             +r_{34}\,d\log(x-1),
\]
which proves \eqref{eq:explicit-component-gluing}.
\end{proof}

\subsection{Formal endpoint and frame identities}
\label{app:formal-endpoint-identity}

Let $A$ be a formal series with constant term $1$ satisfying
\eqref{eq:formal-reversal}; group-likeness is not assumed.
First work in the free completed
algebra $\mathbb C\langle\!\langle b,c\rangle\!\rangle$,
before substituting the residue generators. Put
\[
T=\rev A(b,c)=1+bT_b+cT_c,\qquad
T^{-1}=1+bU_b+cU_c.
\]
Put $B_A(b,c)=\rev A(c,b)$. Reversal gives
\[
T^{-1}=B_A(b,c).
\]
Projecting $TT^{-1}=1$ onto words beginning with $b$
gives $U_b+T_bT^{-1}=0$. Hence
\[
(1+cT_c)T^{-1}
 =(T-bT_b)T^{-1}=1+bU_b.
\]
The rightmost-letter decomposition of $A$ gives
\[
T_c=\mathscr F_A(b,c),\qquad U_b=\mathscr F_A(c,b).
\]
Consequently
\begin{equation}
\bigl(1+c\mathscr F_A(b,c)\bigr)B_A(b,c)
 =1+b\mathscr F_A(c,b).
\label{eq:formal-component-endpoint}
\end{equation}
This formal endpoint semi-holonomy identity uses only reversal.
If $A$ is group-like, the antipode identity further gives
$B_A(b,c)=A(-b,-c)$, recovering the endpoint formula used
in Subsection~\ref{subsec:pentagram-to-pentagon}.

After substitution of the residue generators, the rotation
and the reflection fixing $C_0$ act by
\[
\begin{aligned}
\tau(a,b,c,d,e)&=(c,d,e,a,b),\\
\varrho(a,b,c,d,e)&=(d,c,b,a,e),\qquad
\varrho=\tau\sigma.
\end{aligned}
\]
Put
\[
K_\Psi=\begin{pmatrix}1&a-d\\0&1\end{pmatrix},
\qquad
K_\Phi=\begin{pmatrix}1&e(a-d)\\0&1\end{pmatrix}.
\]
Direct substitution in
\eqref{eq:explicit-outer-gluing} gives
\begin{equation}
\begin{gathered}
K_\Psi\varrho(X)=XK_\Psi,\qquad
K_\Psi\varrho(Y)=YK_\Psi,\\
K_\Phi\varrho(c_{\mathrm{out}})
 =c_{\mathrm{out}}K_\Psi,\qquad
K_\Psi\varrho(v_{\mathrm{out}})
 =v_{\mathrm{out}}K_\Phi.
\end{gathered}
\label{eq:formal-side-intertwiners}
\end{equation}
For the second identity, the difference of the
upper-right entries is the negative of the cyclic
commutator relation in
\eqref{eq:five-residue-relations}; the other identities
follow from the same relation and the disjoint-pair
commutators.

Write the raw outer column as
\[
P_A=\begin{pmatrix}
1+e\mathscr F_A(X,Y)_{11}\\
\mathscr F_A(X,Y)_{21}
\end{pmatrix},\qquad
S_A=P_A\bigl(1+c\mathscr F_A(b,c)\bigr).
\]
The intertwining identities and $K_\Phi J=J$ imply,
coefficient by coefficient,
\[
P_A=K_\Phi\varrho(P_A).
\]
Since $\varrho$ exchanges $b,c$, the endpoint identity
\eqref{eq:formal-component-endpoint} now gives
\begin{equation}
S_A B_A(b,c)=K_\Phi\varrho(S_A).
\label{eq:formal-column-endpoint-general}
\end{equation}
For group-like $A$ this specializes to
\begin{equation}
S_A A(-b,-c)=K_\Phi\varrho(S_A).
\label{eq:formal-column-endpoint}
\end{equation}
If $A$ satisfies the pentagram coefficient relations,
then $\sigma(S_A)=S_A$, and consequently
$\varrho(S_A)=\tau(S_A)$.

Finally, the five matrices $\tau^i(K_\Phi)$ are upper
unipotent, and the sum of their upper-right entries is
\[
\begin{aligned}
&e(a-d)+b(c-a)+d(e-c)+a(b-e)+c(d-b)\\
&\qquad=[a,b]+[b,c]+[c,d]+[d,e]+[e,a]=0.
\end{aligned}
\]
Thus $\prod_{i=0}^4\tau^i(K_\Phi)=1$.

\subsection{The reflection change of frames}
\label{app:reflection-frames}

We prove Lemma~\ref{lem:universal-graded-change} by bounding
transition degrees independently of $A$, then determining
the entries from the first period terms.

\begin{lemma}
\label{lem:graded-frame-bound}
The universal logarithmic frames in
\eqref{eq:explicit-outer-gluing} identify the graded
de Rham space of vanishing cycles with
$V_{\mathrm{dR}}(-2)\oplus V_{\mathrm{dR}}$,
naturally in the coefficient object. In these frames,
the Hodge-compatible change between two projection
presentations, transported to cohomology by $\varphi_I^F$, has the form
\[
\begin{pmatrix}d_1&B\\0&d_2\end{pmatrix},
\qquad d_1,d_2\in\mathbb C^*,\qquad
B\in U(\mathfrak p_5)_2.
\]
Here $\mathfrak p_5$ is the complex Lie algebra with
presentation \eqref{eq:five-residue-relations}, with
generators of degree one. Under relabelling, the
coefficient action is first transported accordingly.
At the fixed corner the source transition is
$C_\sigma^W=\varepsilon\,1$, where $\varepsilon=-1$
is the orientation sign specified above.
\end{lemma}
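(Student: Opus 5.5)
The plan is to use the weight and Hodge gradings together with the naturality of the logarithmic frames in the coefficient object.

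\textbf{Step 1: the graded identification.} I will read this off Appendix~\ref{app:logarithmic-direct-image}. There the relative logarithmic complex \eqref{eq:relative-logarithmic-fiber-complex} has differentials of total Tate degree zero. The quotient map $[(z,q,v)]\mapsto(v+ez,q)$ is graded and is natural in the coefficient object. So it identifies $\operatorname{gr}\Phi_{\mathrm{dR}}$ with $V_{\mathrm{dR}}(-2)\oplus V_{\mathrm{dR}}$, functorially in $V$.

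\textbf{Step 2: shape of the transition.} The change between the frames of two projection presentations is a composite of two maps. The first is $(\varphi_I^F)^{-1}\varphi_I'^F$, or the relabelled analogue after transporting the coefficient action. The second is the graded identification, which preserves $F$ and $W$. The composite is therefore a filtered isomorphism compatible with the Hodge splittings. Because it respects both filtrations, it induces an isomorphism on $\operatorname{gr}^W$ and preserves the Hodge splitting. Hence it is block upper triangular with respect to the decomposition by $V(-2)$ and $V$.

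Naturality in the universal coefficient object forces each entry to lie in $U(\mathfrak p_5)$, acting through the residues. The entry of a block mapping $V(-k)\to V(-l)$ has residue degree $l-k$, since residues have Tate degree one. This gives three conclusions:
\begin{itemize}
\item the diagonal entries have degree $0$, so they are scalars $d_1,d_2$;
\item the lower-left entry would need degree $-2$, so it vanishes;
\item the upper-right entry $B$ has degree $2$.
\end{itemize}
The scalars are nonzero because the map is invertible.

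\textbf{Step 3: the source transition $C_\sigma^W$.} The corner space $W_0=V^{\mathrm{tan}}_{q_0}\otimes\mathfrak o_0$ has a single graded piece. So the same degree argument gives a scalar. The tangential fiber at $q_0$ is fixed by $\sigma$, since $\sigma$ fixes $q_0$ and exchanges the two tangent directions only as a pair. Its identification is therefore the identity on $V^{\mathrm{tan}}$. The orientation line contributes the sign $-1$ from exchanging the two degree-one support maps, as recorded in the corner definition. Hence $C_\sigma^W=\varepsilon=-1$.

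\textbf{Main obstacle.} The delicate step is justifying that every frame transition entry is a universal noncommutative polynomial in the residues, that is, an element of $U(\mathfrak p_5)$. I will argue this by working in the completed universal coefficient object, the regular representation of $U(\mathfrak p_5)$, where all comparison maps are module maps over the residue action and hence given by right multiplication. Finite quotients then specialize.

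I also need to check carefully that the Hodge splitting makes the transition strictly triangular rather than merely filtered. For this I will use that both frames are $F$-adapted, so on weight $2a$ each frame sits inside $F^a\cap W_{2a}$.
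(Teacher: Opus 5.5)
Your outline follows the paper's proof in structure: the graded identification comes from the relative logarithmic complex; the transition, conjugated through $\varphi_I^F$, preserves $F$ and $W$ and hence the Hodge grading; the degree count forces scalar diagonal entries, a vanishing lower-left entry and $B$ of residue degree two; and the scalar source transition is fixed on the augmentation by the orientation sign.

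The gap is in the step you flag as the main obstacle, and the mechanism you propose there is inverted. The frame transitions are not module maps for the residue action. If they were, on the regular module they would be multiplications on the \emph{other} side, and those do not transfer to an arbitrary coefficient object at all. In the paper's convention the residues act on $R_N=U(\mathfrak p_5)/U(\mathfrak p_5)_{>N}$ by left multiplication. The maps the transition must commute with are the right multiplications by homogeneous $h$ of degree $k$; these are horizontal maps $R_N\to R_N(-k)$ preserving $F$ and $W$. Commuting with all of them forces each entry to be left multiplication by its value at $1$. That is, each entry is an element of $\widehat U(\mathfrak p_5)$ acting through the residues on every $V$.

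Two ingredients needed for this step are also missing.
\begin{itemize}
\item Right multiplication need not be a morphism of rational variations, so ``naturality in the coefficient object'' within the category of admissible variations does not cover it. You must argue directly that outer shrinking is natural for the underlying complex local-system map. You must also show that the Hodge splittings $s^F$, and hence $\varphi_I^F$ and the transition, are natural for complex maps preserving $F$ and $W$.
\item The truncations $R_N$ must actually be coefficient objects of the required type, carrying the filtrations you use. The paper obtains them as duals of length-$\le N$ iterated integrals, i.e.\ finite quotients of the canonical unipotent path variation, which are admissible by Hain--Zucker. It identifies them with $R_N$ by the logarithmic bar model, and it notes that these truncations separate the completed algebra.
\end{itemize}
With these replacements, your degree argument and your sign argument at $q_0$ go through as in the paper.
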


\begin{proof}
Use the total Tate grading, including the shifts on
the component terms. The relative complex
\eqref{eq:relative-logarithmic-fiber-complex} and its
gauge in Appendix~\ref{app:logarithmic-direct-image} give
\[
\Psi_{\mathrm{dR}}=V(-1)\oplus V,\qquad
\Phi_{\mathrm{dR}}=V(-2)\oplus V
\]
with both the Hodge and weight filtrations. In particular,
\[
F^p\Phi_{\mathrm{dR}}=F^{p-2}V\oplus F^pV,\qquad
W_m\Phi_{\mathrm{dR}}=W_{m-4}V\oplus W_mV.
\]
Thus these are homogeneous coordinates for the Hodge
grading. The identity in the lower-left entry of $X$
has total Tate degree one because the two nearby
coordinates have different shifts.

The finite regular modules used below have rational Hodge realizations.
Fix an ordinary base point $x_\circ\in U(\Q)$ and take the duals of the
length-$\leq N$ homotopy-invariant iterated integrals from $x_\circ$.
These are finite quotients of the canonical unipotent path variation
of \cite[Sections~6--7]{CarlsonHain}, and are admissible rational
variations by \cite{HainZucker}.
The logarithmic bar model for $\M{5}$ identifies their filtered
de Rham realizations with
\[
 R_N=U(\mathfrak p_5)/U(\mathfrak p_5)_{>N},
 \qquad \nabla=d-\Omega.
\]
Here $\Omega$ is the universal connection of
Appendix~\ref{app:logarithmic-direct-image}; its residues act by left
multiplication. The filtrations are
\[
W_{-2m}R_N=\bigoplus_{k\ge m}(R_N)_k,\qquad
F^pR_N=\bigoplus_{k\le-p}(R_N)_k,
\]
where $0\le k\le N$. Degree $k$ has Tate type $\Q(k)$,
dual to bar length $k$ of type $(k,k)$; see
\cite[Sections~6.1--6.3]{BrownModuli}.
The forms $d\log x,d\log(x-1),d\log y,d\log(y-1),d\log(x-y)$
use functions with simple boundary zeros and poles. Thus these
quotients are coefficient objects of Subsection~\ref{subsec:iterated-kummer}.
Their corner fibers are iterated unipotent limits; admissibility
provides the rational limiting structures, and the logarithmic
frames are regularized as in \cite[Theorem~6.12]{BrownModuli}.
These truncations separate the completed algebra. For a
homogeneous element $h$ of degree $k$, right
multiplication is a horizontal map
$R_N\to R_N(-k)$ preserving $F$ and $W$.
It acts on the filtered logarithmic complexes for direct
images and nearby cycles, preserving both filtrations on
cohomology. Shrinking is natural for the underlying complex
local-system map, and $s^F$ is natural for maps preserving $F,W$.
Thus $\varphi_I^F$ and the transition commute with these
maps, Tate twists, and truncation. Each matrix entry is
therefore left multiplication by an element of
$\widehat U(\mathfrak p_5)$, determined by its value at
$1$. This uses the complex de Rham realization, without
requiring right multiplication to be rational as a Hodge
morphism.

The transition is obtained by conjugating the cohomological
relabelling map by the Hodge-compatible maps $\varphi_I^F$.
It therefore preserves both $F$ and $W$, and hence the
Hodge grading. Preservation of the total grading forces the entry
degrees to be
\[
\begin{pmatrix}0&2\\-2&0\end{pmatrix}.
\]
There are no negative degrees, and degree zero consists
of scalars. Invertibility makes both diagonal scalars
nonzero. The same argument for the corner source
$V_{\mathrm{dR}}(-2)$ gives a degree-zero scalar transition.
In the chart $(0,\infty,1,x,y)$ the reflection is
$\sigma(x,y)=(1-y,1-x)$. At $q_0=(1,0)$ it exchanges
the inward normal coordinates $1-x,y$ without rescaling.
Thus on the augmentation it is the identity on coefficients
times the normal-orientation sign $\varepsilon$.
This proves the source assertion as well.
\end{proof}

\begin{proof}[Proof of Lemma~\ref{lem:universal-graded-change}]
The geometric Theorem~\ref{thm:pentagram-geometric} gives
the framed identity \eqref{eq:framed-pentagram-relation}.
Retain the Hodge convention
\[
G_\nu=R_\nu(A(-e_0,-e_1)_Y)U(c_\nu v_\nu),
\qquad \widetilde A=A(-e_0,-e_1).
\]
Since
$(c_{\mathrm{out}}v_{\mathrm{out}})J=Je$, we have
\[
U(c_{\mathrm{out}}v_{\mathrm{out}})J=JU(e).
\]
The residue $e$ commutes with $b,c$. Hence the full
Hodge column is
\[
S_{\widetilde A}\,U(e)U(c).
\]
The right factor is invertible and invariant under
$\sigma$, because $ec=ce$. It cancels from
\eqref{eq:framed-pentagram-relation}.

By Lemma~\ref{lem:graded-frame-bound}, the remaining
target transition is upper triangular, with scalar
diagonal and an off-diagonal entry of residue degree two.
At zero residues the column is
\[
\binom{1}{-\zeta(2)}.
\]
Both entries are nonzero. Comparing with the reflected
column fixes the diagonal entries as the corner-source
orientation sign. After cancelling that common sign,
\eqref{eq:framed-pentagram-relation} therefore reads
\[
S_{\widetilde A}
 =\begin{pmatrix}1&B_\sigma\\0&1\end{pmatrix}
   \sigma S_{\widetilde A},
\qquad B_\sigma\in U(\mathfrak p_5)_2.
\]

The first nonconstant upper term is
\[
S_{\widetilde A}^{(1)}
 =1-\zeta(2)(ea+cb)+O(\deg\ge3),
\]
whereas $S_{\widetilde A}^{(2)}=-\zeta(2)+O(\deg\ge1)$.
The quadratic expression $ea+cb$ is fixed by $\sigma$.
Taking residue degree two in the upper equation gives
\[
0=-\zeta(2)B_\sigma.
\]
Thus $B_\sigma=0$. Both frame transitions are therefore
the scalar orientation sign $\varepsilon$, as asserted.

\end{proof}

\section{Coefficient equations}
\label{app:coefficient-calculations}

\subsection{Coefficient extraction and gradings}

We calculate the two block equations separately, using the
residue algebra, $X,Y$, and
$\sigma(a,b,c,d,e)=(b,a,e,d,c)$ from
\eqref{eq:explicit-pentagram-column}.
Write $A=\sum_w A_w w$. For a word $w=ue_1$, put
\[
\begin{aligned}
L_w^{(1)}&=e\,\rev(u)(X,Y)_{11},\qquad
L_w^{(2)}=\rev(u)(X,Y)_{21},\\
C_w&=c\,\rev(u)(b,c).
\end{aligned}
\]
Expanding the two factors in the column gives
\begin{equation}
\begin{aligned}
S_A^{(1)}
 &=1+\sum_w A_w\bigl(L_w^{(1)}+C_w\bigr)
       +\sum_{u,v}A_uA_vL_u^{(1)}C_v,\\
S_A^{(2)}
 &=\sum_w A_wL_w^{(2)}
       +\sum_{u,v}A_uA_vL_u^{(2)}C_v.
\end{aligned}
\label{eq:mzv-coefficient-extraction}
\end{equation}
All indices are words ending in $e_1$. Reduce residue words
modulo \eqref{eq:five-residue-relations} and equate coefficients
of $S_A^{(k)}-\sigma S_A^{(k)}$ to zero for $k=1,2$.
The equations are rational linear combinations of $A_w$
and $A_uA_v$. Anti-invariance refers to $\sigma$, not
to parity of word length.

For the period associator, a convergent word has coefficient
\[
A_{e_0^{n_1-1}e_1\cdots e_0^{n_k-1}e_1}
 =(-1)^k\zeta(n_1,\ldots,n_k),\qquad n_1\ge2.
\]
The other coefficients are determined by shuffle
regularization. We write
$z_{n_1\cdots n_k}=\zeta(n_1,\ldots,n_k)$.
Associator weight is word length, with additive weight
on products. It differs from residue degree in the
lower block:
\[
\begin{array}{c|c|c}
\text{block}&\text{associator weight}&\text{residue degree}\\\hline
S_A^{(1)}\ \text{(upper)}&n&n\\
S_A^{(2)}\ \text{(lower)}&n&n-2
\end{array}
\]
Thus the constant lower term has weight two. We denote
the part of associator weight $n$ by $S_{A,[n]}^{(k)}$;
the corresponding kernel
$\mathscr F_{A,[n-1]}(u,v)$ has word degree $n-1$.

For the coefficient reductions below, order residue
words lexicographically with $a<b<c<d<e$. In each degree,
row-reduce all left and right multiples of the six
quadratic relations in \eqref{eq:five-residue-relations}.
The nonpivot words give the quotient coordinates used
in the tables. The quotient dimensions in residue
degrees $2,3,4,5$ are $19,65,211,665$, respectively.
Convergent MZVs and their products remain separate symbols,
except where a lower-weight relation is explicitly used.
We use shuffle only to regularize the remaining coefficients;
the displayed ranks are not taken modulo all shuffle and
reversal relations.

\subsection{Weights two and three}

The first upper terms are
\[
\begin{aligned}
S_A^{(1)}={}&1-z_2(ea+cb)-z_3(ea^2+cb^2)\\
&+z_{21}\bigl(e^2a+c^2b-2eae-2cbc
                  +ec(d-a-b)\bigr)+\cdots,
\end{aligned}
\]
whereas $S_{A,[2]}^{(2)}=-z_2$.
The displayed upper expressions are invariant under
$\sigma$, using $ec=ce$, as is the lower weight-two term.
Thus neither block gives a weight-two equation; the upper
block gives none in weight three, even without $z_{21}=z_3$.

For the lower weight-three term, shuffle regularization
gives
\[
\mathscr F_{A,[2]}(u,v)
 =-z_3u^2+z_{21}(vu-2uv).
\]
Substitution in \eqref{eq:mzv-coefficient-extraction}
yields
\[
S_{A,[3]}^{(2)}=-z_3(a+d)-z_{21}(b+c+e).
\]
Its reflection difference is $(z_{21}-z_3)(a-b)$.
Since $a-b\ne0$ in residue degree one, the lower block
gives the Euler identity
\begin{equation}
\zeta(2,1)=\zeta(3).
\label{eq:pentagram-weight-three}
\end{equation}

\subsection{Weight four}

Put
\[
z_4=\zeta(4),\quad z_{31}=\zeta(3,1),\quad
z_{22}=\zeta(2,2),\quad z_{211}=\zeta(2,1,1),
\qquad p=\zeta(2)^2.
\]
Shuffle regularization gives
\begin{equation}
\begin{split}
\mathscr F_{A,[3]}(u,v)={}&-z_4u^3+z_{31}vu^2
 +z_{22}uvu-(z_{22}+2z_{31})u^2v\\
 &-z_{211}v^2u+3z_{211}vuv-3z_{211}uv^2.
\end{split}
\label{eq:unreduced-weight-four-kernel}
\end{equation}
The mixed products in the upper and lower blocks are
$p\,eacb$ and $p\,cb$, respectively.

The upper block has residue degree four. Its reduction
uses all $450$ relation multiples of total degree four;
their rank is $414$. The lower block has residue degree
two. The following coefficients of the two reflection
differences span their respective equations:
\[
\begin{array}{c|c|l}
\text{block}&\text{residue word}&\text{coefficient}\\\hline
S_A^{(1)}&ecaa&z_{211}-z_{31}-z_{22}\\
S_A^{(1)}&ecae&p+z_{31}-z_{22}-2z_{211}\\\hline
S_A^{(2)}&a^2&z_{211}-z_4\\
S_A^{(2)}&ae&z_{211}-z_{31}-z_{22}\\
S_A^{(2)}&cd&z_{22}-3z_{31}\\
S_A^{(2)}&cb&p-2z_{22}-z_{211}
\end{array}
\]
Thus the upper block gives
\begin{equation}
z_{211}=z_{31}+z_{22},\qquad p=2z_{22}+z_{211}.
\label{eq:upper-pentagram-weight-four}
\end{equation}
The lower block gives these equations and, in addition,
\[
z_{211}=z_4,\qquad z_{22}=3z_{31}.
\]
The second upper row is the last lower row minus the second lower row,
so the lower system visibly contains the upper one.

The resulting weight-four relations are
\begin{equation}
\begin{gathered}
\zeta(2,1,1)=\zeta(4),\qquad
\zeta(3,1)=\tfrac14\zeta(4),\qquad
\zeta(2,2)=\tfrac34\zeta(4),\\
\zeta(2)^2=2\zeta(2,2)+\zeta(4)=\tfrac52\zeta(4).
\end{gathered}
\label{eq:pentagram-weight-four}
\end{equation}

\subsection{Weight five}

Keep all eight convergent MZVs of weight five separate.
The corresponding coefficients of $A$ are
\[
\begin{array}{c|rrrrrrrr}
w&00001&00011&00101&00111&01001&01011&01101&01111\\\hline
A_w&-z_5&z_{41}&z_{32}&-z_{311}&z_{23}&-z_{221}&-z_{212}&z_{2111}
\end{array}
\]
where $0,1$ denote $e_0,e_1$. The other coefficients
ending in $1$ follow by shuffling a leading $1$, whose
regularized integral is zero.

For the upper block, initially keep
\[
p_{2,3}=\zeta(2)\zeta(3),\qquad
p_{2,21}=\zeta(2)\zeta(2,1)
\]
separate. The $3000$ relation multiples of residue
degree five have rank $2460$. The upper reflection
difference gives seven independent coefficient
equations:
\begin{equation}
\begin{gathered}
p_{2,3}=p_{2,21},\\
z_{41}=z_{311}=2z_{2111}-p_{2,3},\\
z_{32}=z_{221}=-\tfrac{11}{2}z_{2111}+3p_{2,3},\\
z_{23}=z_{212}=\tfrac92z_{2111}-2p_{2,3}.
\end{gathered}
\label{eq:upper-pentagram-weight-five}
\end{equation}
The coefficient $z_5$ does not occur in these equations.

For the lower block, use the already established
weight-three relation \eqref{eq:pentagram-weight-three}
and put $p_{23}=p_{2,3}=p_{2,21}$.
The mixed product from weights two and three contributes
\[
p_{23}\bigl(c(b^2+2bc-cb)+(a+b+c+d+e)cb\bigr).
\]
This block has residue degree three. The sixty cubic
relations obtained by multiplying each quadratic
relation on the left and on the right by each generator
are independent, leaving a quotient of dimension $65$.
The following seven coefficients of the lower
reflection difference span its equations:
\[
\begin{array}{c|l}
\text{residue word}&\text{coefficient in }S_A^{(2)}-\sigma S_A^{(2)}\\\hline
a^3&-z_5+z_{2111}\\
ba^2&-z_{41}+z_{311}\\
aea&z_{23}+z_{221}+z_{2111}-p_{23}\\
b^2c&z_{41}+z_{32}+z_{23}-z_{2111}\\
bc^2&z_{311}+z_{221}+z_{212}-z_{2111}\\
dc^2&5z_{311}+z_{221}-z_{212}\\
cda&-z_{41}-4z_{311}+z_{23}-z_{221}
\end{array}
\]
Solving gives
\begin{equation}
\begin{aligned}
\zeta(2,1,1,1)&=\zeta(5),\\
\zeta(4,1)=\zeta(3,1,1)
 &=2\zeta(5)-\zeta(2)\zeta(3),\\
\zeta(3,2)=\zeta(2,2,1)
 &=3\zeta(2)\zeta(3)-\tfrac{11}{2}\zeta(5),\\
\zeta(2,3)=\zeta(2,1,2)
 &=\tfrac92\zeta(5)-2\zeta(2)\zeta(3).
\end{aligned}
\label{eq:pentagram-weight-five}
\end{equation}
After identifying the two products in lower weight, the
lower block adds $z_{2111}=z_5$ to
\eqref{eq:upper-pentagram-weight-five}. These MZV reductions
follow from the coefficient equations; they were not used
to form them. No independence of $\zeta(5)$ and
$\zeta(2)\zeta(3)$ is assumed.

\subsection{A Maurer--Cartan presentation}
\label{app:maurer-cartan}

We consider the two reflection equations for arbitrary series
with constant term $1$ and zero linear terms. Let $R$ be the
graded rational residue algebra
\eqref{eq:five-residue-relations}, and put
\[
R_n^-=\{r\in R_n:\sigma(r)=-r\}.
\]
Define a weight-completed graded vector space by
\[
\mathfrak l^1=\prod_{n\ge2}\bigoplus_{|w|=n}\Q x_w,
\qquad
\mathfrak l^2=\prod_{n\ge2}(R_n^-\oplus R_{n-2}^-),
\qquad
\mathfrak l^j=0\quad(j\ne1,2).
\]
Here $x_w$ is labelled by a binary word, and the two entries
in the $n$-th factor of $\mathfrak l^2$ both have weight $n$.
For $w$ ending in $e_1$, use the expressions in
\eqref{eq:mzv-coefficient-extraction} to put
\[
p(x_w)=\binom{L_w^{(1)}}{L_w^{(2)}},\qquad q(x_w)=C_w;
\]
set both expressions to zero for words ending in $e_0$.
Extend $p,q$ linearly and continuously, and put $J=\binom10$.
For $\alpha,\beta\in\mathfrak l^1$, define
\[
\begin{aligned}
\partial\alpha&=(1-\sigma)\bigl(p(\alpha)+Jq(\alpha)\bigr),\\
[\alpha,\beta]&=(1-\sigma)\bigl(
p(\alpha)q(\beta)+p(\beta)q(\alpha)\bigr).
\end{aligned}
\]
Declare $\mathfrak l^2$ central and set
$\partial\mathfrak l^2=0$.

\begin{prop}\label{prop:pentagram-maurer-cartan}
These operations make $\mathfrak l$ a complete differential
graded Lie algebra. For every commutative $\Q$-algebra $B$,
write
\[
A=1+\sum_{|w|\ge2}A_ww,
\qquad
\alpha_A=\sum_{|w|\ge2}A_wx_w
 \in\mathfrak l^1\widehat\otimes B.
\]
Then
\[
\partial\alpha_A+\frac12[\alpha_A,\alpha_A]
 =S_A-\sigma(S_A).
\]
Thus the two pentagram reflection equations are precisely
the Maurer--Cartan equation in $\mathfrak l$.
\end{prop}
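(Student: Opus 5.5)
The proof is a direct algebraic verification; the geometry plays no role. Set $P(\alpha)=p(\alpha)+Jq(\alpha)$ and $M(\alpha,\beta)=p(\alpha)q(\beta)$; both are $2\times r$ column-valued and continuous in the weight topology. Since $(1-\sigma)$ is a projector onto $\sigma$-anti-invariants up to the factor $2$, the brackets land in the anti-invariant parts $R_n^-\oplus R_{n-2}^-$. I will first check well-definedness and weights. For a word $w$ of length $n$ ending in $e_1$, the expression $L_w^{(1)}$ has residue degree $n$ and $L_w^{(2)}$ has residue degree $n-2$, by the degree count in Proposition~\ref{prop:explicit-pentagram-column} (an $(i,j)$ entry of a length-$(n-1)$ word has degree $n-1+j-i$, with the extra $e$ in the upper row). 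The term $C_w$ has degree $n$. For a product the weights add, and the degrees of $p(\alpha)q(\beta)$ in each block are shifted by the degree of $q(\beta)$. So $\partial$ and $[\ ,\ ]$ are homogeneous of weight zero. They extend continuously to the products, and only finitely many terms contribute in each weight, because all words have weight at least $2$.

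The Lie axioms are then formal. Bilinearity and symmetry of $[\alpha,\beta]$ on $\mathfrak l^1$ are built into the definition, and symmetry is the correct graded antisymmetry for odd-degree elements. The Jacobi identity is automatic: a triple bracket of degree-one elements lands in $\mathfrak l^3=0$, and $\mathfrak l^2$ is central. Compatibility of $\partial$ with the bracket requires $\partial[\alpha,\beta]=[\partial\alpha,\beta]-[\alpha,\partial\beta]$; the left side is $0$ since $\partial\mathfrak l^2=0$, and the right side is $0$ since $\mathfrak l^2$ is central. Finally $\partial^2=0$ holds trivially, because $\partial$ vanishes on $\mathfrak l^2$. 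Completeness follows from the weight filtration $\prod_{n\ge N}$, which is preserved by both operations and is exhaustive and separated. The only point needing care is that $\mathfrak l^1$, $\mathfrak l^2$ and the operations are compatible with the filtration, which the weight homogeneity above gives.

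For the Maurer--Cartan identity, I will use \eqref{eq:mzv-coefficient-extraction}, which I read as the exact expansion of $S_A=P_A(1+c\mathscr F_A(b,c))$. By definition, $\sum_wA_wL_w^{(1)}$ and $\sum_w A_wL_w^{(2)}$ form $p(\alpha_A)$, and $\sum_wA_wC_w=q(\alpha_A)$. The expansion therefore reads
\[
S_A=J+p(\alpha_A)+Jq(\alpha_A)+p(\alpha_A)q(\alpha_A).
\]
Words ending in $e_0$ contribute nothing, which matches the convention $p=q=0$ on them. Since $J$ is $\sigma$-invariant, applying $1-\sigma$ gives $\partial\alpha_A+(1-\sigma)p(\alpha_A)q(\alpha_A)$. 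This equals $\partial\alpha_A+\tfrac12[\alpha_A,\alpha_A]$, because $[\alpha,\alpha]=2(1-\sigma)p(\alpha)q(\alpha)$. The sums are read with coefficients in $B$ by continuous extension of scalars.

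The Maurer--Cartan equation therefore vanishes exactly when $S_A=\sigma S_A$ in both blocks, that is, when \eqref{eq:upper-pentagram-relations} and \eqref{eq:lower-pentagram-relations} both hold. The main obstacle I expect is bookkeeping rather than substance: confirming that the quadratic term has no ordering asymmetry (only $p(\alpha)q(\alpha)$ occurs, never $q\,p$), and that the block degrees assigned in $\mathfrak l^2$ match, so that $(1-\sigma)$ of each block genuinely lies in $R_n^-$, respectively $R_{n-2}^-$.
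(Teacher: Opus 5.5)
Your proposal is correct and follows the paper's own argument. The dgLa axioms are formal because $\mathfrak l$ is concentrated in degrees $1,2$, with $\mathfrak l^2$ central and $\partial\mathfrak l^2=0$; continuity comes from additivity of weights; and the Maurer--Cartan identity is the factorization $S_A=(J+p(\alpha_A))(1+q(\alpha_A))$ together with $\sigma(J)=J$ and $[\alpha,\alpha]=2(1-\sigma)p(\alpha)q(\alpha)$. Your check that $\partial x_w$ and the bracket land blockwise in $R_n^-\oplus R_{n-2}^-$ is a detail the paper leaves implicit; note only that the columns are $2\times1$ with entries in $R$, not $2\times r$.
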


\begin{proof}
The bracket of degree-one elements is symmetric, as required
by graded skew-symmetry. Its values and those of the differential
are central in degree two. Hence the Jacobi and derivation
identities hold, and $\partial^2=0$. The differential preserves
weight and the bracket adds weights, so the operations are
continuous. Finally,
\[
S_A=(J+p(\alpha_A))(1+q(\alpha_A)).
\]
Since $\sigma(J)=J$, its reflection difference is
\[
(1-\sigma)\bigl(p(\alpha_A)+Jq(\alpha_A)
                         +p(\alpha_A)q(\alpha_A)\bigr),
\]
which is the displayed Maurer--Cartan expression.
\end{proof}

\begin{rem}
This presentation encodes the two reflection equations themselves;
pure-power normalization and reversal are the additional conditions
of Subsection~\ref{subsec:shuffle}. In particular, the generators
$x_w$ for words ending in $e_0$ are closed and central, since
these coefficients do not occur in the reflection equations.
\end{rem}

\bibliographystyle{alpha}
\bibliography{hodge.bib}
\end{document}